\title[A Thurston boundary for Teichm\"uller spaces]{A Thurston boundary for infinite-dimensional Teichm\"uller spaces}
\author{Francis Bonahon} 
\address{Department of Mathematics, University of Southern California, 3620 S. Vermont Avenue, Los Angeles CA 90089-2532}
\email{fbonahon@usc.edu}
\author{Dragomir \v Sari\' c}
\address{Department of Mathematics, Queens College of the City University of New York, 65--30 Kissena Blvd., Flushing, NY 11367}
\address{Mathematics PhD. Program, Graduate Center of the City University of New York, 365 Fifth Avenue, New York, NY 10016-4309}
\email{Dragomir.Saric@qc.cuny.edu}
\thanks{This research was partially supported by the grants DMS-1102440, DMS-1406559 and DMS-1711297 from the US National Science Foundation, and by the collaboration grant  346391 from the Simons Foundation.}
\newtheorem{thm}{Theorem}
\newtheorem{prop}[thm]{Proposition}
\newtheorem{cor}[thm]{Corollary}
\newtheorem{lem}[thm]{Lemma}
\theoremstyle{definition}
\newtheorem{rem}[thm]{Remark}
\theoremstyle{plain}
\newcommand{\D}{\mathbb D}
\newcommand{\C}{\mathbb C}
\newcommand{\N}{\mathbb N}
\newcommand{\R}{\mathbb R}
\newcommand{\HH}{\mathbb H}
\newcommand{\CC}{\mathcal  C_{\mathrm{bd}}}
\newcommand{\T}{\mathcal  T}
\newcommand{\LL}{\mathbf  L}
\newcommand{\PL}{\mathbf  {PL}}
\newcommand{\HHH}{\mathbf H}
\newcommand{\I}{\mathrm i}
\newcommand{\E}{\mathrm e}
\newcommand{\ML}{\mathcal  {ML}_{\mathrm{bd}}}
\newcommand{\PML}{\mathcal  {PML}_{\mathrm{bd}}}
\newcommand{\PCC}{\mathcal  {PC}_{\mathrm{bd}}}  
\newcommand{\dT}{d_{\mathrm T}}
\newcommand{\MCG}{\mathbf  {MCG}_{\mathrm{qc}}}
\renewcommand{\leq}{\leqslant}
\renewcommand{\geq}{\geqslant}
\renewcommand{\epsilon}{\varepsilon}
\renewcommand{\phi}{\varphi}
\subjclass{}
\keywords{}
\date{\today}
\begin{document}
\maketitle
\begin{abstract}
For a compact surface $X_0$, Thurston introduced a compactification of its Teich\-m\"uller space $\T(X_0)$ by completing it with a boundary $\mathcal{PML}(X_0)$ consisting of projective measured geodesic laminations. We introduce a similar bordification for the Teichm\"uller space $\T(X_0)$ of a noncompact Riemann surface $X_0$, using the technical tool of geodesic currents. The lack of compactness requires the introduction of certain uniformity conditions which were unnecessary for compact surfaces. A technical step, providing a convergence result for earthquake paths in $\T(X_0)$, may be of independent interest. 
\end{abstract}

The Teichm\"uller space of a Riemann surface $X_0$ is the space of quasiconformal deformations of the complex structure of $X_0$. When $X_0$ is compact of genus at least 2, W.P.~Thurston famously introduced a compactification of $\T(X_0)$ by adding a boundary at infinity consisting of projective measured foliations \cite{Thu2, FLP, FLP2} or, equivalently, projective measured geodesic laminations \cite{Thu0, Bo2}. In this paper, we introduce a similar construction of a boundary for the Teichm\"uller space of a noncompact surface $X_0$. In addition to the fact that Teichm\"uller spaces of noncompact Riemann surfaces are fundamental objects in complex analysis, our motivation here is to put in evidence the hidden features that underlie Thurston's construction, by tying it more closely to the quasiconformal geometry of $X_0$ and less to the purely topological considerations that suffice for compact surfaces. 

Like Thurston, we restrict attention to Riemann surfaces $X_0$ that are conformally hyperbolic, in the sense that the conformal structure of $X_0$ can be realized by a complete hyperbolic metric. This is equivalent to the property that the universal cover $\widetilde X_0$ is biholomorphically equivalent to the unit disk $\D\subset \C$. This condition only excludes the cases where $X_0$ is an elliptic surface, diffeomorphic to the torus, or is the Riemann sphere minus 0, 1 or 2 points. A case of particular interest is that of the disk $\D$, in which case the Teichm\"uller space $\T(\D)$ is Bers's \emph{Universal Teichm\"uller Space} \cite{Be}. 

Thurston's original length spectrum approach \cite{Thu2, FLP} is not available here, and we follow the strategy introduced in \cite{Bo2} by embedding the Teichm\"uller space $\T(X_0)$ in the space $\mathcal C(X_0)$ of geodesic currents. These are defined as those measures on the space $G(\widetilde X_0)$ of Poincar\'e geodesics of the universal cover $\widetilde X_0$ which are invariant under the action of the fundamental group $\pi_1(X_0)$. When $X_0$ is compact, these are purely topological objects, which were introduced in \cite{Bo1} as a completion of the set of free homotopy classes of closed curves on the surface; in fact, geodesic currents can be described  \cite{Bo3} solely in terms of the algebraic structure of  $\pi_1(X_0)$. The definition of geodesic currents  was motivated by Thurston's definition of measured foliations and measured geodesic laminations, introduced  as a way to complete the set of isotopy classes of simple closed curves on the surface \cite{Thu0, Thu1, FLP, FLP2}. The topological nature of geodesic currents and measured geodesic laminations becomes much weaker for noncompact surfaces, and this requires the consideration of uniformity conditions which were taken for granted in the compact case. 

More precisely, if $X_0$ is a conformally hyperbolic Riemann surface and if its universal cover $\widetilde X_0$ is endowed with the Poincar\'e metric, the space $G(\widetilde X_0)$ of complete geodesics of $\widetilde X_0$ comes with a preferred measure, the \emph{Liouville measure} $L_{\widetilde X_0}$. If we have a quasiconformal deformation of the complex structure of $X_0$, represented by a quasiconformal diffeomorphism $f\colon X_0 \to X$ from $X_0$ to another Riemann surface $X$, we can then use $f$ to pull back the Liouville measure $L_{\widetilde X}$ of $G(\widetilde X)$ to a $\pi_1(X_0)$--invariant measure on $G(\widetilde X_0)$, namely to a geodesic current in $X_0$. 

This enables us to define what we call the \emph{Liouville embedding}
$$
\LL \colon \T(X_0) \to \mathcal C(X_0)
$$
of the Teichm\"uller space, which associates the Liouville current $L_f$ to each element $[f]\in \T(X_0)$ represented by a quasiconformal diffeomorphism $f \colon X_0 \to X$. 

There is nothing new so far. But a challenge arises when the surface $X_0$ is noncompact: Find a ``good'' topology on the space $\mathcal C(X_0)$ of geodesic currents for which the Liouville embedding $\LL$ is really a topological embedding, namely restricts to a homeomorphism $ \T(X_0) \to \LL\big( \mathcal T(X_0) \big)$. The natural topology on $\T(X_0)$ is the Teichm\"uller topology, defined by the Teichm\"uller metric; see \S \ref{sect:Teichmueller}. As a space of measures, $\mathcal C(X_0)$ is traditionally endowed with the weak* topology (see \S \ref{sect:GeodesicCurr}). However, this topology fails to take into account the many symmetries of the universal cover $\widetilde X_0$ coming from the  group $\HHH(\widetilde X_0)\cong \mathrm{PSL}_2(\R)$ of all biholomorphic maps of $\widetilde X_0$. 

This leads us to restrict attention to \emph{bounded} geodesic currents, which satisfy a certain boundedness property with respect to the action of $\HHH(\widetilde X_0)$, and to introduce the \emph{uniform weak* topology} on the space $\CC(X_0)$ of bounded geodesic currents. See \S \ref{sect:GeodesicCurr} for precise definitions. When the surface $X_0$ is compact, every geodesic current is bounded and the uniform weak* topology coincides with the usual weak* topology on $\mathcal C(X_0) = \CC(X_0)$ (Proposition~\ref{prop:UniformNonUniformWhenCompact}). See \cite{Sar4, Sar5, Otal, MiSa} for earlier (and slightly different) incarnations of the uniform weak* topology. 

\begin{thm}
\label{thm:LiouvilleEmbeddingIntro}
 The Liouville embedding $\LL \colon \T(X_0) \to \mathcal C(X_0)$ is valued in the space $\CC(X_0)$ of bounded geodesic currents, and restricts to a homeomorphism $\T(X_0) \to \LL \big( \T(X_0) \big) \subset \CC(X_0)$ when $\CC(X_0)$ is endowed with the uniform weak* topology. In addition, the image $ \LL \big( \T(X_0) \big) $ is closed in $ \CC(X_0)$, and the embedding $\LL \colon \T(X_0) \to \CC(X_0)$ is proper. 
\end{thm}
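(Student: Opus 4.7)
My strategy is to analyze $\LL$ through the boundary extensions of quasiconformal deformations. Any $[f]\in\T(X_0)$ lifts to a quasiconformal self-map $\tilde f\colon\D\to\D$, unique up to pre-composition by a deck transformation of $\pi_1(X_0)$ and post-composition by a M\"obius transformation, and extends continuously to a quasisymmetric homeomorphism $\tilde f|_{\partial\D}$ of $\partial\D$. The Liouville current $L_f$ is essentially the pull-back of the Liouville measure $L_\D$ by $\tilde f|_{\partial\D}\times\tilde f|_{\partial\D}$, and $L_\D$ has the explicit cross-ratio expression $dL_\D(a,b)=|a-b|^{-2}\,|da\,db|$. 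All four assertions of the theorem translate into properties of these boundary maps.

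First I would verify that $L_f\in\CC(X_0)$: Beurling--Ahlfors bounds on quasisymmetric maps show that $\tilde f|_{\partial\D}$ distorts the $L_\D$-measure of a geodesic box by a multiplicative factor depending only on the quasisymmetry constant of $\tilde f|_{\partial\D}$. Since that constant is invariant under pre- and post-conjugation by elements of $\HHH(\widetilde X_0)$, the required uniform bound holds. Injectivity of $\LL$ then follows because the $L_\D$-measure of a box encodes the cross-ratio of its corners, so $L_f$ determines $\tilde f|_{\partial\D}$ up to a M\"obius transformation---exactly the indeterminacy in $[f]$. For bicontinuity, Teichm\"uller convergence $[f_n]\to[f]$ yields, after suitable normalization, uniform convergence of the $\tilde f_n|_{\partial\D}$ with controlled quasisymmetry constants, hence uniform weak* convergence of the $L_{f_n}$; conversely, uniform weak* convergence of Liouville currents controls the convergence of the boundary extensions and, via Beurling--Ahlfors, the dilatation of an explicit quasiconformal extension, giving Teichm\"uller convergence.

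For closedness, suppose $L_{f_n}\to\mu$ in uniform weak* topology with $\mu\in\CC(X_0)$. After normalizing each $\tilde f_n$ to fix three distinguished boundary points, the uniform bounds on the $L_{f_n}$-measures of boxes combined with their convergence let me extract a subsequential pointwise limit $h\colon\partial\D\to\partial\D$. The uniformity ensures that $h$ is quasisymmetric and $\pi_1(X_0)$-equivariant, so its Beurling--Ahlfors extension descends to a quasiconformal diffeomorphism $f\colon X_0\to X$ with $L_f=\mu$. Properness is then automatic: once $\LL$ is a homeomorphism onto a closed subset of $\CC(X_0)$, the pre-image of any compact set is compact. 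The hardest step is the closedness argument, namely extracting a quasisymmetric limit boundary map from a convergent sequence of Liouville currents; this is precisely where the uniformity built into $\CC(X_0)$ and the uniform weak* topology earns its keep, since ordinary weak* convergence is too weak in the infinite-dimensional setting to control the boundary extensions.
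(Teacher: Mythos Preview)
Your overall strategy matches the paper's: reduce everything to the quasisymmetric boundary extensions, use the Liouville mass of geodesic boxes as the basic invariant, and pass between Teichm\"uller distance and quasisymmetric constants via Beurling--Ahlfors type theorems. The decomposition into boundedness, injectivity, continuity of $\LL$ and of $\LL^{-1}$, closedness, and properness is exactly the paper's, and your outlines of the first four pieces are essentially correct. One technical slip in the closedness step: you write that ``its Beurling--Ahlfors extension descends to a quasiconformal diffeomorphism $f\colon X_0\to X$'', but the classical Beurling--Ahlfors extension is \emph{not} conformally natural, so the extension of a $\pi_1(X_0)$-equivariant boundary map need not be $\pi_1(X_0)$-equivariant and hence need not descend to the quotient. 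The paper uses the Douady--Earle barycentric extension here precisely because its conformal naturality guarantees the descent.

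The properness argument, however, has a real gap. In this paper ``proper'' means that the preimage of every \emph{bounded} set is bounded, boundedness being measured by the seminorm families defining the topologies. Your argument establishes instead that the preimage of every \emph{compact} set is compact; in these non-locally-compact spaces that is a strictly weaker conclusion, since bounded subsets of $\CC(X_0)$ are essentially never compact, and a homeomorphism onto a closed subset does not automatically send bounded sets to bounded sets under the inverse. The paper proves properness directly: if $\LL(B)$ is bounded in $\CC(X_0)$, testing against a continuous $\xi\geq 1$ over a fixed symmetric box gives a uniform bound on $L_f(Q)$ for all symmetric boxes $Q$ and all $[f]\in B$, hence a uniform bound on the quasisymmetric constants $M(f)$; the Douady--Earle extension then converts this into a uniform Teichm\"uller bound on $B$.
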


This theorem is proved  as Theorem~\ref{thm:LiouvilleEmbedding}. Recall that a map is \emph{proper} if the preimage of a bounded subset is bounded, which makes sense here because the topologies of $\T(X_0)$ and $\CC(X_0)$ are defined by families of seminorms. 

See Remark~\ref{rem:UniformWeak*Needed} for an explanation of why Theorem~\ref{thm:LiouvilleEmbeddingIntro} would fail if $\CC(X_0)$ was only endowed with the usual weak* topology, as opposed to the uniform weak* topology. 

Following Thurston's original approach, we now consider the rays $\R^+ \alpha \subset \CC(X_0)$ that are asymptotic to the image $ \LL \big( \T(X_0) \big) $, namely the  set of those bounded geodesic currents $\alpha \in \CC(X_0)$ for which there exists a sequence $\big\{ [f_n]\big\}_{n\in\N}$ of points of the Teichm\"uller space and a sequence of positive numbers $\{t_n\}_{n\in\N}$ such that $\alpha = \lim_{n\to\infty} t_n \LL\big([f_n] \big)$ and $\lim_{n\to\infty} t_n=0$. The union of these rays is the \emph{asymptotic cone} of the Liouville embedding $\LL$. 

\begin{thm}
\label{thm:TeichmAsymptoticConeIntro}
 The asymptotic cone of the Liouville embedding $\LL \colon \T(X_0) \to \CC(X_0)$ coincides with the subset $\ML(X_0)$ of bounded measured geodesic laminations in $X_0$, namely with the set of bounded geodesic currents $\alpha \in \CC(X_0)$ such that no two geodesics of the support of $\alpha$ in $G(\widetilde X_0)$ cross each other in $\widetilde X_0$. 
\end{thm}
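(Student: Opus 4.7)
The plan is to prove the two inclusions separately. The direction that the asymptotic cone is contained in $\ML(X_0)$ is the simpler one and relies on the vanishing of a self-intersection form under rescaling by $t_n\to 0$. The reverse direction is where the substance lies, and uses the earthquake convergence result advertised in the abstract.

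For the inclusion of the asymptotic cone in $\ML(X_0)$, I would introduce (or recall from earlier in the paper) a local self-intersection pairing $i_K\colon \CC(X_0)\times \CC(X_0)\to [0,\infty)$, indexed by compact sets $K$ of pairs of transversely intersecting geodesics in $\widetilde X_0$. This pairing should be continuous in the uniform weak* topology. The Liouville current $L_f$ satisfies a uniform bound on $i_K(L_f,L_f)$ as $[f]$ ranges over $\T(X_0)$, because the $\HHH(\widetilde X_0)$-invariance of the Liouville measure on $\widetilde X$, together with the boundedness of $L_f$ as a current on $X_0$, controls the mass of transverse pairs over $K$. Consequently, if $\alpha=\lim_n t_n\LL([f_n])$ with $t_n\to 0$, then
\[
i_K(\alpha,\alpha)=\lim_n t_n^{\,2}\, i_K(L_{f_n},L_{f_n})=0
\]
for every such $K$. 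Therefore no two geodesics in the support of $\alpha$ cross each other in $\widetilde X_0$, which is precisely the condition for $\alpha$ to lie in $\ML(X_0)$; boundedness is automatic from $\alpha\in\CC(X_0)$.

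For the reverse inclusion $\ML(X_0)\subseteq$ asymptotic cone, given $\mu\in\ML(X_0)$ I would use the earthquake flow: let $[f_t^{\mu}]\in\T(X_0)$ denote the point obtained by earthquaking the base hyperbolic structure along $t\mu$ for $t>0$. The paper's earthquake convergence result then yields
\[
\tfrac{1}{t}\,\LL\bigl([f_t^{\mu}]\bigr)\ \longrightarrow\ \mu
\]
in the uniform weak* topology as $t\to\infty$. Setting $t_n=1/n$ and $[f_n]=[f_n^{\mu}]$ exhibits $\mu$ as a point of the asymptotic cone.

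The main obstacle is establishing this last convergence in the uniform, rather than merely pointwise, weak* topology. Formally, the Liouville cocycle of the earthquake $E^{t\mu}$ has leading term $t\mu$ with a bounded remainder; what is delicate in the noncompact case is the uniform control of that remainder under the action of $\HHH(\widetilde X_0)$, which is precisely what the boundedness hypothesis on $\mu$ is designed to provide. This uniform earthquake-convergence statement is the technical step that the authors have isolated as a result of independent interest.
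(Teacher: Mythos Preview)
Your treatment of the inclusion $\ML(X_0)\subseteq{}$asymptotic cone via earthquakes matches the paper exactly: this is Theorem~\ref{thm:LimitEarthquakes}, and the difficulty you isolate (uniform rather than pointwise weak* convergence, controlled by the boundedness of $\mu$) is precisely the content of its proof.

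For the other inclusion your self-intersection approach differs from the paper's and, as written, has a gap. The paper argues directly with boxes (Proposition~\ref{prop:ThurstonBdryConsistsOfMeasuredLam}): if the support of $\alpha$ contained two crossing geodesics $g,g'$, choose a box $Q$ with $g$ in its interior and $g'$ in the interior of $Q^\perp$, so that $\alpha(Q),\alpha(Q^\perp)>0$; weak* convergence then forces $L_{f_n}(Q)\to\infty$ and $L_{f_n}(Q^\perp)\to\infty$ simultaneously, contradicting the identity $\E^{-L_{f_n}(Q)}+\E^{-L_{f_n}(Q^\perp)}=1$ of Lemma~\ref{lem:LiouvilleMassOrthoBox}.

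Your route instead asserts a \emph{uniform} bound on $i_K(L_f,L_f)$ as $[f]$ ranges over all of $\T(X_0)$, justified by ``the $\HHH(\widetilde X_0)$-invariance of the Liouville measure on $\widetilde X$, together with the boundedness of $L_f$ as a current''. Neither ingredient does the job. Boundedness of $L_f$ is a statement about each individual $[f]$ (that $\|L_f\|_\xi<\infty$ for every $\xi$), not a bound uniform in $[f]$; indeed $L_{f_n}(Q)$ can and does tend to infinity along sequences leaving every bounded set of $\T(X_0)$. And $\HHH(\widetilde X)$-invariance of $L_{\widetilde X}$ gives no control once you pull back by the unconstrained quasisymmetric map $\widetilde f$. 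The uniform bound you want \emph{is} true, but the way to see it is to cover $K$ by finitely many products $Q_i\times Q_i^\perp$ and observe that $L_f(Q_i)\cdot L_f(Q_i^\perp)=x\bigl(-\log(1-\E^{-x})\bigr)$ with $x=L_f(Q_i)$, which is bounded by elementary calculus. That computation rests squarely on the orthogonal-box identity of Lemma~\ref{lem:LiouvilleMassOrthoBox}, so once patched your argument is a repackaging of the paper's, with extra machinery (the pairing $i_K$, which the paper never introduces) but no real gain.
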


It is not too hard to see that every element of the asymptotic cone of $\LL$ is a bounded measured geodesic lamination. It is more difficult to show that every bounded measured geodesic lamination belongs to this cone. For this, we use Thurston's construction of earthquakes \cite{Ke, Thu1}. A bounded measured geodesic lamination $\alpha \in \ML(X_0)$ defines an \emph{earthquake map} $E^\alpha \colon \T(X_0) \to \T(X_0)$ \cite{Thu1, EpM, Sar1}. See Remark~\ref{rem:BoundedLamAndQuasiconfEarthqke} for comments about the close relationship, when the surface $X_0$ is noncompact, between the boundedness condition for measured geodesic laminations and the quasiconformal geometry of points of the Teichm\"uller space $\T(X_0)$. 

The following property proves that every bounded measured geodesic lamination belongs to the asymptotic cone of the Liouville embedding. It is also of independent interest as, when the surface $X_0$ is noncompact, the estimates of \cite{Ke} or \cite[Exp.~8]{FLP} cannot be used here. 

\begin{thm}
\label{thm:LimitEarthquakesIntro}
Let $\alpha \in \ML(X_0)$ be a bounded measured geodesic lamination in the Riemann surface $X_0$. Then, for every $[f] \in \T(X_0)$,
$$
\lim_{t\to \infty} \frac1t \LL \big( E^{t\alpha} [f] \big) = \alpha
$$
for the uniform weak* topology on the space $\CC(X_0)$ of bounded geodesic currents. 
\end{thm}

The space of rays in the asymptotic cone is the space $\PML(X_0)$ of projective bounded measured geodesic laminations. 
Theorem~\ref{thm:TeichmAsymptoticConeIntro} enables us to add its elements  as  boundary points to the Teichm\"uller space. By analogy with the case of compact surfaces, we call the space $\T(X_0) \cup \PML(X_0)$ the \emph{Thurston bordification} of the Teichm\"uller space $\T(X_0)$. Note that this bordification is not compact when $X_0$ is noncompact, as $\T(X_0)$ is not even locally compact in this case. See \cite{HS2, HS1, HS3, MiSa2} for related results. 

The article concludes with a result, Proposition~\ref{prop:QuasiconformalExtendsBoundary}, which shows that our construction is natural under quasiconformal diffeomorphisms. More precisely,  the homeomorphism $\T(X_1) \to \T(X_2)$ induced by a quasiconformal diffeomorphism $X_1 \to X_2$ uniquely extends to a homeomorphism $\T(X_1) \cup \PML(X_1) \to \T(X_2) \cup \PML(X_2)$ between the respective bordifications of the Teichm\"uller spaces $\T(X_1)$ and $\T(X_2)$. In particular, the quasiconformal mapping class group $\MCG(X_0)$ acts on $\T(X_0) \cup \PML(X_0)$.

\medskip
This article started as a preprint \cite{Sar3} by the second author alone.  The first author, who had been informally involved in the introduction of the uniform weak* topology, later joined to help with the exposition. However, the major technical steps were already fully in \cite{Sar3}. See also \cite{Sar6} for a different approach, in a much more restricted context. 

The authors thank the referee for a careful reading of the manuscript.

\section{The Teichm\"uller space  of a Riemann surface}
\label{sect:Teichmueller}

Let $X_0$ be a Riemann surface which is \emph{conformally hyperbolic}. This means that its universal cover $\widetilde X_0$ is biholomorphically equivalent to the unit disk
$$\D = \{ z\in \C; |z|<1\}.$$
Equivalently, $X_0$ is not the Riemann sphere $\C\cup \{ \infty\}$, the plane $\C$, the punctured plane $\C-\{0\}$, or a torus.

In the unit disk $\D$, the hyperbolic metric $2|dz|/(1-|z|^2)$ is invariant under the group $\HHH(\D)$ of biholomorphic maps of $\D$. It consequently descends to a hyperbolic metric on $X_0$ which does not depend on the biholomorphic identification $\widetilde X_0 \cong \D$. This is the \emph{Poincar\'e metric} of the conformally hyperbolic Riemann surface $X_0$. 

All Riemann surfaces in this article will be implicitly assumed to be conformally hyperbolic. We are particularly interested in the case where $X_0$ is non-compact, and a fundamental example will be that of the unit disk  $X_0=\D$. 

Recall that a \emph{quasiconformal diffeomorphism} $f \colon X_1 \to X_2$ between two Riemann surfaces is an orientation-preserving diffeomorphism such that
$$
K(f) = \sup_{z\in X_1}\, \frac{ \left|\frac{\partial f}{\partial z}(z)\right| +  \left|\frac{\partial f}{\partial \bar z}(z)\right|}{\left|\frac{\partial f}{\partial z}(z)\right| -  \left|\frac{\partial f}{\partial \bar z}(z)\right|} 
$$
is finite. Note that the denominator is always positive by the orientation-preserving hypothesis. The number $K(f)$ is the \emph{quasiconformal dilatation} of $f$.

The \emph{Teichm\"uller space} $\T(X_0)$ of the Riemann surface $X_0$ is the space of equivalence classes of all quasiconformal diffeomorphisms $f \colon X_0\to X$ from $X_0$ to another Riemann surface $X$. Two such quasiconformal maps $f_1\colon X_0\to X_1$ and $f_2\colon X_0\to X_2$ are {\it equivalent} if there exists a biholomorphic map $g \colon X_1\to X_2$ such that $f_2^{-1}\circ g\circ f_1$ is isotopic to the identity by a \emph{bounded  isotopy}, namely by an isotopy  that moves  points of $X_0$ by a bounded amount for the Poincar\'e metric of $X_0$. See \cite{EM} for equivalent formulations of this equivalence relation. 
 We denote by $[f]\in \T(X_0)$ the equivalence class of the quasiconformal map $f \colon X_0 \to X$.

In the fundamental case where $X_0$ is the unit disk $\D$, the Teichm\"uller space $\T(\D)$ is also known as the \emph{universal Teichm\"uller space} \cite{Be, GH}.

The Teichm\"uller space $\T(X_0)$ is endowed with the \emph{Teichm\"uller distance} defined by
$$
\dT \bigl( [f_1], [f_2] \bigr) ={\textstyle \frac12}  \log \inf_g K(g)
$$
where the infimum is taken over all quasiconformal maps $g\colon X_1 \to X_2$ such that $f_2^{-1}\circ g\circ f_1$ is \emph{bounded isotopic} to the identity of $X_0$ as above, namely isotopic to the identity by an isotopy moving points by a uniformly bounded amount for the Poincar\'e metric of $X_0$. Again, see \cite{EM} for equivalent formulations.

\section{Bounded geodesic currents and the uniform weak* topology}
\label{sect:GeodesicCurr}

\subsection{Geodesic currents}
\label{subsect:GeodesicCurr}
 We consider a conformally hyperbolic Riemann surface $X_0$ of hyperbolic type,  with universal cover $\widetilde X_0$.
 
Recall that the group $\HHH(\D)$ of biholomorphic maps of the unit disk $\D$  consists of all linear fractional maps of the form 
$$
z \mapsto \frac{\alpha z + \beta}{\bar\beta z + \bar\alpha}
$$
where $\alpha$, $\beta \in \C$ are such that $|\alpha|^2 - |\beta|^2=1$. In particular, these biholomorphic maps of the open disk $\D$ extend to homeomorphisms of the closed disk $\D \cup \partial \D$. 

This enables us to introduce a compactification of the universal cover $\widetilde X_0$ by its \emph{circle at infinity} $\partial_\infty \widetilde X_0$, intrinsically defined by the property that every biholomorphic map $\widetilde X_0 \to \D$ extends to a homeomorphism $\widetilde X_0 \cup \partial_\infty \widetilde X_0 \to \D \cup \partial \D$.

Each complete hyperbolic geodesic of the disk $\D$ is determined by its two endpoints in $\partial \D$. This identifies the space $G(\D)$ of (complete, oriented) geodesics of $\D$ to $\partial \D \times \partial \D - \Delta$, where 
 $\Delta= \bigl\{(x,x); x\in\partial \D\bigl\}$ is the diagonal of  $\partial \D \times \partial \D $. 
 
 More generally,  let $G(\widetilde X_0)$ denote the space of oriented complete geodesics of $\widetilde X_0$ for its Poincar\'e metric. Using a biholomorphic identification $\widetilde X_0 \cong \D$, such a geodesic is determined by its endpoints in the circle at infinity $\partial_\infty \widetilde X_0$, and this gives  a natural identification
$$
G(\widetilde X_0) = \partial_\infty \widetilde X_0 \times \partial_\infty \widetilde X_0 - \Delta
$$
where $\Delta= \bigl\{(x,x); x\in\partial_\infty \widetilde X_0\bigl\}$ is the diagonal of $\partial_\infty \widetilde X_0 \times \partial_\infty \widetilde X_0$. In particular, $G(\widetilde X_0)$ is homeomorphic to an open annulus. 

The fundamental group $\pi_1(X_0)$ acts biholomorphically on the universal cover $\widetilde X_0$, and this action therefore respects the Poincar\'e metric of $\widetilde X_0$. As a consequence, $\pi_1(X_0)$ also acts on $G(\widetilde X_0)$.

A \emph{geodesic current} in the Riemann surface $X_0$ is a Radon measure $\alpha$ on $G(\widetilde X_0)$ that is invariant under the action of $\pi_1(X_0)$.  The Radon property means that the integral $\alpha(K) = \int_K1\,d\alpha$ is finite and non-negative for every compact subset $K \subset G(\widetilde X_0)$. 

Most of the geodesic currents considered in this article will be \emph{balanced} (or \emph{unoriented} to use a more topological terminology), in the sense that they are invariant under the involution of $G(\widetilde X_0)$ that reverses the orientation of every geodesic. 

\subsection{Bounded geodesic currents and the uniform weak* topology}

As a space of Radon measures on $G(\widetilde X_0)$, it would be natural to endow the space $\mathcal C(X_0)$ of  geodesic currents with the classical \emph{weak* topology} (also called the \emph{vague topology}), defined by the family of semi-norms 
$$
\vert \alpha \vert_\xi = \Bigl\vert \int_{G(\widetilde X_0)} \xi \, d\alpha \Bigr\vert
$$
for $\alpha \in \mathcal C(X_0)$, as $\xi$ ranges over all continuous function $\xi \colon G(\widetilde X_0) \to \R$ with compact support. 

However, this topology does not quite fit our purposes, because it does not take into account the many symmetries of $\widetilde X_0$ provided by the isometric action of the group $\HHH(\widetilde X_0)$ of biholomorphic maps of $\widetilde X_0$. It is much better to consider the semi-norms 
$$
\Vert \alpha \Vert_\xi = 
\sup_{\phi \in \HHH(\widetilde X_0)} \Bigl\vert \int_{G(\widetilde X_0)} \xi\circ \phi \  d\alpha \Bigr\vert
$$
as $\xi$ ranges over all continuous function $\xi \colon G(\widetilde X_0) \to \R$ with compact support. (We are here using the same letter to denote the biholomorphic map $\phi \colon \widetilde X_0 \to \widetilde X_0$, which respects the Poincar\'e metric of $\widetilde X_0$,  and its induced homeomorphism $\phi \colon G(\widetilde X_0) \to G(\widetilde X_0)$ on the space $G(\widetilde X_0)$ of geodesics of $\widetilde X_0$.) We will restrict the geodesic currents considered accordingly. 

A \emph{bounded geodesic current} is a  geodesic current $\alpha \in \mathcal C(X_0)$ for which all norms $\Vert \alpha \Vert_\xi$ are finite. More precisely, a bounded geodesic current on the Riemann surface $X_0$ is a Radon measure $\alpha$ on the space $G(\widetilde X_0)= \partial_\infty \widetilde X_0 \times \partial_\infty \widetilde X_0 - \Delta$ of geodesics of $\widetilde X_0$ such that:
\begin{enumerate}
 \item for every continuous function $\xi \colon G(\widetilde X_0) \to \R$ with compact support, the integrals $
 \Bigl\vert \int_{G(\widetilde X_0)} \xi\circ \phi \  d\alpha \Bigr\vert
$ are bounded independently of the biholomorphic map $\phi \in \HHH(\widetilde X_0)$;
\item $\alpha$ is invariant under the action of the fundamental group $\pi_1(X_0)$ on $G(\widetilde X_0)$. 
\end{enumerate}

We let $\CC( X_0)$ denote the set of bounded geodesic currents in the Riemann surface $X_0$. The topology defined by the seminorms $\Vert \alpha \Vert_\xi$ is the  \emph{uniform weak* topology} of $\CC(X_0)$.

In particular, a sequence $\{\alpha_n \}_{n \in \N}$  of bounded geodesic currents $\alpha_n \in \CC(X_0)$ converges to $\alpha$ for the uniform weak* topology if and only if 
$$
\sup_{\phi \in \HHH(\widetilde X_0)} 
\Bigl\vert\int_{G(\widetilde X_0)} \xi\circ \phi \ d\alpha_n- \int_{G(\widetilde X_0)} \xi\circ \phi \ d\alpha
\Bigr\vert  \to 0 \text{ as } n\to \infty
$$
for every  continuous function $\xi \colon G(\widetilde X_0) \to \R$ with compact support. 

\subsection{The weak* and uniform weak* topologies}

We collect in this section a few basic properties of the weak* and uniform weak* topologies.

The following easy lemma will enable us to make some of our arguments a little more intuitive, by interpreting continuity properties in terms of sequences. 

\begin{lem}
\label{lem:GeodCurrentSpaceMetrizable}
The weak* and uniform  weak* topology of $\CC( X_0)$ are metrizable. 
\end{lem}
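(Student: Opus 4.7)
The plan is to exhibit, in each case, a countable subfamily of seminorms that generates the same topology; then the Fréchet metric
\[
d(\alpha,\beta)=\sum_k 2^{-k}\min\bigl(1,p_k(\alpha-\beta)\bigr)
\]
metrizes the corresponding locally convex topology. Hausdorffness of this metric is automatic, since even a countable dense family of compactly supported continuous functions already separates Radon measures on $G(\widetilde X_0)$.

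First I would use that $G(\widetilde X_0)\cong \partial_\infty \widetilde X_0\times\partial_\infty \widetilde X_0-\Delta$ is a second countable locally compact Hausdorff space. Fix an exhaustion $G(\widetilde X_0)=\bigcup_n K_n$ by compact sets with $K_n\subset\mathrm{int}(K_{n+1})$, bump functions $\chi_n\in C_c(G(\widetilde X_0))$ equal to $1$ on $K_n$ and supported in $K_{n+1}$, and for each $n$ a countable family $\{\xi_{n,k}\}_k\subset C_c(G(\widetilde X_0))$ supported in $K_n$ whose restrictions are sup-norm dense in the space of continuous functions on $K_n$ vanishing on $\partial K_n$. (Such a family exists by separability of $C_0(\mathrm{int}(K_n))$ for second countable locally compact spaces.) Let $\mathcal F=\{\chi_n\}_n\cup\{\xi_{n,k}\}_{n,k}$; this is a countable family of test functions.

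For the weak* topology, the key estimate is the following: given $\xi\in C_c$ with support in $K_{n-1}$ and $\epsilon>0$, choose $\xi_{n,k}\in\mathcal F$ with $\|\xi-\xi_{n,k}\|_\infty<\epsilon$; then $|\xi-\xi_{n,k}|\leq \epsilon\chi_n$ pointwise, so
\[
|\alpha-\beta|_\xi\ \leq\ |\alpha-\beta|_{\xi_{n,k}}+\epsilon\bigl(|\alpha|_{\chi_n}+|\beta|_{\chi_n}\bigr).
\]
If $\alpha_j$ converges to $\alpha$ in the Fréchet metric built from $\mathcal F$, then in particular $|\alpha_j-\alpha|_{\chi_n}\to 0$, so $|\alpha_j|_{\chi_n}$ stays bounded, and hence $|\alpha_j-\alpha|_\xi\to 0$ for every $\xi\in C_c$. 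The reverse implication is immediate since each seminorm in $\mathcal F$ is continuous for the full weak* topology. Thus the two topologies coincide, and the Fréchet metric is the desired metric.

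For the uniform weak* topology on $\CC(X_0)$, I would run the same argument with the same family $\mathcal F$, replacing $|\cdot|_\xi$ by $\|\cdot\|_\xi$ throughout. The pointwise inequality $|\xi-\xi_{n,k}|\leq\epsilon\chi_n$ remains valid after precomposing with any $\phi\in\HHH(\widetilde X_0)$, which yields
\[
\|\alpha-\beta\|_\xi\ \leq\ \|\alpha-\beta\|_{\xi_{n,k}}+\epsilon\bigl(\|\alpha\|_{\chi_n}+\|\beta\|_{\chi_n}\bigr).
\]
The main subtlety, and the reason the argument must be restricted to $\CC(X_0)$ rather than all of $\mathcal C(X_0)$, is precisely that this last step requires the quantities $\|\alpha\|_{\chi_n}$ to be finite; that is the defining property of a bounded geodesic current. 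Once this is in hand, convergence in the Fréchet metric is equivalent to convergence of all $\|\cdot\|_\xi$ for $\xi\in C_c$, and the proof of metrizability is complete.
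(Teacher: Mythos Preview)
Your proof is correct and follows essentially the same strategy as the paper: exhaust $G(\widetilde X_0)$ by compacts, pick a countable sup-norm dense family of compactly supported test functions together with dominating bump functions, derive the key inequality $\Vert\alpha-\beta\Vert_\xi\le\Vert\alpha-\beta\Vert_{\xi'}+\epsilon(\Vert\alpha\Vert_{\chi_n}+\Vert\beta\Vert_{\chi_n})$, and conclude that the countable family generates the topology so that the standard Fr\'echet metric works. The only cosmetic difference is that the paper phrases the comparison in terms of basic open balls $\mathcal B_{\xi}(\alpha;r)$ while you phrase it in terms of convergent sequences; since your Fr\'echet topology is first countable this is equivalent, and your explicit remark that finiteness of $\Vert\alpha\Vert_{\chi_n}$ is exactly the boundedness hypothesis is a nice touch the paper leaves implicit.
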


This property is of course classical for the weak* topology, and we just need to make sure that the argument extends to the uniform weak* topology.

\begin{proof}
Write $G(\widetilde X_0)$ as an increasing union $G(\widetilde X_0) = \bigcup_{n=1}^\infty K_n$ of compact subsets $K_n$, with $K_n \subset K_{n+1}$. Then, for every $n$,  choose a countable family $\mathcal F_n$ of continuous functions $\xi \colon G(\widetilde X_0) \to \R$ with support contained in $K_n$, such that  the set $\mathcal F_n$ is dense  in the space of all continuous functions with support in $K_n$ for the metric 
$$d(\xi, \xi') = \max _{g \in G(\widetilde X_0)} |\xi(g) - \xi'(g)|.$$
For each $n$, also choose  a nonnegative continuous function  $\xi^{(n)} \colon G(\widetilde X_0) \to \left[ 0,\infty\right[$ with compact support such that $\xi^{(n)}(g) \geq 1$ for every $g\in K_n$.  Finally, set 
 $$\mathcal F = \bigcup_{n=1}^\infty \mathcal F_n \cup \{\xi^{(n)} \}. $$
  We want to show that, as $\xi$ ranges over all elements of the countable set $\mathcal F$, the topology defined by the corresponding family of semi-norms $ \Vert \ \Vert_\xi$ coincides with the uniform weak* topology (defined by considering all continuous functions $\xi \colon G(\widetilde X_0) \to \R$ with compact support). 
 
 The uniform weak* topology is defined by the basis consisting of all ``balls''
 $$
 \mathcal B_{\xi_1, \xi_2, \dots, \xi_k} (\alpha; r) = \big\{ \beta\in \CC( X_0); \Vert \alpha-\beta \Vert_{\xi_i} < r \text{ for all } i=1, 2, \dots, k \big\}
 $$
 where $\alpha\in \CC(X_0)$,  the functions $\xi_i \colon G(\widetilde X_0) \to \R$ with $i=1$, $2$, \dots, $k$ are continuous with compact support, and $r>0$. 
 
 For such a ball  $ \mathcal B_\xi (\alpha; r) $ associated to a single function $\xi$, the support of $\xi$ is contained in one of the compact subsets $K_n$. For an $\epsilon>0$ to be specified later, there is by definition of $\mathcal F_n$ a function $\xi' \in \mathcal F_n$ such that $d(\xi, \xi')<\epsilon$. As a consequence, remembering that $\xi^{(n)}$ is nonnegative and at least 1 on $K_n$, we have that $|\xi(g) - \xi'(g)| \leq \epsilon \xi^{(n)}(g)$ for every $g \in G(\widetilde X_0)$, and therefore
 $$
 \biggl| \int_{G(\widetilde X_0)} \xi\circ \phi \, d\alpha -  \int_{G(\widetilde X_0)}  \xi'\circ \phi \, d\alpha \biggr| \leq  \epsilon \int_{G(\widetilde X_0)}  \xi^{(n)}\circ \phi \, d\alpha 
 $$
 and
$$
 \biggl| \int_{G(\widetilde X_0)} \xi\circ \phi \, d\beta -  \int_{G(\widetilde X_0)}  \xi'\circ \phi \, d\beta \biggr| \leq  \epsilon \int_{G(\widetilde X_0)}  \xi^{(n)}\circ \phi \, d\beta 
 $$
 for every $\beta\in \CC( X_0)$ and every $\phi \in \HHH(\widetilde X_0)$.  This implies that
 $$
 \Vert \alpha - \beta \Vert_\xi \leq  \Vert \alpha - \beta \Vert_{\xi'} + \epsilon \Vert \alpha\Vert_{\xi^{(n)}} + \epsilon \Vert \beta\Vert_{\xi^{(n)}}. 
 $$
 
 If we choose $\epsilon>0$ small enough that $\epsilon \Vert\alpha\Vert_{\xi^{(n)}}<\frac r3$, this enables us to find two functions $\xi'$ and $\xi^{(n)}\in \mathcal F$ such that 
 $$
  \mathcal B_{\xi'} (\alpha; {\textstyle\frac r3}) \cap  \mathcal B_{\xi^{(n)}} (\alpha;  {\textstyle \frac r{3\epsilon}}) \subset  \mathcal B_\xi (\alpha; r) .
 $$
 
 By taking multiple intersections, it follows that for every ball 
 $$ \mathcal B_{\xi_1, \xi_2, \dots, \xi_k} (\alpha; r) =  \mathcal B_{\xi_1} (\alpha; r)  \cap  \mathcal B_{\xi_2} (\alpha; r)  \cap \dots \cap  \mathcal B_{\xi_k} (\alpha; r) 
 $$
 there exists $\xi_1'$, $\xi_2'$, \dots, $\xi_{k'}'\in \mathcal F$ and $r'>0$ such that
 $$
 \mathcal B_{\xi_1', \xi_2', \dots, \xi_{k'}'} (\alpha; r') \subset \mathcal B_{\xi_1, \xi_2, \dots, \xi_k} (\alpha; r).
 $$
This shows that the basis consisting of the $ \mathcal B_{\xi_1', \xi_2', \dots, \xi_{k'}'} (\alpha; r')$ with all $\xi' \in \mathcal F$ defines the same topology as the similar basis where all functions with compact support are considered. In other words, the uniform weak* topology $\CC(X_0)$ is also the topology defined by the family of seminorms $ \Vert \ \Vert_\xi$ with  $\xi \in \mathcal F$.
 
 Since $\mathcal F$ is countable, it follows that this topology is metrizable. More precisely, if we list the elements of $\mathcal F$ as $\{ \xi_i; i =1, 2, \dots \}$, the uniform weak* topology is the metric topology associated to the metric $\delta$ defined by 
\begin{equation*}
 \delta(\alpha, \beta) = \sum_{i=1}^\infty 2^{-i} \min\{1, \Vert\alpha-\beta \Vert_{\xi_i} \}.
\end{equation*}

The proof that the weak* topology is metrizable is almost identical (and classical). 
\end{proof}

\begin{prop}
\label{prop:UniformNonUniformWhenCompact}
 If the Riemann surface $X_0$ is compact, the space $\CC( X_0)$ of bounded geodesic currents coincide with the space $ \mathcal C(X_0)$ of all geodesic currents, and  the uniform weak* topology coincides with the weak* topology on $\CC(X_0)$. 
\end{prop}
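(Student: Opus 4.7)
The plan is to exploit the fact that, when $X_0$ is compact, $\pi_1(X_0)$ is a cocompact discrete subgroup of $\HHH(\widetilde X_0)\cong \mathrm{PSL}_2(\R)$. This is the key geometric input: the right-coset space $\HHH(\widetilde X_0)/\pi_1(X_0)$ is compact, so one can choose a compact set $K\subset \HHH(\widetilde X_0)$ such that every $\phi\in \HHH(\widetilde X_0)$ factors as $\phi = \psi\gamma$ for some $\psi\in K$ and $\gamma\in\pi_1(X_0)$.

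To establish that every geodesic current is bounded, I would fix $\alpha\in \mathcal C(X_0)$ and a continuous $\xi\colon G(\widetilde X_0)\to \R$ with compact support. For any factorization $\phi = \psi\gamma$ as above, $\xi\circ\phi = (\xi\circ\psi)\circ\gamma$, and $\pi_1$-invariance of $\alpha$ yields $\int \xi\circ \phi\, d\alpha = \int \xi\circ \psi \, d\alpha$. The supports $\psi^{-1}(\mathrm{supp}\,\xi)$ for $\psi\in K$ are all contained in the compact set $C = \{\psi^{-1}(g) : \psi\in K,\, g\in \mathrm{supp}\,\xi\}\subset G(\widetilde X_0)$, so
\[
\Vert \alpha\Vert_\xi = \sup_{\psi\in K}\Bigl|\int \xi\circ \psi\, d\alpha\Bigr| \leq \Vert \xi\Vert_\infty\cdot \alpha(C) < \infty,
\]
proving $\alpha\in\CC(X_0)$.

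For the topology comparison, the trivial inequality $|\alpha|_\xi \leq \Vert \alpha\Vert_\xi$ (take $\phi = \mathrm{id}$) shows the uniform weak* topology is finer than the weak* topology. For the converse, Lemma~\ref{lem:GeodCurrentSpaceMetrizable} grants metrizability of both topologies, so it suffices to verify that weak* convergence $\alpha_n\to\alpha$ implies $\Vert \alpha_n-\alpha\Vert_\xi\to 0$ for each $\xi$. The family $\mathcal F = \{\xi\circ\psi : \psi\in K\}$ has supports uniformly contained in $C$, uniform sup-norm bound $\Vert\xi\Vert_\infty$, and is equicontinuous because $K$ acts equicontinuously by homeomorphisms on $G(\widetilde X_0)$ and $\xi$ is uniformly continuous. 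By Arzelà–Ascoli, $\mathcal F$ is relatively compact in the sup-norm topology on $C_c(G(\widetilde X_0))$. Testing weak* convergence against a compactly supported bump function equal to $1$ on $C$ gives a uniform bound $M = \sup_n \alpha_n(C) + \alpha(C) < \infty$, and a standard $\epsilon/3$ argument using a finite $\epsilon/(3M)$-net in $\mathcal F$ upgrades pointwise convergence of the functionals $\eta\mapsto \int\eta\, d(\alpha_n-\alpha)$ on $\mathcal F$ to uniform convergence, which is exactly $\Vert \alpha_n-\alpha\Vert_\xi\to 0$.

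The substantive step is the reduction $\sup_\phi\to\sup_\psi$ via cocompactness of $\pi_1(X_0)$; once that is in place, the equality $\mathcal C(X_0) = \CC(X_0)$ is immediate, and the topology comparison is a routine Arzelà–Ascoli plus $\epsilon/3$ argument. No step is particularly deep — the proposition is essentially a check that the uniformity defining $\Vert\cdot\Vert_\xi$ becomes vacuous once the fundamental group is cocompact.
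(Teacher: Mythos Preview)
Your proposal is correct and follows essentially the same approach as the paper: both arguments reduce the supremum over $\HHH(\widetilde X_0)$ to a supremum over a compact set of isometries via cocompactness of $\pi_1(X_0)$, and then exploit that compactness. The only cosmetic difference is that for the topology comparison the paper extracts a convergent subsequence of maximizing isometries $\phi_n\to\phi_\infty$ and applies a triangle-inequality estimate, whereas you package the same compactness as Arzel\`a--Ascoli on the family $\{\xi\circ\psi:\psi\in K\}$ followed by an $\epsilon/3$ net argument; these are interchangeable.
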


The two topologies do differ when $X_0$ is noncompact. For instance, if $g_n \in G(\D)$ is a sequence of geodesics of $\D$ that eventually leaves any compact subset of $G(\D)$, the Dirac measures $\delta_{g_n}\in \CC(\D)$ based at $g_n$ provide a sequence of bounded geodesic currents in $\CC(\D)$ that converges to $0$ for the weak* topology but has no limit for the uniform weak* topology. Also, the sum $\sum_{n=1}^\infty n \delta_{g_n}$ is a well-defined geodesic current, which is unbounded. 

\begin{proof}
 [Proof of Proposition~{\upshape\ref{prop:UniformNonUniformWhenCompact}}] We first show that every geodesic current $\alpha \in \mathcal C(X_0)$ is bounded. 
 
 We want to prove that, for every continuous function $\xi \colon G(\widetilde X_0) \to \R$ with compact support, the semi-norm 
 \begin{equation}
 \label{eqn:UniformWeak*=Weak*0}
 \Vert \alpha  \Vert_\xi = \sup_{\phi \in \HHH(\widetilde X_0)}
 \bigg|  \int_{G(\widetilde X_0)} \xi\circ \phi \, d\alpha  \bigg| 
\end{equation}
is finite. Because $X_0$ is compact, there exists a compact subset $K \subset \widetilde X_0$ whose image under the action of $\pi_1(X_0)$ covers all of $\widetilde X_0$, in the sense that $\widetilde X_0 = \bigcup_{\gamma \in \pi_1(X_0)} \gamma(K)$. Pick a base point $x_0 \in K$. Then, for every biholomorphic map $\phi \in \HHH(\widetilde X_0)$, there exists at least one $\gamma \in \pi_1(X_0)$ such that $\phi \circ \gamma(x_0) \in K$. Note that $\phi \circ \gamma$ is also biholomorphic, and that 
 $$
   \int_{G(\widetilde X_0)} \xi\circ \phi\circ \gamma \, d\alpha =  \int_{G(\widetilde X_0)} \xi\circ \phi \, d\alpha
 $$
 by invariance of the measure $\alpha$ under the action of $\pi_1(X_0)$. Therefore, in the supremum of (\ref{eqn:UniformWeak*=Weak*0}), we can restrict attention to those $\phi \in \HHH(\widetilde X_0)$ such that $\phi(x_0) \in K$. Such $\phi$ form a compact subset of $ \HHH(\widetilde X_0)\cong \mathrm{PSL}_2(\R)$, and the supremum is therefore finite. This proves that $ \Vert \alpha  \Vert_\xi<\infty$. 

As a conclusion,  every geodesic current  $\alpha \in \mathcal C(X_0)$ is bounded, and therefore  $\mathcal C( X_0) $ coincides with $ \CC( X_0)$. 

We now  prove that the weak* and uniform weak* topologies coincide on  $\mathcal C( X_0) = \CC( X_0)$.
 By Lemma~\ref{lem:GeodCurrentSpaceMetrizable}, these topologies are metrizable.  Therefore we only need to show that, when $X_0$ is compact, a sequence $\{\alpha_n\}_{n\in \N}$ converges to $\alpha$ for the uniform weak* topology if and only if it converges to $\alpha$ for the weak* topology. 
 
 Convergence for the uniform weak* topology clearly implies convergence for the weak* topology. So we can focus on the converse statement.
 
 Suppose that $\alpha_n \in \CC(X_0)$ converges to $\alpha$ for the weak* topology. We want to show that, for every continuous function $\xi \colon G(\widetilde X_0) \to \R$ with compact support, 
 \begin{equation}
 \label{eqn:UniformWeak*=Weak*1}
 \Vert \alpha_n - \alpha \Vert_\xi = \sup_{\phi \in \HHH(\widetilde X_0)}
 \bigg|  \int_{G(\widetilde X_0)} \xi\circ \phi \, d\alpha_n
 -  \int_{G(\widetilde X_0)} \xi\circ \phi \, d\alpha  \bigg| 
\end{equation}
 tends to 0 as $n$ tends to $\infty$. 
 
 As before, the compactness of $X_0$ enables us to  restrict attention to those $\phi \in \HHH(\widetilde X_0)$ such that $\phi(x_0) \in K$, which form a compact subset of $\HHH(\widetilde X_0)$ (remember that $\HHH(\widetilde X_0)$ is also the set of isometries of the Poincar\'e metric of $\widetilde X_0$). In particular, the  supremum of (\ref{eqn:UniformWeak*=Weak*1}) is  attained at some $\phi_n \in \HHH(\widetilde X_0)$, with $\phi_n(x_0)\in K$ and
 $$
  \Vert \alpha_n - \alpha \Vert_\xi = 
 \bigg|  \int_{G(\widetilde X_0)} \xi\circ \phi_n \, d\alpha_n
 -  \int_{G(\widetilde X_0)} \xi\circ \phi _n\, d\alpha  \bigg| .
 $$
 
 In addition, again by compactness of the set of those  $\phi \in \HHH(\widetilde X_0)$ with $\phi(x_0) \in K$, we can extract a subsequence $\{\phi_{n_k}\}_{k\in\N}$ that converges to some $\phi_\infty \in \HHH(\widetilde X_0)$ uniformly on compact subsets of $\widetilde X_0$. In particular, 
 \begin{equation}
 \label{eqn:UniformWeak*=Weak*2}
 \begin{split}
  \Vert \alpha_{n_k} - \alpha \Vert_\xi 
  &=  \bigg|  \int_{G(\widetilde X_0)} \xi\circ \phi_{n_k} \, d\alpha_{n_k}
 -  \int_{G(\widetilde X_0)} \xi\circ \phi _{n_k}\, d\alpha  \bigg| 
  \\
  &\leq 
 \bigg|  \int_{G(\widetilde X_0)} \xi\circ \phi_\infty \, d\alpha_{n_k}
 -  \int_{G(\widetilde X_0)} \xi\circ \phi _\infty\, d\alpha  \bigg|
 \\
 &\qquad\qquad
 +  \int_{G(\widetilde X_0)} | \xi\circ \phi_{n_k} - \xi\circ \phi_\infty| \, d\alpha_{n_k}
 +  \int_{G(\widetilde X_0)} | \xi\circ \phi_{n_k} - \xi\circ \phi_\infty| \, d\alpha
\end{split}
\end{equation}
 
It is now time to use  the fact that $\alpha = \lim_{n\to \infty} \alpha_n$ for the weak* topology, which implies that 
 \begin{equation}
 \label{eqn:UniformWeak*=Weak*4}
\lim_{k\to \infty}
 \bigg|  \int_{G(\widetilde X_0)} \xi\circ \phi_\infty \, d\alpha_{n_k}
 -  \int_{G(\widetilde X_0)} \xi\circ \phi _\infty\, d\alpha  \bigg|=0. 
\end{equation}

Also, pick a  nonnegative continuous function $\xi_\infty \colon G(\widetilde X_0) \to \R$ with compact support, such that $\xi_\infty\geq 1$ on a neighborhood of the support of $\xi \circ \phi_\infty$. Given $\epsilon>0$, 
$$
| \xi\circ \phi_{n_k} - \xi\circ \phi_\infty| \leq \epsilon \xi_\infty
$$ 
for $k$ large enough, since $\phi_{n_k} \to \phi_\infty$ as $k\to \infty$ uniformly on compact subsets of $\widetilde X_0$ (and therefore uniformly on compact subsets of $G(\widetilde X_0)$, if we use the same letter to denote the action of $\phi_{n_k}$ on $\widetilde X_0$ and on $G(\widetilde X_0)$). It follows that 
$$
 \int_{G(\widetilde X_0)} | \xi\circ \phi_{n_k} - \xi\circ \phi_\infty| \, d\alpha_{n_k} \leq \epsilon  \int_{G(\widetilde X_0)} \xi_\infty  \, d\alpha_{n_k} . 
$$
Since $\int_{G(\widetilde X_0)} \xi_\infty  \, d\alpha_{n_k} \to \int_{G(\widetilde X_0)} \xi_\infty  \, d\alpha_\infty$ as $k\to \infty$ by weak* convergence, we conclude that
 \begin{equation}
 \label{eqn:UniformWeak*=Weak*5}
\lim_{k\to \infty} \int_{G(\widetilde X_0)} | \xi\circ \phi_{n_k} - \xi\circ \phi_\infty| \, d\alpha_{n_k} =0.
\end{equation}

Similarly,
 \begin{equation}
 \label{eqn:UniformWeak*=Weak*6}
\lim_{k\to \infty} \int_{G(\widetilde X_0)} | \xi\circ \phi_{n_k} - \xi\circ \phi_\infty| \, d\alpha =0.
\end{equation}

The combination of the equations (\ref{eqn:UniformWeak*=Weak*2}--\ref{eqn:UniformWeak*=Weak*6}) proves that 
$$
\lim_{k\to \infty}  \Vert \alpha_{n_k} - \alpha \Vert_\xi=0.
$$

Therefore, we were able to extract from the sequence $\{\alpha_n\}_{n\in\N}$ a subsequence $\{\alpha_{n_k}\}_{k\in\N}$ that converges to $\alpha$ for the uniform weak* topology. If we apply the same process to all subsequences of the original sequence $\{\alpha_n\}_{n\in\N}$, we conclude that this sequence $\{\alpha_n\}_{n\in\N}$ converges to $\alpha$ for the uniform weak* topology. 

This completes the proof of Proposition~\ref{prop:UniformNonUniformWhenCompact}.
\end{proof}

Because we will frequently use it, we state as a lemma a well-known property of the weak* topology.

\begin{lem}
\label{lem:Weak*ConvImpliesConvMasses}
Suppose that the sequence $\{\alpha_n\}_{n\in\N}$ of geodesic currents $\alpha_n \in \mathcal C(X_0)$ converges to $\alpha \in \mathcal C(X_0)$ for the weak* topology. Then, for every every measurable subset $A\subset G(\widetilde X_0)$ whose topological boundary $\delta A$ has $\alpha$--mass $\alpha(\delta A)$ equal to $0$, 
$$\lim_{n\to \infty} \alpha_n(A) = \alpha(A).$$
\end{lem}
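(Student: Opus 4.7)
The plan is to run the standard portmanteau argument for Radon measures: sandwich the indicator $\chi_A$ between two compactly supported continuous functions and pass to the weak* limit on each side. Throughout I tacitly assume that $A$ is relatively compact, which is the relevant hypothesis for a Radon measure (and the only case needed in later arguments). The assumption $\alpha(\delta A)=0$ then yields $\alpha(\overline A) = \alpha(\mathrm{int}\,A) = \alpha(A)$, where $\overline A$ and $\mathrm{int}\,A$ denote the closure and interior of $A$ in $G(\widetilde X_0)$.

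For the upper bound, I fix $\epsilon>0$ and use outer regularity of the Radon measure $\alpha$ to choose a relatively compact open neighborhood $U$ of $\overline A$ with $\alpha(U) < \alpha(A) + \epsilon$. Since $G(\widetilde X_0) \cong \partial_\infty \widetilde X_0 \times \partial_\infty \widetilde X_0 - \Delta$ is a locally compact Hausdorff space, Urysohn's lemma furnishes a continuous function $\xi^+ \colon G(\widetilde X_0) \to [0,1]$ with compact support contained in $U$ and $\xi^+ \equiv 1$ on $\overline A$. Then $\chi_A \leq \xi^+$, and weak* convergence gives
\[
\limsup_{n\to\infty} \alpha_n(A) \leq \lim_{n\to\infty} \int_{G(\widetilde X_0)} \xi^+\,d\alpha_n = \int_{G(\widetilde X_0)} \xi^+\,d\alpha \leq \alpha(U) < \alpha(A) + \epsilon.
\]

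The lower bound is entirely parallel: inner regularity supplies a compact set $F \subset \mathrm{int}\,A$ with $\alpha(F) > \alpha(A) - \epsilon$, and Urysohn's lemma produces a continuous $\xi^- \colon G(\widetilde X_0) \to [0,1]$ with compact support in $\mathrm{int}\,A$ and $\xi^- \equiv 1$ on $F$. Then $\xi^- \leq \chi_A$, so
\[
\liminf_{n\to\infty} \alpha_n(A) \geq \lim_{n\to\infty} \int_{G(\widetilde X_0)} \xi^-\,d\alpha_n = \int_{G(\widetilde X_0)} \xi^-\,d\alpha \geq \alpha(F) > \alpha(A) - \epsilon.
\]
Since $\epsilon$ was arbitrary, the two estimates combine to give $\lim_{n\to\infty}\alpha_n(A) = \alpha(A)$.

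There is no genuine obstacle here — this is the textbook portmanteau theorem for weak* convergence tested against $C_c(G(\widetilde X_0))$. The only ingredients are the outer and inner regularity of Radon measures on a second countable locally compact Hausdorff space, together with Urysohn's lemma. The hypothesis $\alpha(\delta A)=0$ enters exactly once, to force $\alpha(\mathrm{int}\,A) = \alpha(\overline A) = \alpha(A)$ so that the upper and lower sandwich bounds close up as $\epsilon \to 0$.
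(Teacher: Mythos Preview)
Your argument is correct: this is precisely the standard portmanteau proof for Radon measures on a locally compact Hausdorff space, and the tacit relative-compactness hypothesis is indeed the only case the paper ever invokes (all applications are to boxes in $G(\widetilde X_0)$). The paper itself does not give a proof at all---it simply cites Bourbaki (\emph{Int\'egration}, chap.~IV, \S5, n${}^{\mathrm o}$~12) for this classical fact---so your write-up is more explicit than, but entirely in the spirit of, what the authors intend.
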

\begin{proof}
See for instance \cite[chap. IV, \S 5, n${}^{\mathrm o}$ 12]{Bou} for this classical property of weak* convergence, which holds in a much more general setting.
\end{proof}
The example of Dirac measures shows that the hypothesis that $\alpha(\delta A)=0$ is really necessary in Lemma~\ref{lem:Weak*ConvImpliesConvMasses}. 

\section{The Liouville embedding}

\subsection{The Liouville geodesic current} We saw that the group $\HHH(\D)$ of biholomorphic maps of $\D$  acts by isometries for the Poincar\'e metric, and therefore acts on the space $G(\D)$ of complete geodesics of $\D$. A computation shows that it  respects the \emph{Liouville measure} $L_\D$ on $G(\D)$ defined by the property that, if we parametrize the unit circle $\partial \D \subset \C$ by $t\mapsto  \E^{ \I t}$,
$$
L_\D(A) = \int_A \frac{dt\,ds}{|\E^{ \I t}- \E^{\I s}|^2}
$$
for any Borel subset $A \subset G(\D)  = \partial\D \times \partial\D - \Delta$.  See for instance Lemma~\ref{lem:LiouvilleAndCrossratios} below, and the well-known invariance of cross-ratios under linear fractional maps. 

More generally, if $\widetilde X$ is a Riemann surface biholomorphically equivalent to $\D$ by a biholomorphic map $\widetilde f\colon \widetilde X \to \D$, the induced homeomorphism $ \partial_\infty \widetilde X \to \partial \D$ provides a homeomorphism from the space $G(\widetilde X) = \partial_\infty \widetilde X \times \partial_\infty \widetilde X  - \Delta$ of geodesics of $\widetilde X$ to $G(\D) = \partial\D \times \partial\D - \Delta$, which we also denote by $\widetilde f$. We can then pull back the Liouville measure $L_\D$ to a measure $L_{\widetilde X}$ on $G(\widetilde X) $. The invariance of $L_\D$ under the group $\HHH(\D)$ of biholomorphic maps of $\D$ shows that this measure is independent of the choice of the biholomorphic map  $\widetilde f\colon \widetilde X \to \D$. The measure $L_{\widetilde X}$ is the \emph{Liouville measure} of the Riemann surface $\widetilde X \cong \D$. 

Consider an element $[f] \in \T(X_0)$ of the Teichm\"uller space of the Riemann surface $X_0$, represented by a quasiconformal diffeomorphism  $f\colon X_0 \to X$. Lift $f$ to a quasiconformal diffeomorphism  $\widetilde f \colon \widetilde X_0 \to \widetilde X$ between the universal covers. A fundamental property is that this quasiconformal diffeomorphism  admits a continuous extension $\widetilde f \colon \widetilde X_0 \cup \partial_\infty \widetilde X_0 \to \widetilde X \cup \partial_\infty \widetilde X$ (see the Beurling-Ahlfors Theorem~\ref{thm:Beurling-Ahlfors} below). The  restriction of this extension to the circles at infinity induces a homeomorphism from $G(\widetilde X_0) = \partial_\infty \widetilde X_0 \times \partial_\infty \widetilde X_0  - \Delta$ to $G(\widetilde X) = \partial_\infty \widetilde X \times \partial_\infty \widetilde X  - \Delta$. We can then pull back  the Liouville measure $L_{\widetilde X}$ by $\widetilde f$ to define a measure $L_{f}$ on $G(\widetilde X_0)$. More precisely, $L_{f}(A) = L_{\widetilde X}\bigl( \widetilde f(A) \bigr)$ for every measurable subset $A \subset G(\widetilde X_0)$, while
$$
\int_{G(\widetilde X_0)} \xi \, dL_{f} = \int_{G(\widetilde X)} \xi \circ \widetilde f^{-1} \, dL_{\widetilde X}
$$
for every continuous function $\xi\colon G(\widetilde X_0) \to R$ with compact support. 

The action of the fundamental group $\pi_1(X)$ on $\widetilde X$ is biholomorphic, and therefore respects the Liouville measure $L_{\widetilde X}$ on $G(\widetilde X)$. Since two  lifts $\widetilde f \colon \widetilde X_0 \to \widetilde X$ of $f$ differ only by the action of an element of $\pi_1( X)$, it follows that the measure $L_f$ is independent of the choice of this lift. Also, because $\widetilde f$ conjugates the action of $\pi_1( X)$ on $\widetilde X$ to the action of $\pi_1(X_0)$ on $\widetilde X_0$, the measure $L_f$ is invariant under the action of $\pi_1(X_0)$ on $G(\widetilde X_0)$. In other words, $L_f$ is a  geodesic current in $X_0$. 

\begin{lem}
 \label{lem:LiouvilleBounded}
 The Liouville geodesic current $L_f$ is bounded, and therefore belongs to $ \CC(X_0)$. 
\end{lem}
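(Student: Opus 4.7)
The plan is to reduce the statement to a uniform bound on the Liouville masses of certain ``boxes'' in $G(\widetilde X_0)$, and then exploit the tight link between the Liouville measure and cross-ratios. For any continuous $\xi \colon G(\widetilde X_0) \to \R$ with compact support $K = \mathrm{supp}\,\xi$, one has
$$
\Bigl| \int_{G(\widetilde X_0)} \xi \circ \phi \, dL_f \Bigr| \leq \Vert\xi\Vert_\infty \, L_f\bigl(\phi^{-1}(K)\bigr),
$$
so it suffices to prove that $\sup_{\phi \in \HHH(\widetilde X_0)} L_f\bigl(\phi^{-1}(K)\bigr) < \infty$ for every compact $K \subset G(\widetilde X_0)$. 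Since $K$ stays bounded away from the diagonal in $G(\widetilde X_0) = \partial_\infty \widetilde X_0 \times \partial_\infty \widetilde X_0 - \Delta$, it is contained in a finite union of product ``boxes'' $B_i = I_i \times J_i$, where $I_i$, $J_i \subset \partial_\infty \widetilde X_0$ are closed arcs with disjoint closures. By subadditivity, the problem further reduces to bounding $L_f\bigl(\phi^{-1}(B)\bigr)$ for a single such box $B$, uniformly in $\phi$.

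Here I invoke the classical fact that $L_{\widetilde X_0}(B)$ equals the logarithm of a cross-ratio of the four endpoints of $I$ and $J$ (compare Lemma~\ref{lem:LiouvilleAndCrossratios} below). Since $\phi$ is a M\"obius transformation after identifying $\widetilde X_0 \cong \D$, it preserves cross-ratios, so $L_{\widetilde X_0}\bigl(\phi^{-1}(B)\bigr) = L_{\widetilde X_0}(B)$, a finite quantity independent of $\phi$. Now $\phi^{-1}(B)$ is itself a box, and because the boundary extension $\widetilde f \colon \partial_\infty \widetilde X_0 \to \partial_\infty \widetilde X$ is a homeomorphism, the image $\widetilde f\bigl(\phi^{-1}(B)\bigr)$ is again a box in $G(\widetilde X)$, whose $L_{\widetilde X}$-mass is the logarithm of the cross-ratio of the four $\widetilde f$-images of the endpoints. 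By the Beurling-Ahlfors Theorem~\ref{thm:Beurling-Ahlfors}, this boundary extension is quasisymmetric with quasisymmetric constant controlled by the dilatation $K(f)$. A classical consequence is that such a map distorts cross-ratios by a bounded amount: there is a function $\eta$, depending only on $K(f)$, such that
$$
L_f\bigl(\phi^{-1}(B)\bigr) = L_{\widetilde X}\bigl(\widetilde f(\phi^{-1}(B))\bigr) \leq \eta\bigl(L_{\widetilde X_0}(B)\bigr),
$$
which is independent of $\phi$. Summing over the finite cover gives the conclusion.

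The main technical point is the cross-ratio distortion estimate for quasisymmetric homeomorphisms. This is a classical consequence of Beurling-Ahlfors extension theory, but the key subtlety is verifying that the distortion function $\eta$ depends only on $K(f)$ and not on the particular $\phi$---which is exactly why the M\"obius-invariant, cross-ratio expression for $L_{\widetilde X_0}(B)$ is essential to the argument above.
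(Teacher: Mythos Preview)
Your proof is correct and follows essentially the same route as the paper's own argument (given there as Lemma~\ref{lem:LiouvilleBounded2}): cover the support of $\xi$ by finitely many boxes, use the M\"obius invariance of the Liouville measure to make the $L_{\widetilde X_0}$--mass of $\phi^{-1}(B)$ independent of $\phi$, and then invoke the quasisymmetric distortion estimate for Liouville masses of boxes (stated in the paper as Proposition~\ref{prop:QuasiSymmetricEta}, your function $\eta$). The only cosmetic difference is that the paper bounds the integral directly by a sum over boxes rather than first passing through $L_f(\phi^{-1}(K))$.
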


We postpone the proof of Lemma~\ref{lem:LiouvilleBounded} to \S \ref{sect:QuasiconfQuasisym}, where it will be proved as Lemma~\ref{lem:LiouvilleBounded2}. 

If two quasiconformal diffeomorphisms $f_1 \colon X_0 \to X_1$ and $f_2 \colon X_0 \to X_2$ represent the same element $[f_1]=[f_2]$ in the Teichm\"uller space $\T(X_0)$, there exists a biholomorphic map $g \colon X_1 \to X_2$ such that $f_2^{-1} \circ g \circ f_1$ is bounded isotopic to the identity in $X_0$. We can therefore choose lifts $\widetilde f_1 \colon \widetilde X_0 \to \widetilde X_1$, $\widetilde f_2 \colon \widetilde X_0 \to \widetilde X_2$,  $\widetilde g \colon \widetilde X_1 \to \widetilde X_2$ of these diffeomorphisms so that $\widetilde f_2^{-1} \circ \widetilde g \circ \widetilde f_1$ is bounded isotopic to the identity in $\widetilde X_0$. A bounded isotopy fixes the boundary at infinity $\partial_\infty \widetilde X_0$; indeed, assuming $\widetilde X_0= \D$ without loss of generality, the euclidean distance by which a bounded isotopy moves a point $x\in \D$ tends to 0 as $x$ approaches the boundary circle $\partial \D$.  This implies that the restrictions of $\widetilde f_2$ and $\widetilde g \circ \widetilde f_1$  to maps $\partial_\infty \widetilde X_0  \to \partial_\infty \widetilde X_2$ coincide. As the biholomorphic map $\widetilde g$ sends the Liouville measure $L_{\widetilde X_1}$ to $L_{\widetilde X_2}$, it follows that the measures $L_{f_1}$ and $L_{f_2}$ coincide on $G(\widetilde X_0)$.

As a consequence, the Liouville geodesic current $L_f \in \CC(X_0)$ depends only on the element $[f] $ of the Teichm\"uller space $ \T(X_0)$  represented by the quasiconformal diffeomorphism $f \colon X_0 \to X$. 

The map 
$$
\LL \colon \T(X_0) \to \CC(X_0)
$$
defined by the property that $\LL \big( [f] \big) = L_f$ is the  \emph{Liouville embedding}.

\begin{thm}
\label{thm:LiouvilleEmbedding}
 Let $X_0$ be a conformally hyperbolic Riemann surface, let the Teichm\"uller space $\T(X_0)$ be equipped with the Teichm\"uller distance $\dT$, and let the space $\CC(X_0)$ of bounded geodesic currents be endowed with the uniform weak* topology defined in {\upshape \S \ref{sect:GeodesicCurr}}. Then, the Liouville embedding $\LL \colon \T(X_0) \to \CC(X_0)$ is a homeomorphism onto its image, it is a proper  map, and its image $\LL \bigl( \T(X_0) \bigr)$ is closed in $\CC(X_0)$.  
\end{thm}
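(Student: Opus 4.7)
The plan is to reduce every assertion to a corresponding statement about boundary homeomorphisms, using the Beurling--Ahlfors theorem (Theorem~\ref{thm:Beurling-Ahlfors} in the paper) as the bridge between the quasiconformal geometry of $\T(X_0)$ and the quasisymmetric geometry of the circle at infinity. The key translation device is the cross-ratio formula for the Liouville measure, which presumably appears as Lemma~\ref{lem:LiouvilleAndCrossratios}: the mass that $L_{\widetilde X_0}$ assigns to a ``rectangle'' $[a,b] \times [c,d] \subset \partial_\infty \widetilde X_0 \times \partial_\infty \widetilde X_0$, with $a,b,c,d$ in cyclic order, is the logarithm of the cross-ratio of $a,b,c,d$. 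Consequently, the $L_f$-mass of such a rectangle is the log cross-ratio of the image four-tuple under the boundary extension of a lift $\widetilde f$.

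First, I would show injectivity of $\LL$. If $L_{f_1}=L_{f_2}$, the cross-ratio formula forces the boundary maps $\widetilde f_1, \widetilde f_2 \colon \partial_\infty \widetilde X_0 \to \partial_\infty \widetilde X$ to preserve all cross-ratios of quadruples; hence they differ by an element of $\HHH(\widetilde X)$, which by the discussion preceding the theorem implies $[f_1]=[f_2]$. Second, I would set up the central dictionary: uniform weak* convergence $L_{f_n} \to L_f$ in $\CC(X_0)$ is equivalent to uniform convergence of boundary cross-ratio distortions of the maps $\widetilde f_n \circ \widetilde f^{-1}$, uniformly over all rectangles of a given shape anywhere on $\partial_\infty \widetilde X_0$. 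Approximating characteristic functions of such rectangles by compactly supported continuous test functions $\xi$, and using that the supremum over $\phi \in \HHH(\widetilde X_0)$ in the seminorm $\Vert \cdot \Vert_\xi$ precisely captures all positions and scales via Möbius transport, one sees that the uniform aspect of the topology is exactly the right notion of uniformity on the boundary.

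With the dictionary in place, the homeomorphism assertion follows from Beurling--Ahlfors. Teichm\"uller convergence $\dT([f_n],[f]) \to 0$ is equivalent to the quasiconformal dilatations of suitable representatives of $[f_n] \circ [f]^{-1}$ tending to $1$, which by Beurling--Ahlfors is equivalent to their boundary maps tending to the identity uniformly in the quasisymmetric sense, which by the cross-ratio translation is equivalent to $L_{f_n} \to L_f$ in the uniform weak* topology. For properness, the same correspondence is applied quantitatively: a subset of $\LL(\T(X_0))$ on which finitely many seminorms $\Vert \cdot \Vert_\xi$ are uniformly bounded corresponds, via cross-ratios, to boundary maps of uniformly bounded quasisymmetric constant, and hence, by Beurling--Ahlfors, to a Teichm\"uller-bounded family.

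For closedness of $\LL(\T(X_0))$, suppose $L_{f_n} \to \alpha$ in the uniform weak* topology, with $\alpha \in \CC(X_0)$. Since the sequence is in particular uniformly bounded in the relevant seminorms, properness (more precisely, the quasisymmetric bound extracted from the dictionary) gives a uniform quasisymmetric bound on the boundary maps $\widetilde f_n$, after normalizing by elements of $\HHH(\widetilde X_0)$ and $\HHH(\widetilde X_n)$. An Arzel\`a--Ascoli argument then yields a subsequential limit $h \colon \partial_\infty \widetilde X_0 \to S^1$ which is quasisymmetric; by Beurling--Ahlfors $h$ extends to a quasiconformal diffeomorphism $\widetilde f_\infty$, whose equivariance under $\pi_1(X_0)$ descends from the invariance of $\alpha$. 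This produces $[f_\infty] \in \T(X_0)$ with $L_{f_\infty}=\alpha$. I expect the main obstacle to lie in this last step, specifically in verifying that the limit boundary map is $\pi_1(X_0)$--equivariant with respect to a well-defined action on the target, since the target Riemann surfaces $X_n$ vary with $n$; one needs to use the uniform quasisymmetric bound together with the invariance of $\alpha$ to pin down a single limiting target. A secondary technical point is the careful construction of test functions $\xi$ which separate boundary cross-ratios while remaining compactly supported on $G(\widetilde X_0)$, so that the dictionary between uniform weak* convergence and uniform quasisymmetric convergence becomes a genuine equivalence rather than just an implication in one direction.
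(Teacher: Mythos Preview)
Your overall strategy matches the paper's almost exactly: injectivity via cross-ratios, continuity in both directions via the correspondence between Teichm\"uller distance and quasisymmetric distortion of boxes, properness via a uniform quasisymmetric bound extracted from a single seminorm, and closedness via normalization plus Arzel\`a--Ascoli. The paper organizes these as Propositions~\ref{prop:LiouvilleInjective}, \ref{prop:LiouvilleContinuous}, \ref{prop:InverseLiouvilleContinuous}, \ref{prop:LiouvilleClosed}, \ref{prop:LiouvilleProper}, and the box/step-function approximation you sketch is precisely Lemma~\ref{lem:ConvergenceLiouvilleBoxes} and the argument that follows it.

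There is one genuine gap. In both the inverse-continuity step and the closedness step you invoke the Beurling--Ahlfors extension to pass from a quasisymmetric boundary map to a quasiconformal diffeomorphism. That is not enough: the Beurling--Ahlfors extension is not conformally natural, so there is no reason the extension of a $\pi_1(X_0)$--equivariant boundary map should itself be equivariant, and hence no reason it should descend to a map $X_0\to X_\infty$. The paper resolves this by using the Douady--Earle barycentric extension (packaged as Theorem~\ref{thm:BeurlingAhlforsImproved}), which is equivariant under pre- and post-composition by M\"obius maps; this is what lets the quasiconformal extension descend to the quotient surfaces with controlled dilatation. Without Douady--Earle (or some equivalent conformally natural extension), your closedness argument does not produce an element of $\T(X_0)$, and your inverse-continuity argument does not give a quasiconformal representative of $f_n\circ f_\infty^{-1}$ with dilatation tending to $1$.

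On the equivariance obstacle you flagged: the paper's resolution is simpler than you suggest. After normalizing the lifts $\widetilde f_n$ by three boundary points and extracting a uniformly convergent subsequence $\widetilde g_{n_k}\circ\widetilde f_{n_k}\to\widetilde f_\infty$, one observes that for each $\phi\in\pi_1(X_0)$ the conjugate $\widetilde f_\infty\circ\phi\circ\widetilde f_\infty^{-1}$ is the uniform limit of M\"obius maps $\widetilde g_{n_k}\circ\widetilde f_{n_k}\circ\phi\circ(\widetilde g_{n_k}\circ\widetilde f_{n_k})^{-1}$, hence itself M\"obius. This produces the limiting $\pi_1(X_0)$--action on $\D$ directly; the invariance of $\alpha$ plays no role here.
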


\begin{rem}
\label{rem:UniformWeak*Needed}
 The above statement would be false if $\CC(X_0)$ was only endowed with the usual weak* topology. Indeed, consider a sequence $\{g_n\}_{n\in\N}$ of geodesics of the disk $\D$ that leaves every compact subset of $G(\D)$. For any $[f_0] \in \T(\D)$, let $[f_n] = E^1_{g_n}[f_0]$ be obtained from $[f_0]$  by performing an elementary earthquake along $g_n$ (see \S \ref{subsect:ElemEartqke}). Then, for every compact subset $K \subset G(\D)$, the measure $\LL\big( [f_n] \big)$ coincides with $\LL\big( [f_0] \big)$ on $K$ for $n$ sufficiently large. It follows that the sequence $\big\{ \LL\big( [f_n] \big) \big\}_{n\in\N}$ converges to  $\LL\big( [f_0] \big)$ for the weak* topology as $n$ tends to infinity. However,  the Teichm\"uller distance $\dT \big( [f_0], [f_n] \big)>0$ is constant and $[f_n]$ consequently does not converge to $[f_0]$ for the Teichm\"uller metric on $\T(X_0)$. This shows that the inverse map $\LL^{-1} \colon  \LL \big( \T(X_0) \big) \to \T(X_0)$ is not continuous when its domain is only endowed with the weak* topology, so that the uniform weak* topology is really needed.  
\end{rem}

 The proof of Theorem~\ref{thm:LiouvilleEmbedding} will take a while. It will be proved in several steps, as Propositions~\ref{prop:LiouvilleContinuous}, \ref{prop:InverseLiouvilleContinuous}, \ref{prop:LiouvilleClosed} and \ref{prop:LiouvilleProper} below. We first introduce a few technical tools to connect the quasiconformal geometry of Riemann surfaces to measures on spaces of geodesics.

 \subsection{Boxes of geodesics}
 \label{subsect:GeodesicBoxes}
 
 Let $\widetilde X$ be a simply connected conformally hyperbolic Riemann surface, and let $\partial_\infty \widetilde X$ be its circle at infinity. Typically, $\widetilde X$ will be the universal cover of a conformally hyperbolic Riemann surface $X$. 
 
 The orientation of $\widetilde X$ specifies a boundary (counterclockwise) orientation for $\partial_\infty \widetilde X$. In particular, two points $a$, $b\in \partial_\infty \widetilde X$ delimit a unique interval $[a,b] \subset \partial_\infty \widetilde X$, consisting of those points $x$ such that $a$, $x$, $b$ occur in this order for the counterclockwise orientation of $\partial_\infty \widetilde X$. Note that $[b,a]$ is different from $[a,b]$, and that $[a,b] \cup [b,a]= \partial_\infty \widetilde X$. 
 
 Four distinct points $a$, $b$, $c$, $d\in \partial_\infty \widetilde X$, occurring counterclockwise in this order, determine two disjoint intervals $[a,b]$, $[c,d] \subset \partial_\infty \widetilde X$ and a subset $Q= [a,b] \times [c,d]$ of the space of geodesics $G( \widetilde X) = \partial_\infty \widetilde X \times \partial_\infty \widetilde X - \Delta$. We will refer to such a subset $Q$ as a \emph{box of geodesics of $ \widetilde X$}, or as a \emph{box in $G(\widetilde X)$}.
 
 For the unit disk $\D$ and its Liouville geodesic current $L_\D \in \CC( \D)$, a simple integral computation expresses the Liouville mass of a box of geodesics in terms of  the cross-ratio of the four points of $ \partial \D$ determining this box. 
 
\begin{lem}
\label{lem:LiouvilleAndCrossratios}
\pushQED{\qed}
For a box of geodesics $Q= [a,b] \times [c,d] \subset G(\D)$ with $a$, $b$, $c$, $d\in \partial \D  \subset \C$, 
\begin{equation*}
L_\D \big([a,b]\times [c,d]\big)= \iint_Q \frac{ds\,dt}{|\E^{\I s} - \E^{\I t} |^2}= 
\log\frac{(a-c)(b-d)}{(a-d)(b-c)}. \qedhere
\end{equation*}
\end{lem}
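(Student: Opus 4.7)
My plan is a direct evaluation of the double integral. The first equality is just the definition of the Liouville measure $L_\D$ recalled at the start of the section, so all the content is in the second equality. I would begin by rewriting the integrand using the elementary identity
\[
|\E^{\I s} - \E^{\I t}|^2 = 2 - 2\cos(s-t) = 4 \sin^2\bigl((s-t)/2\bigr),
\]
so that the integrand becomes $1/\bigl(4\sin^2((s-t)/2)\bigr)$.

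The key step is to exhibit a two-variable antiderivative. A direct calculation shows that $F(s,t) = \log\bigl|\sin((s-t)/2)\bigr|$ satisfies $\partial_s F = \tfrac12 \cot((s-t)/2)$ and then $\partial_t \partial_s F = 1/\bigl(4\sin^2((s-t)/2)\bigr)$. Writing $a=\E^{\I\alpha}$, $b=\E^{\I\beta}$, $c=\E^{\I\gamma}$, $d=\E^{\I\delta}$ with $\alpha<\beta<\gamma<\delta<\alpha+2\pi$, so that the four points are counterclockwise on $\partial\D$ and the box $Q$ corresponds to the rectangle $[\alpha,\beta] \times [\gamma,\delta]$ in angle coordinates, the antiderivative is smooth on a neighborhood of this rectangle since $s-t$ stays strictly negative and bounded away from $-2\pi$. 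Fubini then yields
\[
\iint_Q \frac{ds\,dt}{|\E^{\I s}-\E^{\I t}|^2} = F(\beta,\delta) - F(\beta,\gamma) - F(\alpha,\delta) + F(\alpha,\gamma).
\]

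To finish I would apply $|\E^{\I \mu} - \E^{\I \nu}| = 2\bigl| \sin((\mu-\nu)/2) \bigr|$ to rewrite each $F$-value as the log of a chord length (the constants $\log 2$ cancel telescopically), obtaining
\[
\log \left| \frac{(b-d)(a-c)}{(a-d)(b-c)} \right|.
\]
A sign check using the counterclockwise ordering of $a$, $b$, $c$, $d$ on $\partial\D$ shows that this crossratio is real and greater than $1$, which simultaneously justifies dropping the absolute value and confirms $L_\D(Q)>0$. The computation is essentially mechanical and the only mild obstacle is the bookkeeping of signs and absolute values; as a conceptual alternative, one could instead exploit the $\HHH(\D)\cong\mathrm{PSL}_2(\R)$--invariance of both sides (the Liouville measure by construction, the crossratio by the Möbius property) to normalize three of the four points and reduce to a one-variable integral.
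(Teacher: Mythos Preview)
Your argument is correct. The paper itself does not prove this lemma at all: it is stated with a \qed and introduced only as ``a simple integral computation,'' so your direct evaluation via the antiderivative $F(s,t)=\log\bigl|\sin((s-t)/2)\bigr|$ is precisely the routine calculation the authors leave to the reader.
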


\begin{lem}
 \label{lem:LiouvilleDeterminesBox}
 Let $Q$ and $Q' \subset G(\widetilde X)$ be two boxes of geodesics in $\widetilde X$. There exists a biholomorphic map $\widetilde X \to \widetilde X$ sending $Q$ to $Q'$ if and only if they have the same Liouville mass $L_{\widetilde X}(Q) = L_{\widetilde X}(Q')$. 
\end{lem}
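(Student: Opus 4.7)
The plan is to reduce everything to a crossratio statement on $\partial \D$, then exploit the 3-transitivity of the Möbius group. The forward direction is immediate from the invariance of the Liouville measure: $L_{\widetilde X}$ is pulled back from $L_\D$, which is $\HHH(\D)$-invariant, so $L_{\widetilde X}$ is $\HHH(\widetilde X)$-invariant, and any biholomorphism sending $Q$ to $Q'$ must preserve Liouville mass.

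For the converse, I would fix a biholomorphic identification $\widetilde X \cong \D$ and write the two boxes as $Q = [a,b] \times [c,d]$ and $Q' = [a',b'] \times [c',d']$, with each quadruple occurring counterclockwise on $\partial \D$. By Lemma~\ref{lem:LiouvilleAndCrossratios}, the hypothesis $L_{\widetilde X}(Q) = L_{\widetilde X}(Q')$ translates into equality of crossratios
\[
\frac{(a-c)(b-d)}{(a-d)(b-c)} = \frac{(a'-c')(b'-d')}{(a'-d')(b'-c')}.
\]
The group $\HHH(\D)$ acts sharply 3-transitively on $\partial \D$ by orientation-preserving linear fractional transformations, so there is a unique $\phi \in \HHH(\D)$ sending $a \mapsto a'$, $b \mapsto b'$, $c \mapsto c'$. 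Setting $d'' = \phi(d)$, the invariance of crossratios under linear fractional maps implies that $(a',b';c',d'')$ has the same crossratio as $(a,b;c,d)$, hence as $(a',b';c',d')$. Since the crossratio is an injective function of its fourth argument when the first three are held fixed, $d'' = d'$, and consequently $\phi$ carries the ordered quadruple $(a,b,c,d)$ to $(a',b',c',d')$, hence the box $Q$ to the box $Q'$.

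The only subtlety worth flagging is compatibility with the cyclic order: the map $\phi$ sends the counterclockwise triple $(a,b,c)$ to $(a',b',c')$, and since $\phi$ is orientation-preserving on $\partial \D$ the image $d'' = \phi(d)$ automatically lies in the counterclockwise arc from $c'$ back to $a'$, matching the cyclic placement of $d'$. This rules out the a priori worry that the equal crossratios might correspond to some differently ordered quadruple on $\partial \D$ rather than to the one defining $Q'$; once this is confirmed, the crossratio injectivity argument closes the proof cleanly.
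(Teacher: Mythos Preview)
Your proof is correct and follows essentially the same route as the paper: reduce to $\D$, use Lemma~\ref{lem:LiouvilleAndCrossratios} to translate equal Liouville masses into equal crossratios, and then invoke the classical fact that $\HHH(\D)$ acts transitively on boxes with a given crossratio. The paper compresses the converse into ``elementary algebra shows\ldots'', whereas you spell out the 3-transitivity plus crossratio-injectivity argument explicitly; the content is the same.
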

\begin{proof}
Using a biholomorphic map $\widetilde X \to \D$, we can assume without loss of generality that $\widetilde X= \D$. Then, the biholomorphic maps of $\D$ are the linear fractional maps
$
z \mapsto \frac{\alpha z + \beta}{\bar\beta z + \bar\alpha}
$
where $\alpha$, $\beta \in \C$ are such that $|\alpha|^2 - |\beta|^2=1$. Elementary algebra shows that, given two boxes $Q= [a,b] \times [c,d]$ and $Q'= [a',b'] \times [c',d']$ in $G(\D)$, there exists such a linear fractional map sending $Q$ to $Q'$ if and only if the cross-ratios $\frac{(a-c)(b-d)}{(a-d)(b-c)}$ and $\frac{(a'-c')(b'-d')}{(a'-d')(b'-c')}$ are equal. By  Lemma~\ref{lem:LiouvilleAndCrossratios}, this is equivalent to the property that the Liouville masses $L_\D(Q)$ and $L_\D(Q')$ are equal. 
\end{proof}

For a box of geodesics $Q= [a,b] \times [c,d] \subset G( \widetilde X) $, its \emph{orthogonal box} is the box $Q^\perp = [b,c]\times [d,a]$. 

Note that the definition is not quite as symmetric as one would hope, as $Q^{\perp\perp}$ is different from $Q$. In fact, $Q^{\perp\perp} = [c,d] \times [a,b]$ consists of all geodesics obtained by reversing the orientation of the geodesics of $Q$. In particular, $Q^{\perp\perp}$ has the same $\alpha$--mass as $Q$ for any balanced geodesic current, and the distinction between $Q$ and $Q^{\perp\perp}$ will consequently have little impact in this article since most geodesic currents considered here will be balanced (as defined at the end of \S \ref{subsect:GeodesicCurr}). 

\begin{lem}
\label{lem:LiouvilleMassOrthoBox}
Let $L_{\widetilde X}$ be the Liouville measure of a simply connected conformally hyperbolic Riemann surface $\widetilde X$. For every box of geodesics $Q\subset G(\widetilde X)$,
 $$
 \E^{-L_{\widetilde X}(Q)} + \E^{-L_{\widetilde X}(Q^\perp)} =1.
 $$
\end{lem}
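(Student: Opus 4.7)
The plan is to reduce to the case $\widetilde X = \D$ and then derive the identity directly from Lemma~\ref{lem:LiouvilleAndCrossratios}. By construction, any biholomorphic diffeomorphism $\widetilde X \to \D$ pushes $L_{\widetilde X}$ forward to $L_\D$, preserves the counterclockwise orientation of the circle at infinity, and therefore sends boxes to boxes while commuting with the $\perp$ operation. So I may assume without loss of generality that $\widetilde X = \D$.

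Writing $Q = [a,b] \times [c,d]$ with $a, b, c, d \in \partial\D$ arranged counterclockwise, so that $Q^\perp = [b,c] \times [d,a]$, Lemma~\ref{lem:LiouvilleAndCrossratios} gives
$$
\E^{-L_\D(Q)} = \frac{(a-d)(b-c)}{(a-c)(b-d)} \qquad \text{and} \qquad \E^{-L_\D(Q^\perp)} = \frac{(b-a)(c-d)}{(b-d)(c-a)}.
$$
After rewriting the second fraction as $\frac{(a-b)(c-d)}{(a-c)(b-d)}$ to share the common denominator $(a-c)(b-d)$ with the first, the identity reduces to checking the purely algebraic relation $(a-d)(b-c) + (a-b)(c-d) = (a-c)(b-d)$, which is a one-line expansion of each side.

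A more geometric alternative would be to invoke Lemma~\ref{lem:LiouvilleDeterminesBox} to reduce, via a M\"obius transformation, to a one-parameter family of boxes; the upper half plane model with endpoints $(a,b,c,d) = (0, t, 1, \infty)$ for some $t \in (0,1)$ is particularly convenient, and explicit integration of $\frac{dx\,dy}{(x-y)^2}$ over $Q$ and $Q^\perp$ gives $\E^{-L(Q)} = 1-t$ and $\E^{-L(Q^\perp)} = t$, from which the identity is immediate. Either route is essentially computational; there is no real obstacle beyond keeping careful track of the counterclockwise ordering of the four boundary points so that $Q^\perp$ is correctly identified as $[b,c]\times[d,a]$.
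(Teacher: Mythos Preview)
Your proof is correct and essentially identical to the paper's: both reduce to $\widetilde X=\D$, apply Lemma~\ref{lem:LiouvilleAndCrossratios} to $Q$ and $Q^\perp$, and verify the resulting crossratio identity $(a-d)(b-c)+(a-b)(c-d)=(a-c)(b-d)$. The upper-half-plane alternative you sketch is a pleasant extra but not needed.
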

\begin{proof}
 Using a biholomorphic map $\widetilde X \to \D$, we can assume without loss of generality that $\widetilde X=X=\D$. Then, for a box $Q= [a,b] \times [c,d] \subset G( \D) $, Lemma~\ref{lem:LiouvilleAndCrossratios}  gives
\begin{align*}
 \E^{-L_{\D}(Q)} + \E^{-L_{\D}(Q^\perp)} 
 &= \frac {(a-d)(b-c)}{(a-c)(b-d)} + \frac {(b-a)(c-d)}{(b-d)(c-a)}\\
 &=  \frac {(a-d)(b-c)-(b-a)(c-d)}{(a-c)(b-d)} =1. \qedhere
\end{align*}
\end{proof}

\subsection{Quasiconformal and quasisymmetric homeomorphisms}
\label{sect:QuasiconfQuasisym}
Consider a quasiconformal diffeomorphism $f\colon X_1 \to X_2$ between conformally hyperbolic Riemann surfaces, and lift it to a map $\widetilde f \colon \widetilde X_1 \to \widetilde X_2$ between their universal cover. We already mentioned the Beurling-Ahlfors Theorem, which says that $\widetilde f$ has a continuous extension $\widetilde f \colon \widetilde X_1 \cup \partial_\infty \widetilde X_1 \to \widetilde X_2 \cup \partial_\infty \widetilde X_2$ to the closed disks obtained by adding their circles at infinity to $\widetilde X_1$ and $\widetilde X_2$. 
The Beurling-Ahlfors Theorem additionally relates the quasiconformal properties of $\widetilde f \colon \widetilde X_1  \to \widetilde X_2$ to another regularity property for the boundary extension $\widetilde f \colon  \partial_\infty \widetilde X_1 \to  \partial_\infty \widetilde X_2$, as we now explain. 

A box $Q\subset G(\widetilde X_1)$ is \emph{symmetric} if its Liouville mass $L_{\widetilde X_1}(Q)$ is equal to $\log 2$. This property is better explained if we translate it to the disk by a biholomorphic map $\widetilde X_0 \to \D$. Indeed, Lemma~\ref{lem:LiouvilleDeterminesBox} shows that a box $Q\subset G(\D)$ is symmetric if and only if  it is  the image $\phi\big( [1,\I] \times [-1, -\I] \bigr)$ under a biholomorphic map $\phi \in \HHH(\D)$ of the ``standard'' box $ [1,\I] \times [-1, -\I]$ delimited by the points $1$, $\I$, $-1$, $-\I \in \partial\D$. Another characterization is provided by the combination of Lemmas~\ref{lem:LiouvilleDeterminesBox} and \ref{lem:LiouvilleMassOrthoBox}, which shows that a box $Q$ is symmetric if and only if there is a biholomorphic map of $\widetilde X_1$ sending $Q$ to the orthogonal box~$Q^\perp$. 

A homeomorphism  $ \widetilde f \colon  \partial_\infty \widetilde X_1 \to  \partial_\infty \widetilde X_2$ is \emph{quasisymmetric} if the supremum
$$
M(\widetilde f) = \sup_{Q \text{ symmetric}} \frac{L_{\widetilde X} \big( \widetilde f(Q) \big)}{\log 2},
$$
as $Q$ ranges over all symmetric boxes $Q\subset G(\widetilde X_1)$, is finite. By definition, $M(h)$ is the \emph{quasisymmetric constant} of $h$. 

Note that $M(\widetilde f)= 1$ when $\widetilde f$ comes from a biholomorphic map $\widetilde X_1 \to \widetilde X_2$, and that in general $M(\widetilde f)\geq 1$ by Lemma~\ref{lem:LiouvilleMassOrthoBox}. 

\begin{rem} The quasisymmetry property is sometimes stated in a different way, by restricting attention to  homeomorphisms $f\colon \R \to \R$ and by requiring that the supremum
$$
H(f) = \sup \{ {\textstyle \frac{|f(x+t)-f(x)|}{|f(x)-f(x-t)|}}; x,t \in \R \}
$$
be finite; to clarify the terminology, let us say that a homeomorphism $f\colon \R \to \R$ satisfying this property is \emph{weakly quasi-symmetric} (compare \cite{TV}). If we identify $\R \cup \{\infty\}$ to $S^1 = \partial \D$ by stereographic projection, a simple algebraic manipulation shows that $ \log(1+H(f)) \leq M(f)$. As a consequence, if the extension $\R \cup \{ \infty \} \to \R \cup \{ \infty \}$  of $f \colon \R \to \R$ is quasisymmetric, then $f$ is weakly quasisymmetric. 
A consequence of the proof \cite{BA} of the Beurling-Ahlfors Theorem~\ref{thm:Beurling-Ahlfors} stated below is that the converse holds, namely that the extension $\R \cup \{ \infty \} \to \R \cup \{ \infty \}$  of a homeomorphism $f \colon \R \to \R$ is quasisymmetric if and only if $f$ is weakly quasisymmetric. Indeed, that proof only uses the weak quasisymmetry property, whereas the boundary extension of a quasiconformal diffeomorphism is quasisymmetric. 
\end{rem}

The following fundamental result connects quasiconformal diffeomorphisms between Riemann surfaces and quasisymmetric homeomorphisms between their circles at infinity. 

\begin{thm}[Beurling-Ahlfors]
\label{thm:Beurling-Ahlfors}
 Let $\widetilde X_1$ and $\widetilde X_2$ be two simply connected conformally hyperbolic Riemann surfaces. Every quasiconformal diffeomorphism $\widetilde f \colon \widetilde X_1 \to \widetilde X_2$ admits a unique  extension to a homeomorphism $\widetilde X_1 \cup \partial_\infty \widetilde X_1\to \widetilde X_2  \cup \partial_\infty \widetilde X_2$, whose restriction $\widetilde f \colon \partial_\infty \widetilde X_1\to  \partial_\infty \widetilde X_2$ to the circles at infinity is quasisymmetric. In addition, the quasisymmetric constant $M(\widetilde f)$ of $\widetilde f \colon \partial_\infty \widetilde X_1\to  \partial_\infty \widetilde X_2$ tends to $1$ as the quasiconformal dilatation $K(\widetilde f)$ of $\widetilde f \colon \widetilde X_1 \to \widetilde X_2$ tends to $1$. 
 
 Conversely, every quasisymmetric homeomorphism $\widetilde f \colon \partial_\infty \widetilde X_1\to  \partial_\infty \widetilde X_2$ admits a continuous extension $\widetilde X_1 \cup \partial_\infty \widetilde X_1\to \widetilde X_2  \cup \partial_\infty \widetilde X_2$, whose restriction $\widetilde f \colon \widetilde X_1 \to \widetilde X_2$ is a quasiconformal diffeomorphism. In addition, the extension can be chosen so that the quasiconformal dilatation $K(\widetilde f)$ of $\widetilde f \colon \widetilde X_1 \to \widetilde X_2$  is bounded by a constant $K'(\widetilde f)$ depending only on the quasisymmetric constant $M(\widetilde f)$ of $\widetilde f \colon \partial_\infty \widetilde X_1\to  \partial_\infty \widetilde X_2$, and tending to $1$ as   $M(\widetilde f)$ tends to $1$.
\end{thm}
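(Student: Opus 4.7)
The plan is to reduce to the upper half-plane model and then invoke the classical two-directional argument. First, choose biholomorphic identifications $\widetilde X_i \cong \HH$ (upper half-plane) for $i=1,2$, which send the circles at infinity $\partial_\infty \widetilde X_i$ to $\widehat\R = \R \cup \{\infty\}$. Under these identifications, boxes of geodesics in $G(\widetilde X_i)$ correspond to pairs of disjoint intervals on $\widehat\R$, and by Lemma~\ref{lem:LiouvilleAndCrossratios} the Liouville mass of a box is the logarithm of the crossratio of its four endpoints. In particular, the quasisymmetric constant $M(\widetilde f)$ of the statement is bi-Lipschitz equivalent (up to additive constants near $1$) to the classical weak-quasisymmetry constant $H(\widetilde f)$ appearing in the remark preceding the theorem. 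Hence it suffices to prove the result in the model case, with the quantities $M$ and $H$ used interchangeably.

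For the forward direction, I would use the modulus-of-quadrilateral characterization of quasiconformality: a $K$-quasiconformal homeomorphism $\widetilde f\colon\HH\to\HH$ distorts the modulus of every quadrilateral by a factor at most $K$. The continuous extension to $\widehat\R$ follows by a standard shrinking-annulus argument: if $z_n\in \HH$ converges to $x\in \R$, then the moduli of topological annuli separating $z_n$ from $\infty$ in $\HH$ blow up, so the same holds for their $\widetilde f$--images, forcing $\widetilde f(z_n)$ to be Cauchy on $\widehat\R$. Uniqueness of the extension is automatic from continuity. For quasisymmetry, take a symmetric box (four cyclically ordered points on $\widehat\R$ of crossratio $2$), realize it as the vertex configuration of a quadrilateral in $\HH$ of explicit modulus, apply the $K$--distortion bound, and translate back via Lemma~\ref{lem:LiouvilleAndCrossratios} to a bound $M(\widetilde f)\leq M_0(K)$ with $M_0(K)\to 1$ as $K\to 1$.

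For the converse, I would use the explicit Beurling-Ahlfors formula. After conjugating by a M\"obius transformation, assume the quasisymmetric map $h\colon \widehat\R\to\widehat\R$ fixes $\infty$ and set
$$
\widetilde f(x+\I y) = \frac12\int_0^1\bigl(h(x+ty)+h(x-ty)\bigr)\,dt + \frac{\I}{2}\int_0^1\bigl(h(x+ty)-h(x-ty)\bigr)\,dt
$$
for $y>0$. A direct computation shows that $\widetilde f$ is a $C^1$ diffeomorphism of $\HH$ whose boundary values agree with $h$. Differentiating under the integral sign and performing the substitution $s=ty$ expresses the partial derivatives of $\widetilde f$ as weighted averages of difference quotients of $h$, each of which is controlled by $H(h)$. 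Combining these estimates produces a bound on the Beltrami coefficient $\mu_{\widetilde f} = \widetilde f_{\bar z}/\widetilde f_z$ of the form $\Vert\mu_{\widetilde f}\Vert_\infty \leq \eta\bigl(M(h)\bigr)$ with $\eta(M)\to 0$ as $M\to 1$, and consequently $K(\widetilde f)\leq K'\bigl(M(h)\bigr)$ with $K'(M)\to 1$ as $M\to 1$. Composing back with the initial M\"obius transformation restores the general case without changing either $M$ or $K$.

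The main obstacle is the quantitative assertion $K'(M)\to 1$ as $M\to 1$ in the converse direction. The raw Beurling-Ahlfors estimate yields only a polynomial bound of the type $K'\leq CM^2$, whereas the asymptotic statement requires tracking the leading-order behavior of the Jacobian and Beltrami coefficient of $\widetilde f$ as $h$ approaches a M\"obius map. This can be handled either by a careful Taylor expansion of the integrand in the formula above, or, more elegantly, by replacing the Beurling-Ahlfors formula with the Douady-Earle conformally natural extension, for which the required asymptotic follows from a compactness argument together with the fact that the extension of a M\"obius map is itself M\"obius. By comparison, the forward direction is essentially routine modulus-distortion bookkeeping once the theory of quasiconformal maps is in hand.
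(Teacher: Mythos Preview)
The paper does not give a proof of this theorem; it simply cites the original Beurling--Ahlfors paper \cite{BA} and the textbooks \cite[\S II.6]{LV}, \cite[\S16]{GL}. Your outline is essentially the classical argument contained in those references: reduce to the half-plane model, use modulus distortion of quadrilaterals for the forward direction, and the explicit Beurling--Ahlfors integral formula for the converse. So there is nothing to compare at the level of strategy.

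One minor remark: your worry that the raw Beurling--Ahlfors estimate might not give $K'(M)\to 1$ as $M\to 1$ is somewhat overstated. The original paper already establishes this asymptotic (the estimates on the partials are sharp enough near $M=1$); indeed the paper later notes, in the proof of Theorem~\ref{thm:BeurlingAhlforsImproved}, that the Douady--Earle bound is actually \emph{worse} than the Beurling--Ahlfors one, though still sufficient. So while invoking Douady--Earle is a valid alternative, it is not needed here, and your Taylor-expansion route through the explicit formula is closer to what the cited references actually do.
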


\begin{proof} See \cite{BA}, \cite[\S II.6]{LV} or \cite[\S16]{GL}, for instance. 
\end{proof}

Although the definition  of a quasisymmetric homeomorphism $\widetilde f \colon \partial_\infty \widetilde X_1\to  \partial_\infty \widetilde X_2$ involves only symmetric boxes, the quasisymmetry property actually controls the Liouville mass $L_{\widetilde X_2} \big( \widetilde f(Q) \big)$ for all boxes $Q\subset G(\widetilde X_1)$.

\begin{prop}
\label{prop:QuasiSymmetricEta}
If a  homeomorphism  $\widetilde f \colon  \partial_\infty \widetilde X_1 \to  \partial_\infty \widetilde X_2$ is quasisymmetric, there exists a homeomorphism $\omega\colon \left[0,\infty\right[ \to \left[0,\infty\right[$ depending only on the quasisymmetric constant $M(\widetilde f)$ such that 
 $$
  L_{\widetilde X_2} \big( \widetilde f(Q) \big) \leq \omega \big (  L_{\widetilde X_1} \big(Q) \big) 
 $$
 for every box $Q \subset G(\widetilde X_1)$. 
 
 In addition, the homeomorphism $\omega$ can be chosen so that it converges to the identity, uniformly on compact subsets of the open interval $\left]0,\infty\right[$, as the quasisymmetric constant $M(\widetilde f)$ tends to $1$. 
\end{prop}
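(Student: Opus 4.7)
The plan is to reduce to the unit disk, establish existence of a finite $\omega$ by a direct subdivision of $Q$ into symmetric sub-boxes, and then upgrade to the convergence-to-identity statement via a normal family / compactness argument based on the Beurling-Ahlfors Theorem~\ref{thm:Beurling-Ahlfors}.

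First I would use biholomorphic identifications $\widetilde X_1\cong\widetilde X_2\cong\D$, which preserve both the Liouville measure and the quasisymmetric constant $M(\widetilde f)$, to reduce to $\widetilde X_1=\widetilde X_2=\D$. In this setting Lemma~\ref{lem:LiouvilleAndCrossratios} expresses the Liouville mass $\ell:=L_\D(Q)$ of a box $Q=[a,b]\times[c,d]$ as $\log\mathrm{CR}(a,b;c,d)$, so the problem amounts to bounding the cross-ratio distortion of $\widetilde f$. For finiteness I would subdivide: if $\ell\leq\log 2$, enlarge $Q$ to a symmetric super-box $Q^*\supset Q$ by moving $d$ counterclockwise within $[d,a]$ until $L_\D(Q^*)=\log 2$ (possible by continuity and the fact that $L_\D([a,b]\times[c,d'])\to\infty$ as $d'\to a$); then $L_\D(\widetilde f(Q))\leq L_\D(\widetilde f(Q^*))\leq M(\widetilde f)\log 2$. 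If $\ell>\log 2$, I would partition $[c,d]$ into successive sub-intervals producing $n=\lfloor\ell/\log 2\rfloor$ pairwise disjoint symmetric sub-boxes plus one remainder of mass $<\log 2$, handled as in the first case. Additivity of $L_\D$ across these splits (a telescoping from Lemma~\ref{lem:LiouvilleAndCrossratios}) and the quasisymmetry hypothesis yield the crude bound
\[
L_\D(\widetilde f(Q))\leq(n+1)\,M(\widetilde f)\log 2\leq M(\widetilde f)\,\ell+M(\widetilde f)\log 2.
\]

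For the refined assertion that $\omega$ can be chosen to tend to the identity uniformly on compacts of $(0,\infty)$ as $M(\widetilde f)\to 1$, the crude bound is inadequate (it leaves an additive gap of $\log 2$ and does not even give $\omega(0)=0$). I would instead work with the optimal bound
\[
\omega^\star_M(\ell):=\sup\bigl\{L_\D(\widetilde f(Q)):L_\D(Q)=\ell,\,M(\widetilde f)\leq M\bigr\},
\]
finite by the previous step. Using Lemma~\ref{lem:LiouvilleDeterminesBox} to biholomorphically normalize $Q$ to a fixed reference box $Q_\ell$ and post-composing $\widetilde f$ with a Möbius transformation so that it fixes three prescribed points of $\partial\D$ (without changing $M(\widetilde f)$), the admissible family of $\widetilde f$'s becomes compact under uniform convergence on $\partial\D$: this is the standard normal-family property of quasiconformal maps, available via Theorem~\ref{thm:Beurling-Ahlfors} since each $\widetilde f$ extends to a quasiconformal self-map of $\D$ with dilatation bounded by some $K'(M(\widetilde f))$. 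Since $\widetilde f\mapsto L_\D(\widetilde f(Q_\ell))$ is continuous under uniform convergence on $\partial\D$, the supremum defining $\omega^\star_M(\ell)$ is attained; and as $M\to 1$ the compact family shrinks to $\{\mathrm{id}\}$ by Möbius rigidity (a $1$-quasisymmetric map extends to a biholomorphism of $\D$, hence is Möbius, hence becomes the identity once three boundary points are fixed). Therefore $\omega^\star_M(\ell)\to\ell$ pointwise, and monotonicity of $\omega^\star_M$ in $\ell$ (from $Q\subset Q'$ implying $\widetilde f(Q)\subset\widetilde f(Q')$), continuity in $\ell$, and a Dini-type argument promote this to uniform convergence on compacts of $(0,\infty)$ and make $\omega^\star_M$ a homeomorphism of $[0,\infty)$.

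The main obstacle is this last refinement: the elementary subdivision loses an additive $\log 2$ (because a generic box is not a disjoint union of symmetric boxes), and closing this gap together with establishing $\omega^\star_M(0)=0$ seems to require the qualitative compactness of Theorem~\ref{thm:Beurling-Ahlfors} combined with Möbius rigidity, rather than any purely combinatorial refinement of the subdivision.
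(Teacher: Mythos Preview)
Your argument is correct, but it takes a genuinely different route from the paper's. The paper invokes the \emph{conformal modulus} $\mu(Q)$ of a quadrilateral: there is a universal increasing homeomorphism $\eta\colon(0,\infty)\to(0,\infty)$ with $\mu(Q)=\eta\bigl(L_\D(Q)\bigr)$, and the classical quasiconformal distortion inequality $\mu\bigl(\widetilde f(Q)\bigr)\leq K(\widetilde f)\,\mu(Q)$ then gives $\omega(t)=\eta^{-1}\bigl(K'\bigl(M(\widetilde f)\bigr)\,\eta(t)\bigr)$ in one stroke. The convergence $\omega\to\mathrm{id}$ on compacta follows immediately from $K'(M)\to1$ as $M\to1$ in Theorem~\ref{thm:Beurling-Ahlfors}. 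This is short and yields an explicit formula.

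Your approach bypasses the conformal modulus entirely. The subdivision into symmetric sub-boxes establishes finiteness of $\omega$ using only the \emph{definition} of quasisymmetry, with no quasiconformal extension needed---a feature the paper's argument does not isolate. As you correctly diagnose, this crude bound cannot reach $\omega(0)=0$ or the sharp asymptotic, so Beurling--Ahlfors is still required for your refined $\omega^\star_M$; it enters through normal-family compactness rather than the modulus inequality. The trade-off: the paper's proof is a few lines once the modulus machinery is granted, while yours is more self-contained at the level of circle maps but needs the additional infrastructure of attained suprema, joint continuity in $(\widetilde g,\ell)$, and a Dini argument to close.
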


\begin{proof} Although there exists direct proofs of the first half of the statement (see for instance \cite{TV}), it is easier to use the full force of the Beurling-Ahlfors Theorem~\ref{thm:Beurling-Ahlfors}.  

In addition to its Liouville mass  $L_{\widetilde X_1} (Q) $, a box $Q=[a,b] \times [c,d]$ in $G(\widetilde X_1)$ has a more complex analytic invariant, its \emph{conformal modulus} $\mu_{\widetilde X_1}(Q)$. This is defined as the number $\mu= \mu_{\widetilde X_1}(Q)$ for which there exists a homeomorphism $\widetilde X_1 \cup \partial_\infty \widetilde X_1 \to [0,\mu] \times [0,1]$ that is conformal on $\widetilde X$ and sends the corners $a$, $b$, $c$, $d\in \partial_\infty \widetilde X$ of $Q$ to the corners $(0,0)$, $(\mu, 0)$, $(\mu, 1)$, $(0,1)$ of the rectangle $[0,\mu] \times [0,1] \subset \R^2$, respectively. 
These two invariants are classically related by an increasing homeomorphism $\eta \colon \left]0,\infty \right[ \to \left]0,\infty \right[ $ such that $\mu_{\widetilde X_1}(Q) = \eta \big(L_{\widetilde X_1} (Q) \big)$; indeed, these two quantities depend continuously on the corners $a$, $b$, $c$, $d$ of $Q$, they both increase as $Q$ gets larger,  they tend to $0$ as $Q$ gets arbitrarily small, and they tend to $+\infty$ as $Q$ gets arbitrarily large. 

Let $\widetilde f \colon  \widetilde X_1 \to   \widetilde X_2$ be the quasiconformal extension of $\widetilde f \colon  \partial_\infty \widetilde X_1 \to  \partial_\infty \widetilde X_2$  provided by Theorem~\ref{thm:Beurling-Ahlfors}. In particular, this quasiconformal extension  can be chosen so that its quasiconformal dilatation $K(\widetilde f)$ is bounded by a constant $K'(\widetilde f)$ depending only on the quasisymmetric constant $M(\widetilde f)$, and tending to 1 as   $M(\widetilde f)$ tends to 1. A  fundamental consequence of quasiconformality is that
$$
 \mu_{\widetilde X_2} \big( \widetilde f(Q) \big) \leq K(\widetilde f)  \, \mu_{\widetilde X_1} (Q);
 $$
 see for instance \cite{Ahl, LV}. 
 Proposition~\ref{prop:QuasiSymmetricEta} then holds for the homeomorphism $\omega$ defined by $\omega(t)= \eta^{-1} \big(K'(\widetilde f) \eta(t)\big)$. 
\end{proof}

An immediate consequence of Proposition~\ref{prop:QuasiSymmetricEta}  is that, if $\widetilde f \colon  \partial_\infty \widetilde X_1 \to  \partial_\infty \widetilde X_2$ is quasisymmetric, so is its inverse $\widetilde f^{-1} \colon  \partial_\infty \widetilde X_2 \to  \partial_\infty \widetilde X_1$.


We  now have the tools to prove  Lemma~\ref{lem:LiouvilleBounded}, a task which we had temporarily postponed. We rephrase this statement in the following way.

\begin{lem}
\label{lem:LiouvilleBounded2}
 Let $\widetilde f \colon \widetilde X_1 \to \widetilde X_2$ be a quasiconformal diffeomorphism between  two simply connected conformally hyperbolic Riemann surfaces. Then, for every continuous function $\xi \colon G(\widetilde X_1) \to \R$ with compact support, the supremum
 $$
 \sup_{\phi \in \HHH(\widetilde X_1)} \Big\vert \int_{G(\widetilde X_1)} \xi\circ \phi\, dL_{\widetilde f}  \Big\vert
 $$
 is finite, where the supremum is taken over all biholomorphic maps $\phi \colon \widetilde X_1 \to \widetilde X_1$ and where $L_{\widetilde f}$ is the pull back  under $\widetilde f$ of the Liouville measure $L_{\widetilde X_2}$ of $\widetilde X_2$. 
 \end{lem}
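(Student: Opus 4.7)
The plan is to reduce the boundedness of the supremum to a uniform bound on the $L_{\widetilde f}$--mass of the set $\phi^{-1}(K)$, where $K= \mathrm{supp}(\xi)$, and then to control this mass uniformly in $\phi$ by combining Lemma~\ref{lem:LiouvilleDeterminesBox} with Proposition~\ref{prop:QuasiSymmetricEta}.

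First I would note that $\xi$ is bounded, say by $C= \sup |\xi|$, and that $\xi\circ\phi$ is supported in $\phi^{-1}(K)$, so
$$
\Bigl| \int_{G(\widetilde X_1)} \xi\circ\phi \, dL_{\widetilde f} \Bigr| \leq C\, L_{\widetilde f}\bigl(\phi^{-1}(K)\bigr).
$$
It thus suffices to bound $L_{\widetilde f}\bigl(\phi^{-1}(K)\bigr)$ independently of $\phi\in \HHH(\widetilde X_1)$.

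Since $K$ is compact in the open annulus $G(\widetilde X_1)$, I would cover it by finitely many boxes of geodesics $Q_1, \dots, Q_n \subset G(\widetilde X_1)$ (as defined in \S \ref{subsect:GeodesicBoxes}). Each biholomorphic diffeomorphism $\phi\in \HHH(\widetilde X_1)$ extends to an orientation-preserving homeomorphism of $\widetilde X_1 \cup \partial_\infty \widetilde X_1$ which preserves cyclic order on $\partial_\infty \widetilde X_1$, and therefore sends each box of geodesics to another box of geodesics. Moreover, by Lemma~\ref{lem:LiouvilleDeterminesBox} this map preserves Liouville mass, so $L_{\widetilde X_1}\bigl(\phi^{-1}(Q_i)\bigr) = L_{\widetilde X_1}(Q_i)=:m_i$ for every $i$ and every $\phi$.

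Finally, the Beurling--Ahlfors Theorem~\ref{thm:Beurling-Ahlfors} guarantees that the boundary extension of $\widetilde f$ is quasisymmetric, so Proposition~\ref{prop:QuasiSymmetricEta} provides a homeomorphism $\omega \colon [0,\infty[ \to [0,\infty[$ (depending only on $M(\widetilde f)$) such that $L_{\widetilde X_2}\bigl(\widetilde f(Q)\bigr) \leq \omega\bigl(L_{\widetilde X_1}(Q)\bigr)$ for every box $Q$. Applying this to the boxes $\phi^{-1}(Q_i)$ gives
$$
L_{\widetilde f}\bigl(\phi^{-1}(K)\bigr) \leq \sum_{i=1}^n L_{\widetilde X_2}\bigl(\widetilde f(\phi^{-1}(Q_i))\bigr) \leq \sum_{i=1}^n \omega(m_i),
$$
which is finite and independent of $\phi$. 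Multiplying by $C$ completes the proof. I expect no serious obstacle: the only substantive point is the observation that $\HHH(\widetilde X_1)$ acts by box-preserving and mass-preserving transformations, after which everything reduces to previously established tools.
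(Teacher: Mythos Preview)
Your argument is correct and is essentially identical to the paper's proof: cover the support of $\xi$ by finitely many boxes, bound the integral by $\max|\xi|$ times the sum of the $L_{\widetilde f}$--masses of $\phi^{-1}(Q_i)$, and then use the $\HHH(\widetilde X_1)$--invariance of $L_{\widetilde X_1}$ together with Proposition~\ref{prop:QuasiSymmetricEta} to bound each term by $\omega\bigl(L_{\widetilde X_1}(Q_i)\bigr)$. The only cosmetic difference is that you explicitly spell out why $\phi^{-1}(Q_i)$ is again a box of the same Liouville mass, while the paper leaves this implicit.
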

\begin{proof}
Cover the support of $\xi$ by finitely many boxes $Q_1$, $Q_2$, \dots, $Q_k \subset G(\widetilde X_1)$. Then, for every $\phi \in \HHH(\widetilde X_1)$
\begin{align*}
\Big\vert \int_{G(\widetilde X_1)} \xi\circ \phi \, dL_{\widetilde f} \Big\vert 
&\leq \Big(  \max_{g \in G(\widetilde X_1)} | \xi(g)|\Big)  \sum_{i=1}^k L_{\widetilde f} \big(\phi^{-1}(Q_i)\big)\\
&\leq \Big(\max_{g \in G(\widetilde X_1)} | \xi(g)|\Big)  \sum_{i=1}^k L_{\widetilde X_2} \big(\widetilde f\circ \phi^{-1}(Q_i)\big).
\end{align*}
Since  $\widetilde f \colon \partial_\infty \widetilde X_1 \to \partial_\infty \widetilde X_2$ is quasisymmetric,  Proposition~\ref{prop:QuasiSymmetricEta} provides a function $\omega$ such that, for each box $Q_i \subset G(\widetilde X_1)$,
$$
L_{\widetilde X_2} \big(\widetilde f\circ \phi^{-1}(Q_i)\big) 
\leq \omega \big( L_{\widetilde X_1} (\phi^{-1}Q_i ) \big) = \omega \big( L_{\widetilde X_1} (Q_i ) \big).
$$
This gives the uniform bound requested. 
\end{proof}

Theorem~\ref{thm:Beurling-Ahlfors}  provides a correspondence between quasiconformal diffeomorphisms between simply connected Riemann surfaces and quasisymmetric homeomorphisms between their boundaries at infinity. We will need a slight improvement of this correspondence for maps between Riemann surfaces that are not simply connected.

Lift a quasiconformal map $f\colon X_1 \to X_2$ to a quasiconformal diffeomorphism $\widetilde f\colon \widetilde X_1 \to \widetilde X_2$ between universal covers, and consider the quasisymmetric extension $\widetilde f \colon \partial_\infty \widetilde X_1\to  \partial_\infty \widetilde X_2$ provided by the first part of Theorem~\ref{thm:Beurling-Ahlfors}. The quasisymmetry property is invariant under composition with biholomorphic maps of $\widetilde X_2$ (as these respect the Liouville measure $L_{\widetilde X_2}$).  It follows that the quasisymmetric constant $M(\widetilde f)$ is independent of the choice of the lift $\widetilde f\colon \widetilde X_1 \to \widetilde X_2$. We will refer to $M(\widetilde f)$ as \emph{the quasisymmetric constant} $M(f)$ of the quasiconformal map $f\colon X_1 \to X_2$.

The first part of Theorem~\ref{thm:Beurling-Ahlfors} indicates that this quasisymmetric constant $M(f)$ is close to 1 when the quasiconformal dilatation $K(f)$ is close to 1. We will need the following converse statement, which improves the second part of Theorem~\ref{thm:Beurling-Ahlfors} by ensuring that the quasiconformal extension $\widetilde f \colon  \widetilde X_1\to   \widetilde X_2$ comes from a quasiconformal diffeomorphism $ f \colon   X_1\to    X_2$.

\begin{thm}
\label{thm:BeurlingAhlforsImproved}

Let $f \colon X_1 \to X_2$ be a quasiconformal diffeomorphism between conformally hyperbolic Riemann surfaces, and let $M(f)$ be its quasisymmetric constant. Then, there is another quasiconformal diffeomorphism $f' \colon X_1 \to X_2$ that is bounded isotopic to $f$ and whose quasiconformal dilatation $K(f')$ is bounded by a constant depending only on the quasisymmetric constant $M(f)=M(f')$. In addition, $f'$ can be chosen so that its quasiconformal dilatation $K(f')$ tends to $1$ as the quasisymmetric constant $M(f)$ tends to $1$. 
 \end{thm}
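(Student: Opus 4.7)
The strategy is to replace the non-canonical Beurling--Ahlfors extension used in Theorem~\ref{thm:Beurling-Ahlfors} with the \emph{conformally natural} Douady--Earle extension. Its equivariance under biholomorphic diffeomorphisms will automatically guarantee that the new quasiconformal extension between the universal covers is equivariant for the fundamental group actions, and therefore descends to a map between the quotient Riemann surfaces.

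To carry this out, first lift $f$ to a quasiconformal diffeomorphism $\widetilde f \colon \widetilde X_1 \to \widetilde X_2$, and consider its quasisymmetric boundary extension $\widetilde f \colon \partial_\infty \widetilde X_1 \to \partial_\infty \widetilde X_2$, which has quasisymmetric constant $M(f)$ and which conjugates the action of $\pi_1(X_1)$ on $\partial_\infty \widetilde X_1$ to the action of $\pi_1(X_2)$ on $\partial_\infty \widetilde X_2$. Apply the Douady--Earle extension to this boundary homeomorphism to obtain $\widetilde{f'} \colon \widetilde X_1 \to \widetilde X_2$, quasiconformal with dilatation bounded by a function of $M(f)$ alone that tends to $1$ as $M(f) \to 1$. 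Its conformal naturality---the fact that $\widetilde{f'}$ commutes with pre- and post-composition by biholomorphic diffeomorphisms of the disks---implies that $\widetilde{f'}$ conjugates the two fundamental group actions in the same way as $\widetilde f$ does. It therefore descends to a quasiconformal diffeomorphism $f' \colon X_1 \to X_2$, with the required control on $K(f')$ and with $M(f') = M(f)$.

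It remains to verify that $f'$ is bounded isotopic to $f$, namely that $\widetilde h = \widetilde f^{-1} \circ \widetilde{f'} \colon \widetilde X_1 \to \widetilde X_1$ descends to a self-map of $X_1$ that is bounded isotopic to the identity. By construction $\widetilde h$ is $\pi_1(X_1)$-equivariant, $K$-quasiconformal with $K$ depending only on $M(f)$, and restricts to the identity on $\partial_\infty \widetilde X_1$. A compactness argument will provide the necessary uniform hyperbolic displacement bound: identifying $\widetilde X_1$ with $\D$, and for each $z\in \D$ choosing $\phi_z \in \HHH(\D)$ sending $0$ to $z$, the conjugate $\phi_z^{-1} \circ \widetilde h \circ \phi_z$ again lies in the family $\mathcal F_K$ of $K$-quasiconformal self-maps of $\D$ whose boundary extension is the identity, so the isometry property of $\phi_z$ reduces the estimate of $d_{\mathrm{hyp}}(z, \widetilde h(z))$ to the uniform bound on $d_{\mathrm{hyp}}(0, g(0))$ as $g$ ranges over $\mathcal F_K$; the latter is finite by the normality of $\mathcal F_K$ in the topology of uniform convergence on compacta. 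The equivariant geodesic interpolation between the identity and $\widetilde h$ then yields a bounded equivariant isotopy, which descends to the desired bounded isotopy on $X_1$.

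The main technical point is the uniform bound on $d_{\mathrm{hyp}}(z, \widetilde h(z))$; $\pi_1(X_1)$-equivariance alone does not suffice here since $X_1$ may be noncompact, and it is the richness of the full biholomorphic group $\HHH(\widetilde X_1)$ acting on $\widetilde X_1$---rather than just that of its discrete subgroup $\pi_1(X_1)$---that supplies the uniformity, very much in the spirit of the uniform weak* topology introduced in \S\ref{sect:GeodesicCurr}.
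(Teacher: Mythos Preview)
Your argument follows the same strategy as the paper's: apply the Douady--Earle extension to the boundary map $\widetilde f \colon \partial_\infty \widetilde X_1 \to \partial_\infty \widetilde X_2$, use its conformal naturality to descend the resulting quasiconformal map $\widetilde f'$ to $f' \colon X_1 \to X_2$, and invoke the Douady--Earle estimates for the control on $K(f')$ in terms of $M(f)$. The paper's proof is essentially identical up to this point.

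The one place where you diverge is in the justification that $f$ and $f'$ are bounded isotopic. The paper simply observes that $\widetilde f$ and $\widetilde f'$ have the same boundary extension and cites Earle--McMullen \cite{EM} for the conclusion. You instead give a direct argument: your uniform bound on $d_{\mathrm{hyp}}\big(z, \widetilde h(z)\big)$ via conjugation by $\phi_z \in \HHH(\D)$ and normality of the family $\mathcal F_K$ is correct and nicely explained. However, the final step---``the equivariant geodesic interpolation between the identity and $\widetilde h$ then yields a bounded equivariant isotopy''---is not quite justified as stated. Geodesic interpolation between the identity and a bounded-displacement homeomorphism is certainly a bounded equivariant \emph{homotopy}, but the time-$s$ maps need not be injective, so it is not automatically an \emph{isotopy}. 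This is exactly the technical point that \cite{EM} handles (and one reason the paper cites it rather than arguing directly): Earle and McMullen show that the various formulations---same boundary values of lifts, bounded homotopic, bounded isotopic through quasiconformal maps---are all equivalent. Your displacement bound already establishes one of these equivalent conditions, so the conclusion follows once you invoke \cite{EM}; but the geodesic interpolation by itself does not close the gap.
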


\begin{proof}
 As usual, lift $f$ to $\widetilde f \colon \widetilde X_1 \to \widetilde X_2$, and consider the quasisymmetric extension $\widetilde f \colon \partial_\infty \widetilde X_1\to  \partial_\infty \widetilde X_2$.  A fundamental construction of Douady-Earle \cite{DE} provides another continuous extension $\widetilde f' \colon \widetilde X_1 \cup \partial_\infty \widetilde X_1\to \widetilde X_2  \cup \partial_\infty \widetilde X_2$ of $\widetilde f \colon \partial_\infty \widetilde X_1\to  \partial_\infty \widetilde X_2$ such that $\widetilde f' \colon \widetilde X_1 \to \widetilde X_2$ is quasiconformal, which has the additional property that it is equivariant with respect to the action of the biholomorphic maps of $\widetilde X_1$ and $\widetilde X_2$. Namely, for every biholomorphic map $\phi_1 \in \HHH(\widetilde X_1)$ and $\phi_2 \in \HHH(\widetilde X_2)$, the Douady-Earle quasiconformal extension of $\phi_1\circ  \widetilde f\circ \phi_2 \colon  \partial_\infty \widetilde X_1\to  \partial_\infty \widetilde X_2$ is $\phi_1\circ \widetilde f'\circ \phi_2  \colon \widetilde X_1 \to \widetilde X_2$. In addition, we still have the property that the quasiconformal constant $K(\widetilde f')$ of the Douady-Earle extension  tends to $1$ as the quasisymmetric constant $M(\widetilde f)$ tends to 1 (although the bound is not as good as for the Beurling-Ahlfors Theorem). 
 
 Applying the equivariance property to the (biholomorphic) actions of the fundamental group $\pi_1(X_1)=\pi_1(X_2)$ on $\widetilde X_1$ and $\widetilde X_2$, it follows that $\widetilde f' \colon \widetilde X_1 \to \widetilde X_2$ descends to a quasiconformal map $f'\colon X_1 \to X_2$. By construction, $K(f') = K(\widetilde f')$ tends to 1 as $M(f)=M(\widetilde f)$ tends to 1. 
 
 By construction, the quasisymmetric extensions $\widetilde f$, $\widetilde f' \colon \partial_\infty \widetilde X_1\to  \partial_\infty \widetilde X_2$ of the quasiconformal maps $\widetilde f$, $\widetilde f' \colon \widetilde X_0 \to \widetilde X$ coincide. A result of Earle-McMullen \cite{EM} then shows that $f$ and $f'$ are bounded isotopic. 
 \end{proof}
 
 \subsection{The Liouville embedding $\LL \colon \T(X_0) \to \CC(X_0)$  is injective}  We are now ready to begin proving Theorem~\ref{thm:LiouvilleEmbedding}. We begin with the easier part. 
 
 \begin{prop}
\label{prop:LiouvilleInjective}
 The Liouville embedding $\LL \colon \T(X_0) \to \CC(X_0)$  is injective. 
\end{prop}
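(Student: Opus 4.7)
The plan is to show that the equality of two Liouville currents $L_{f_1}=L_{f_2}$ forces the boundary extensions of the lifts $\widetilde f_1$ and $\widetilde f_2$ to agree up to a M\"obius transformation, and then to translate this fact back into the required equivalence in $\T(X_0)$.

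First, fix representatives $f_i\colon X_0\to X_i$ of $[f_i]$ for $i=1,2$, together with lifts $\widetilde f_i\colon \widetilde X_0\to \widetilde X_i$ and their boundary extensions $\widetilde f_i\colon \partial_\infty\widetilde X_0\to \partial_\infty\widetilde X_i$ provided by the Beurling--Ahlfors Theorem~\ref{thm:Beurling-Ahlfors}. After choosing biholomorphic identifications $\widetilde X_i\cong \D$, the pull-back formula $L_{f_i}(Q)=L_{\widetilde X_i}(\widetilde f_i(Q))$ combined with Lemma~\ref{lem:LiouvilleAndCrossratios} identifies $L_{f_i}([a,b]\times [c,d])$ with the logarithm of the crossratio of the four points $\widetilde f_i(a),\widetilde f_i(b),\widetilde f_i(c),\widetilde f_i(d)\in\partial\D$. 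The hypothesis $L_{f_1}=L_{f_2}$ therefore says that the circle homeomorphism $h=\widetilde f_2\circ \widetilde f_1^{-1}\colon\partial\D\to\partial\D$ preserves the crossratio of every cyclically ordered quadruple of points of $\partial\D$.

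The next step is the classical fact that a self-homeomorphism of $\partial\D$ which preserves crossratios is the restriction of an element of $\HHH(\D)$. Indeed, three boundary points can be sent to any other three boundary points by a unique element of $\HHH(\D)$, and preservation of crossratios then forces agreement at every fourth point. Translating back through the identifications, we obtain a biholomorphic diffeomorphism $\widetilde g\colon \widetilde X_1\to \widetilde X_2$ whose boundary extension coincides with $\widetilde f_2\circ \widetilde f_1^{-1}$ on $\partial_\infty\widetilde X_1$. I then check equivariance: for each $\gamma\in\pi_1(X_0)$, with corresponding deck transformations $\gamma_i$ of $\widetilde X_i$, both $\widetilde g\circ\gamma_1$ and $\gamma_2\circ \widetilde g$ are biholomorphisms of $\widetilde X_1\to \widetilde X_2$, and they agree on $\partial_\infty\widetilde X_1$ because bounded isotopies fix the circle at infinity and because $\widetilde f_i$ conjugates the $\pi_1(X_0)$-action to the $\pi_1(X_i)$-action on the boundary. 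Hence $\widetilde g\circ\gamma_1=\gamma_2\circ \widetilde g$, and $\widetilde g$ descends to a biholomorphic diffeomorphism $g\colon X_1\to X_2$.

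Finally, the quasiconformal diffeomorphism $h=f_2^{-1}\circ g\circ f_1\colon X_0\to X_0$ admits a lift $\widetilde h=\widetilde f_2^{-1}\circ \widetilde g\circ \widetilde f_1$ whose boundary extension is, by construction, the identity on $\partial_\infty\widetilde X_0$. The Earle--McMullen criterion from \cite{EM} (cited after the definition of $\T(X_0)$ in \S\ref{sect:Teichmueller}) then yields that $h$ is bounded isotopic to the identity of $X_0$, so that $[f_1]=[f_2]$ in $\T(X_0)$. The only delicate point of this plan is the crossratio-to-M\"obius step and the verification that boundary equalities really do determine the relevant biholomorphisms; everything else is bookkeeping with the boundary extensions furnished by Theorem~\ref{thm:Beurling-Ahlfors} and the standard fact that a bounded isotopy fixes $\partial_\infty\widetilde X_0$.
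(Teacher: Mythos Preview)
Your proof is correct, but it follows a different path from the paper's. The paper observes that $L_{f_1}=L_{f_2}$ implies the quasisymmetric constant $M(\widetilde f_2\circ\widetilde f_1^{-1})$ equals $1$, and then invokes Theorem~\ref{thm:BeurlingAhlforsImproved} (the Douady--Earle extension) to conclude that $f_2\circ f_1^{-1}$ is bounded isotopic to maps whose quasiconformal dilatation is arbitrarily close to $1$, forcing $\dT([f_1],[f_2])=0$. You instead argue directly from crossratio preservation that $\widetilde f_2\circ\widetilde f_1^{-1}$ is M\"obius, check equivariance by hand to descend the resulting biholomorphism $\widetilde g$ to $g\colon X_1\to X_2$, and then appeal to \cite{EM} for the bounded isotopy. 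Your route is more elementary in that it avoids the Douady--Earle machinery and actually exhibits the biholomorphism witnessing $[f_1]=[f_2]$; the paper's route is shorter but leans on heavier analytic input packaged in Theorem~\ref{thm:BeurlingAhlforsImproved}. One small comment: your mention of bounded isotopies in the equivariance check is unnecessary---the equality $\widetilde g\circ\gamma_1=\gamma_2\circ\widetilde g$ follows immediately from the fact that both sides are M\"obius and agree on $\partial_\infty\widetilde X_1$, using only that the lifts $\widetilde f_i$ conjugate the deck actions.
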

\begin{proof}
 Suppose that $\LL\big( [f_1] \big) = \LL\big( [f_2] \big)$ for $[f_1]$, $[f_2] \in \T(X_0)$ represented by quasiconformal diffeomorphisms $f_1 \colon X_0 \to X_1$, $f_2 \colon X_0 \to X_2$. Lift $f_1$, $f_2$ to maps $\widetilde f_1 \colon \widetilde X_0 \to \widetilde X_1$, $\widetilde f_2 \colon \widetilde X_0 \to \widetilde X_2$ between universal covers, and consider the quasisymmetric extensions $\widetilde f_1 \colon \partial_\infty \widetilde X_0 \to \partial_\infty \widetilde X_1$, $\widetilde f_2 \colon\partial_\infty  \widetilde X_0 \to \partial_\infty \widetilde X_2$ provided by Theorem~\ref{thm:Beurling-Ahlfors}. 
 
 Since $\LL\big( [f_1] \big) = \LL\big( [f_2] \big)$, the homeomorphism $\widetilde f_2  \circ \widetilde f_1^{-1} \colon\partial_\infty  \widetilde X_1 \to \partial_\infty \widetilde X_2$ sends the Liouville measure $L_{\widetilde X_1}$ to $L_{\widetilde X_2}$. It follows that the quasisymmetric constant $M(\widetilde f_2  \circ \widetilde f_1^{-1}) = M( f_2  \circ  f_1^{-1})$ is equal to 1. By Theorem~\ref{thm:BeurlingAhlforsImproved}, it follows that $ f_2  \circ  f_1^{-1}$ is bounded isotopic to maps $g \colon X_1 \to X_2$ whose quasiconformal dilatation $K(g)$ is arbitrarily close to 1. This proves that the Teichm\"uller distance $\dT \big( [f_1], [f_2] \big)$ is equal to 0, so that $[f_1]=[f_2]$ in $\T(X_0)$ as required. 
 \end{proof}

\subsection{The Liouville embedding $\LL \colon \T(X_0) \to \CC(X_0)$  is continuous}  

We now prove a more substantial step in the proof of Theorem~\ref{thm:LiouvilleEmbedding}. 

\begin{prop}
\label{prop:LiouvilleContinuous}
 The Liouville embedding $\LL \colon \T(X_0) \to \CC(X_0)$  is continuous, for the Teichm\"uller topology on $\T(X_0)$ and the uniform weak* topology on $\CC(X_0)$. 
\end{prop}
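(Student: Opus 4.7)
The plan is to reduce uniform weak* convergence of Liouville currents to uniform convergence of the masses of boxes of geodesics, and then to exploit the quantitative control on quasisymmetric homeomorphisms provided by Proposition~\ref{prop:QuasiSymmetricEta}.

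First I would unpack the Teichm\"uller convergence $[f_n]\to[f]$. By the definition of $\dT$, one can choose representatives $f\colon X_0 \to X$, $f_n\colon X_0 \to X_n$ and quasiconformal diffeomorphisms $g_n\colon X \to X_n$ such that $f_n$ is bounded isotopic to $g_n\circ f$ and $K(g_n)\to 1$. Lifting to universal covers, the boundary extensions on $\partial_\infty \widetilde X_0$ satisfy $\widetilde f_n = \widetilde g_n \circ \widetilde f$, so that for every box $B \subset G(\widetilde X_0)$,
$$L_{f_n}(B) = L_{\widetilde X_n}\bigl(\widetilde g_n\widetilde f(B)\bigr) \qquad \text{and} \qquad L_f(B) = L_{\widetilde X}\bigl(\widetilde f(B)\bigr).$$
By Theorem~\ref{thm:Beurling-Ahlfors}, the quasisymmetric constant $M(g_n)$ tends to $1$, and applying Proposition~\ref{prop:QuasiSymmetricEta} to both $\widetilde g_n$ and $\widetilde g_n^{-1}$ yields a homeomorphism $\omega_n\colon \left[0,\infty\right[ \to \left[0,\infty\right[$ that depends only on $M(g_n)$, converges to the identity uniformly on compact subsets of $\left]0,\infty\right[$, and satisfies
$$\omega_n^{-1}\bigl(L_f(B)\bigr) \leq L_{f_n}(B) \leq \omega_n\bigl(L_f(B)\bigr)$$
for every box $B \subset G(\widetilde X_0)$.

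Next, I would fix a continuous $\xi\colon G(\widetilde X_0)\to\R$ with compact support and prove $\|L_{f_n}-L_f\|_\xi\to 0$. By Lemma~\ref{lem:LiouvilleBounded2}, there is a constant $C$ such that $L_f(\phi^{-1}(\mathrm{supp}(\xi)))\leq C$ for every $\phi \in \HHH(\widetilde X_0)$. Given $\epsilon>0$, cover $\mathrm{supp}(\xi)$ by essentially disjoint boxes $Q_1,\dots,Q_m$ on each of which the oscillation of $\xi$ is at most $\epsilon$, and let $\sigma=\sum_i c_i\chi_{Q_i}$ be an associated step function with $\|\xi-\sigma\|_\infty\leq\epsilon$. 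Then for every $\phi \in \HHH(\widetilde X_0)$,
$$\Bigl|\int \xi\circ\phi\,d(L_{f_n}-L_f)\Bigr| \leq \sum_{i=1}^m |c_i|\,\bigl|L_{f_n}(\phi^{-1}(Q_i))-L_f(\phi^{-1}(Q_i))\bigr| + \epsilon \bigl(L_{f_n}(\phi^{-1}(K)) + L_f(\phi^{-1}(K))\bigr),$$
with $K=\bigcup_i Q_i$. The box estimate bounds each term in the sum by $|c_i|\max\bigl(\omega_n(t)-t,\,t-\omega_n^{-1}(t)\bigr)$ with $t=L_f(\phi^{-1}(Q_i))\in[0,C]$, and the same estimate ensures $L_{f_n}(\phi^{-1}(K))\leq \omega_n(C)\leq 2C$ for $n$ large, so the remainder stays uniformly bounded.

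The key technical obstacle is that Proposition~\ref{prop:QuasiSymmetricEta} only guarantees uniform convergence of $\omega_n$ to the identity on compact subsets of $\left]0,\infty\right[$, not at the endpoint $0$. To deal with boxes of small $L_f$-mass, I would use that $\omega_n(\delta)\to\delta$ for any fixed $\delta>0$, so that for $n$ large one has $\omega_n(\delta)\leq 2\delta$ and therefore $L_{f_n}(\phi^{-1}(Q_i)),\,L_f(\phi^{-1}(Q_i))\in[0,2\delta]$ whenever $L_f(\phi^{-1}(Q_i))\leq\delta$; such boxes jointly contribute at most $2\delta\sum_i|c_i|$. Choosing $\delta$ small first, then $n$ large enough to apply uniform convergence of $\omega_n$ on $[\delta,C]$ to the remaining boxes, and finally letting $\epsilon\to 0$, yields $\|L_{f_n}-L_f\|_\xi\to 0$ and establishes continuity of $\LL$ for the uniform weak* topology.
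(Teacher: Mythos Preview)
Your approach is essentially the paper's: reduce to uniform convergence of box masses via Proposition~\ref{prop:QuasiSymmetricEta}, then pass to test functions by a step-function approximation. Two remarks. First, your ``key technical obstacle'' is illusory: for each fixed box $Q_i$, the Liouville mass $L_{\widetilde X_0}(\phi^{-1}(Q_i)) = L_{\widetilde X_0}(Q_i)$ is a positive constant independent of $\phi$, and applying Proposition~\ref{prop:QuasiSymmetricEta} to $\widetilde f^{-1}$ then shows $L_f(\phi^{-1}(Q_i))$ is bounded below by a positive constant depending only on $Q_i$ and $M(\widetilde f)$; thus the masses automatically live in a compact subset of $\left]0,\infty\right[$ and your $\delta$-splitting is unnecessary (this is exactly how the paper organizes Lemma~\ref{lem:ConvergenceLiouvilleBoxes}). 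Second, the bound $L_{f_n}(\phi^{-1}(K)) \leq \omega_n(C)$ is not justified since $K=\bigcup_i Q_i$ is not a box and the number $m$ of $Q_i$'s grows as $\epsilon\to 0$; the fix, as in the paper, is to first cover $\mathrm{supp}(\xi)$ by a \emph{fixed} finite family of boxes independent of $\epsilon$, and bound the remainder against the sum of their $L_{f_n}$-masses.
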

\begin{proof} The Teichm\"uller space is endowed with the topology defined by the Teichm\"uller metric $\dT$, and the uniform weak* topology on $\CC(X_0)$ is metrizable by  Lemma~\ref{lem:GeodCurrentSpaceMetrizable}.  It therefore suffices to show that, for every sequence $\big\{ [f_n] \big\}_{n \in \N}$  converging to $[f_\infty]$ in $\T(X_0)$, the sequence  of Liouville geodesic currents $ \LL\big([f_n]\big)=L_{f_n}$ converges to $\LL\big([f_\infty]\big)=L_{f_\infty}$ in $\CC(X_0)$ for the uniform weak* topology.
By definition of the uniform weak* topology, this means that 
$$
\sup_{\phi \in \HHH(\widetilde X_0)} 
\Bigl\vert\int_{G(\widetilde X_0)} \xi\circ \phi \, dL_{f_n}-
\int_{G(\widetilde X_0)} \xi\circ \phi \,dL_{f_\infty}
\Bigr\vert  \to 0 \text{ as } n\to \infty
$$
for every  continuous function $\xi \colon G(\widetilde X_0) \to \R$ with compact support. 

As a first step, we begin by proving a similar statement for boxes of geodesics in~$\widetilde X_0$. 
\begin{lem}
\label{lem:ConvergenceLiouvilleBoxes}
 For every box $Q\subset G(\widetilde X_0)$,
 $$
\sup_{\phi \in \HHH(\widetilde X_0)} 
\Bigl\vert L_{f_n} (\phi (Q) ) -L_{f_\infty}(\phi (Q))
\Bigr\vert  \to 0 \text{ as } n\to \infty.
$$
\end{lem}
\begin{proof} By definition of the Teichm\"uller topology, the classes $[f_n]$, $[f_\infty] \in \T(X_0)$ can be represented by quasiconformal maps $f_n \colon X_0 \to X_n$ and $f_\infty \colon X_0 \to X_\infty$ such that the quasiconformal constant $K(f_n \circ f_\infty^{-1})$ tends to 1 as $n$ tends to $\infty$. 

Lift $f_n$ and $f_\infty$ to quasiconformal maps $\widetilde f_n \colon \widetilde X_0 \to \widetilde X_n$ and $\widetilde f_\infty \colon \widetilde X_0 \to \widetilde X_\infty$, respectively, and consider their quasisymmetric extensions $\widetilde f_n \colon \partial_\infty \widetilde X_0 \to \partial_\infty  \widetilde X_n$ and $\widetilde f_\infty \colon \partial_\infty  \widetilde X_0 \to \partial_\infty  \widetilde X_\infty$ to the circles at infinity. 

A first observation is that, as $\phi \in \HHH(\widetilde X_0)$ ranges over all biholomorphic maps of $\widetilde X_0$, the Liouville mass $L_{\widetilde X_0}\big(\phi (Q) \big)$ is constant by invariance of the Liouville measure $L_{\widetilde X_0}$ under the action of  $\HHH(\widetilde X_0)$. Applying Proposition~\ref{prop:QuasiSymmetricEta} to the quasisymmetric maps $\widetilde f_\infty$ and $\widetilde f_\infty^{-1}$ then shows that $L_{\widetilde X_\infty}\big( \widetilde f_\infty(\phi (Q))\big)$ stays in a compact subset of the interval $\left]0,\infty\right[$, independent of $\phi \in \HHH(\widetilde X_0)$. 

Since the quasiconformal dilatation $K(\widetilde f_n \circ \widetilde f_\infty^{-1})= K(f_n \circ f_\infty^{-1})$ tends to 1, it follows from Theorem~\ref{thm:Beurling-Ahlfors} that the quasisymmetric constant  $M(\widetilde f_n \circ \widetilde f_\infty^{-1})$ of $\widetilde f_n \circ \widetilde f_\infty^{-1} \colon \partial_\infty  \widetilde X_\infty \to  \partial_\infty  \widetilde X_n$ tends to 1 as $n \to \infty$. By Proposition~\ref{prop:QuasiSymmetricEta} and using the property that $L_{\widetilde X_\infty} \big( \widetilde f_\infty(\phi (Q)) \big)$ is bounded away from $0$ and $\infty$, it follows that
$$
\limsup_{n\to \infty} \frac{L_{f_n}(\phi (Q)) }{L_{f_\infty} (\phi (Q)) }
=\limsup_{n\to \infty} \frac{L_{\widetilde X_n} \Big( \widetilde f_n\circ \widetilde f_\infty^{-1} \big(\widetilde f_\infty(\phi (Q)) \big)\Big) }{L_{\widetilde X_\infty} \big( \widetilde f_\infty(\phi (Q)) \big) }
 \leq 1,
$$
and this uniformly in $\phi \in \HHH(\widetilde X_0)$. 

Similarly, since $K(f_n) \leq K(f_n \circ f_\infty^{-1}) K(f_\infty)$, the maps $f_n \colon X_0 \to X_n$ are uniformly quasiconformal and, as above, the Liouville masses $L_{f_n} \big(\phi (Q) \big)= L_{\widetilde X_n} \big( \widetilde f_n(\phi (Q)) \big)$ stay bounded away from 0 and $\infty$. Replacing $\widetilde f_n \circ \widetilde f_\infty^{-1}$ by $\widetilde f_\infty \circ \widetilde f_n^{-1}$ in the argument above gives that 
$$
\limsup_{n\to \infty} \frac{L_{f_\infty}(\phi (Q)) }{L_{f_n} (\phi (Q)) }
=\limsup_{n\to \infty} \frac{L_{\widetilde X_\infty} \Big( \widetilde f_\infty\circ \widetilde f_n^{-1} \big(\widetilde f_n(\phi (Q)) \big)\Big) }{L_{\widetilde X_n} \big( \widetilde f_n(\phi (Q)) \big) }
 \leq 1,
$$
 uniformly in $\phi \in \HHH(\widetilde X_0)$.
 
 Therefore, 
 $$ 
 \lim_{n\to \infty} \frac{L_{f_n}(\phi (Q)) }{L_{f_\infty} (\phi (Q)) }=1
 $$
 uniformly in $\phi \in \HHH(\widetilde X_0)$. Since $L_{f_\infty} (\phi (Q))$ is uniformly bounded away from 0 and $\infty$, it follows that $L_{f_n} (\phi (Q))$ tends to $L_{f_\infty} (\phi (Q))$  as $n\to \infty$, and this uniformly in $\phi \in \HHH(\widetilde X_0)$.  This proves Lemma~\ref{lem:ConvergenceLiouvilleBoxes}. 
\end{proof}

We now return to the proof of Proposition~\ref{prop:LiouvilleContinuous}. Consider a continuous test function $\xi \colon G(\widetilde X_0) \to \R$ with compact support.

We begin by covering the support of $\xi$ by finitely many boxes $Q_1$, $Q_2$, \dots, $Q_m \subset G(\widetilde X_0)$. 

For a number $\epsilon_0>0$ to be specified later, we then cover the support of $\xi$ by finitely many boxes $Q_1'$, $Q_2'$, \dots, $Q_{m'}' \subset G(\widetilde X_0)$, contained in the union of the boxes $Q_i$ and small enough that
\begin{equation}
\label{eqn:Continuity0}
\bigl| \max_{x\in Q_i'}  \xi(x)-  \min_{x\in Q_i'} \xi(x) \bigr| < \epsilon_0.
\end{equation}
After subdividing these boxes $Q_i' = [a_i, b_i] \times [c_i, d_i]$, we can arrange that the boxes $Q_i'$ have disjoint interiors. We then approximate $\xi$ by the step function
$$
\sigma=\sum_{i=1}^{m'} \xi(x_i^*) \chi_{Q_i'}
$$
where $x_i^*$ is an arbitrary point of $Q_i'$ and where $\chi_{Q_i'}\colon G(\widetilde X_0) \to \R$ is the characteristic function of $Q_i'$. By construction, $|\xi-\sigma|\leq \epsilon_0$ except possibly on the boundary of the boxes $Q_i'$.

Then, for every $\phi \in \HHH(\widetilde X_0)$, 
\begin{equation}
\begin{split}
\label{eqn:Continuity1}
\Bigl\vert  \int_{G(\widetilde X_0)}( \xi \circ \phi - \sigma \circ \phi) \,&d(L_{f_n} - L_{f_\infty}) \Bigr\vert\\
&\leq \epsilon_0 \sum_{i=1}^{m'} \Bigl(L_{f_n}\bigl(\phi^{-1}(Q_i') \bigr)+ L_{f_\infty} \bigl( \phi^{-1}(Q_i') \bigr) \Bigr )
\\
&\leq \epsilon_0 \sum_{j=1}^{m} \Bigl(L_{f_n}\bigl(\phi^{-1}(Q_j) \bigr)+ L_{f_\infty} \bigl( \phi^{-1}(Q_j) \bigr) \Bigr )
\end{split}
\end{equation}
using the properties that the boundary of a box has Liouville measure 0 and that $\bigcup_{i=1}^{m'} Q_i'$ is contained in $\bigcup_{j=1}^m Q_j$.

Similarly, once we have chosen the boxes $Q_i'$ to approximate $\xi$ by a step function, Lemma~\ref{lem:ConvergenceLiouvilleBoxes} shows that
\begin{equation}
\begin{split}
 \label{eqn:Continuity2}
 \Bigl\vert  \int_{G(\D)}(  \sigma \circ \phi) \,d(L_{f_n}-&L_{f_\infty}) \Bigr\vert     \\
 &=  \Bigl\vert \sum_{i=1}^{m'} \xi\bigl( \phi(x_i^*) \bigr) \Bigl( L_{f_n} \bigl(\phi^{-1}(Q_i') \bigr)-L_{f_\infty}\bigl( \phi^{-1}(Q_i') \bigr) \Bigr ) \Bigr\vert
 \\
&\to 0 \text { as } n \to \infty,
\end{split}
\end{equation}
and this uniformly in $\phi \in \HHH(\widetilde X_0)$. 

Suppose that we are given $\epsilon>0$, and that we  have chosen the boxes $Q_j$ to cover the support of $\xi$. Once this choice is made, Lemma~\ref{lem:ConvergenceLiouvilleBoxes} then shows that the term
$$
\sum_{j=1}^{m} \Bigl(L_{f_n}\bigl(\phi^{-1}(Q_j) \bigr)+ L_{f_\infty} \bigl( \phi^{-1}(Q_j) \bigr) \Bigr )
$$
occurring on the last line of Equation~(\ref{eqn:Continuity1}) is uniformly bounded. We can therefore pick a number  $\epsilon_0>0$ so that the contribution of (\ref{eqn:Continuity1}) is less than $\epsilon/2$.  After choosing the boxes $Q_i'$ so that (\ref{eqn:Continuity0}) holds for this $\epsilon_0$, the contribution of (\ref{eqn:Continuity2}) will be less than $\epsilon/2$ for $n$ sufficiently large. Combining (\ref{eqn:Continuity1}) and (\ref{eqn:Continuity2}), we conclude that 
$$
\Bigl\vert\int_{G(\widetilde X_0)} \xi\circ \phi \ d(L_{f_\infty}-L_{f_\infty})
\Bigr\vert < \epsilon
$$
for $n$ sufficiently large, and this uniformly in $\phi \in \HHH(\widetilde X_0)$. This proves the continuity property of Proposition~\ref{prop:LiouvilleContinuous}. 
\end{proof}

\subsection{The inverse map $\LL^{-1} \colon \LL \bigl( \T(X_0) \bigr) \to \T(X_0)$ is continuous}

\begin{prop}
\label{prop:InverseLiouvilleContinuous}

 The inverse $\LL^{-1} \colon \LL \bigl( \T(X_0) \bigr) \to \T(X_0)$ of the Liouville embedding $\LL \colon \T(X_0) \to \CC(X_0)$ is continuous, for the Teichm\"uller topology on $\T(X_0)$ and for the uniform weak* topology on $\CC(X_0)$. 
\end{prop}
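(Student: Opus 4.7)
\emph{Plan.} Since the uniform weak* topology on $\CC(X_0)$ is metrizable by Lemma~\ref{lem:GeodCurrentSpaceMetrizable}, it suffices to establish sequential continuity. So, given a sequence $\{[f_n]\}_{n\in\N}$ with $\LL([f_n]) \to \LL([f_\infty])$ for the uniform weak* topology, the plan is to show that $\dT([f_n],[f_\infty]) \to 0$. Introducing $g_n = f_n \circ f_\infty^{-1} \colon X_\infty \to X_n$, Theorem~\ref{thm:BeurlingAhlforsImproved} reduces this to showing that the quasisymmetric constants $M(g_n)$ tend to $1$: it then provides bounded-isotopic representatives $g_n'$ of $g_n$ with $K(g_n') \to 1$, and precomposing with $f_\infty$ gives $\dT([f_n],[f_\infty]) \leq \tfrac12 \log K(g_n') \to 0$ (using that quasiconformal maps are quasi-isometries for the Poincar\'e metrics, so that a bounded isotopy in $X_\infty$ transports to one in $X_0$).

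Next I would rewrite $M(g_n)$ directly in terms of the Liouville currents appearing in the hypothesis. With $\widetilde h_n = \widetilde f_n \circ \widetilde f_\infty^{-1}$, the substitution $Q = \widetilde f_\infty^{-1}(R)$ identifies symmetric boxes $R \subset G(\widetilde X_\infty)$ with boxes $Q \subset G(\widetilde X_0)$ satisfying $L_{f_\infty}(Q) = L_{\widetilde X_\infty}(R) = \log 2$, while $L_{\widetilde X_n}(\widetilde h_n R) = L_{\widetilde X_n}(\widetilde f_n Q) = L_{f_n}(Q)$. Hence
$$
M(\widetilde h_n) = \sup_{Q \in \mathcal Q}\frac{L_{f_n}(Q)}{\log 2}, \qquad \mathcal Q = \bigl\{Q \subset G(\widetilde X_0) \text{ a box} : L_{f_\infty}(Q) = \log 2\bigr\}.
$$
Proposition~\ref{prop:QuasiSymmetricEta} applied both to $\widetilde f_\infty$ and to its (quasisymmetric) inverse then shows that every $Q \in \mathcal Q$ has $L_{\widetilde X_0}$-mass in a fixed compact interval $J \subset \left]0,\infty\right[$.

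The heart of the proof would be to show that $L_{f_n}(Q) \to \log 2$ uniformly in $Q \in \mathcal Q$. For each $m \in J$, Lemma~\ref{lem:LiouvilleDeterminesBox} writes any box of $L_{\widetilde X_0}$-mass $m$ as $\phi(Q_m)$ for some $\phi \in \HHH(\widetilde X_0)$, where $Q_m$ is a standard box depending continuously on $m$. I would sandwich $\chi_{Q_m}$ between compactly supported continuous functions $\xi^- \leq \chi_{Q_m} \leq \xi^+$ whose difference $\xi^+ - \xi^-$ is supported in a thin neighborhood $N$ of $\partial Q_m$. The uniform weak* hypothesis, applied with $\psi = \phi^{-1}$ in the defining supremum, then gives
$$
\sup_{\phi \in \HHH(\widetilde X_0)} \left|\int(\xi^\pm \circ \phi^{-1})\, d(L_{f_n} - L_{f_\infty})\right| \to 0 \text{ as } n \to \infty.
$$
The approximation error $L_{f_\infty}(\phi N)$ is controlled uniformly in $\phi$ by covering $N$ by finitely many small boxes $B$ and using $L_{f_\infty}(\phi B) = L_{\widetilde X_\infty}(\widetilde f_\infty \phi B) \leq \omega(L_{\widetilde X_0}(B))$ from Proposition~\ref{prop:QuasiSymmetricEta} together with the $\HHH(\widetilde X_0)$-invariance of $L_{\widetilde X_0}$.

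The main obstacle will be upgrading this fixed-$m$ uniformity to uniformity in $m \in J$ as well. The compactness of $J$ and the continuous dependence of $Q_m$ on $m$ permit a finite-cover argument: finitely many standard boxes $Q_{m_1}, \dots, Q_{m_k}$ and corresponding sandwiching functions $\xi^\pm_{m_j}$ can be chosen so that, for every $m \in J$, some pair $\xi^-_{m_j} \leq \chi_{Q_m} \leq \xi^+_{m_j}$ holds up to arbitrarily small $L_{\widetilde X_0}$-measure error on the offending region. Combining the resulting estimates yields $\sup_{Q \in \mathcal Q}|L_{f_n}(Q) - \log 2| \to 0$, whence $M(\widetilde h_n) \to 1$, which completes the argument.
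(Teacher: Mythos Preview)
Your proposal is correct and follows the same overall strategy as the paper: reduce to showing $M(f_n \circ f_\infty^{-1}) \to 1$, translate this into a uniform statement about $L_{f_n}(Q)$ over boxes $Q$ with $L_{f_\infty}(Q)=\log 2$, and control these via sandwiching by continuous test functions together with Proposition~\ref{prop:QuasiSymmetricEta}, then conclude with Theorem~\ref{thm:BeurlingAhlforsImproved}. The paper organizes the core step differently: it argues by contradiction, normalizes the offending boxes $Q_{n_k}$ by biholomorphisms $\phi_{n_k}$ so that three corners are fixed, extracts a subsequence in which the fourth corner converges, obtaining a limiting box $Q_\infty$, and then sandwiches $Q_\infty$ between nearby boxes $Q_\infty''\subset Q_\infty'$. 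In that argument the error term involves $L_{f_n}$ of the thin boundary region $\phi_{n_k}^{-1}(Q_\infty'\setminus Q_\infty'')$, which forces a preliminary Lemma~\ref{lem:UniformlyQuasisym} establishing that the $M(f_n)$ are uniformly bounded. Your direct approach, by contrast, arranges the sandwich so that the approximation error is measured only against $L_{f_\infty}$, and hence needs only the quasisymmetry of the single fixed map $\widetilde f_\infty$; this is slightly more economical for the present proposition, though the paper's Lemma~\ref{lem:UniformlyQuasisym} is reused later in the proof of Proposition~\ref{prop:LiouvilleClosed}.
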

\begin{proof} Consider an element $[f_\infty]$ and a 
sequence $\big\{[f_n]\big\}_{n \in \N}$ of elements of the Teichm\"uller space $\T(X_0)$ such that  the Liouville currents  $ L_{f_n}\in \CC(X_0)$ converge to $ L_{f_\infty}$ for the uniform weak* topology. We want to show that  $[f_n]$ converges to $[f_\infty] $ for the Teichm\"uller topology of $ \T(X_0)$.    

As usual, represent the class $[f_n]\in \T(X_0)$ by quasiconformal maps $f_n \colon X_0 \to X_n$, and consider their quasiconformal lifts $\widetilde f_n \colon  \widetilde X_0 \to \widetilde X_n$ and quasisymmetric extensions $\widetilde f_n \colon \partial_\infty \widetilde X_0 \to \partial_\infty \widetilde X_n$.

\begin{lem}
\label{lem:UniformlyQuasisym}
 The quasisymmetric constants $M(f_n)$ of the quasisymmetric maps $\widetilde f_n \colon \partial_\infty \widetilde X_0 \to \partial_\infty \widetilde X_n$ are uniformly bounded. 
\end{lem}

\begin{proof} We want to show that, as $Q\subset G(\widetilde X_0)$ ranges over all symmetric boxes in $\widetilde X_0$, the Liouville masses $L_{f_n}(Q)$ are uniformly bounded, independently of $n$ and $Q$. For this, choose a symmetric box $Q_0 \subset G(\widetilde X_0)$, and a test function $\xi \colon G(\widetilde X_0) \to \R$ with compact support such that $\xi\geq 1$ over the box $Q_0$. 

By definition of the uniform weak* topology, 
$$
\int_{G(\widetilde X_0)} \xi\circ \phi \, dL_{f_n} \to \int_{G(\widetilde X_0)} \xi\circ \phi \, dL_{f_\infty} \text{ as } n \to \infty
$$
uniformly over all biholomorphic maps $\phi \in \HHH(\widetilde X_0)$. The limit is uniformly bounded by Lemma~\ref{lem:LiouvilleBounded2}. It follows that  the integrals $\int_{G(\widetilde X_0)} \xi\circ \phi \, dL_{f_n} $ are  bounded by a constant $C$ independent of $n$ and $\phi \in \HHH(\widetilde X_0)$.

Every symmetric box $Q\subset G(\widetilde X_0)$ is of the form $\phi^{-1} (Q_0)$ for some $\phi \in \HHH(\widetilde X_0)$. Then, since $\xi\geq 1$ over  $Q_0$, 
$$
L_{\widetilde X_n}\big ( \widetilde f_n(Q) \big)=L_{f_n}( Q) = L_{f_n}(\phi^{-1} (Q_0)) \leq  \int_{G(\widetilde X_0)} \xi\circ \phi \, dL_{f_n}  \leq C
$$
so that the quasisymmetric constant $M( f_n)=M(\widetilde f_n)$ are bounded by $C/\log 2$.
\end{proof}

\begin{lem}
\label{lem:QuasisymTendsTo1}
 The quasisymmetric constant $M(f_n \circ f_\infty^{-1})$ converges to $1$ as $n$ tends to $\infty$. 
\end{lem}

\begin{proof} We will use a proof by contradiction. If the property does not hold, there exists an $\epsilon_0>0$ and a subsequence $\big\{ [f_{n_k}] \big\}_{k \in \N}$ such that $M(f_{n_k} \circ f_\infty^{-1})>1+\epsilon_0 $ for every $k$. (Recall that the quasisymmetric constant is always greater than or equal to 1). By definition of  the quasisymmetric constant, this means that there exists a symmetric box $Q_{n_k}'$ in $\widetilde X_\infty$ such that $L_{\widetilde X_{n_k}}\big(\widetilde f_{n_k} \circ \widetilde f_\infty^{-1} (Q_{n_k}') \big) > (1+\epsilon_0)\log 2$. We then have a box $Q_{n_k} = \widetilde f_\infty^{-1}(Q_{n_k}') \subset G(\widetilde X_0)$ such that $L_{f_\infty}(Q_{n_k}) = \log 2$ and $L_{f_{n_k}}(Q_{n_k}) > (1+\epsilon_0)\log 2$.

Fix three points $a_0$, $b_0$, $c_0 \in \partial_\infty \widetilde X_0$, counterclockwise in this order. Then, there exists a biholomorphic map $\phi_{n_k} \in \HHH(\widetilde X_0)$ such that the box $\phi_{n_k}( Q_{n_k})$ is of the form $[a_0, b_0] \times [c_0, d_{n_k}]$ for some point $d_{n_k}$ in the open interval $\left] c_0, a_0 \right[ \subset \partial_\infty \widetilde X_0$. 

Since $\widetilde f_\infty \colon \widetilde X_0 \to \widetilde X_\infty$ is quasisymmetric and $L_{f_\infty}(Q_{n_k}) = \log 2$, Proposition~\ref{prop:QuasiSymmetricEta} shows that the Liouville mass $L_{\widetilde X_0}(\phi_{n_k}( Q_{n_k}))= L_{\widetilde X_0}(Q_{n_k})$ is bounded between two positive constants. It then follows from Lemma~\ref{lem:LiouvilleAndCrossratios} that the point $d_{n_k}$ stays within a compact subset of the  interval $\left]c_0, a_0\right[$. Refining the subsequence if necessary, we can therefore assume that $d_{n_k}$ converge to some point $d_\infty \in \left]c_0, a_0\right[$ as $k$ tends to $\infty$. In other words, the box  $\phi_{n_k} (Q_{n_k})$ converge to the box $Q_\infty = [a_0, b_0] \times [c_0, d_\infty]$  as $k$ tends to $\infty$. 

For an $\epsilon>0$ to be specified later, choose  intervals $\left]a_0', a_0''\right[ $, $\left]b_0'', b_0'\right[ $, $\left]c_0', c_0''\right[ $ and $\left]d_\infty'', d_\infty '\right[  \subset \partial_\infty \widetilde X_0 $ respectively containing the points $a_0$, $b_0$, $c_0$, $d_\infty $, and small enough that the following property holds. The box $Q_\infty$ is contained in  $Q_\infty'= [a_0', b_0'] \times [c_0', d_\infty']$ and contains $Q_\infty''= [a_0'', b_0''] \times [c_0'', d_\infty'']$. By Lemma~\ref{lem:UniformlyQuasisym}, the maps $\widetilde f_n  \colon \widetilde X_0 \to \widetilde X_n$ are uniformly quasisymmetric. Therefore, noting that the closure of $Q_\infty' - Q_\infty''$ is  the union of the four boxes $[a_0', b_0'] \times [c_0', c_0'']$, $[a_0', b_0'] \times [d_\infty'', d_\infty']$, $[a_0', a_0''] \times [c_0', d_\infty']$ and $[b_0'', b_0'] \times [c_0', d_\infty']$, we can use Proposition~\ref{prop:QuasiSymmetricEta} to choose the intervals $\left]a_0', a_0''\right[ $, $\left]b_0'', b_0'\right[ $, $\left]c_0', c_0''\right[ $ and $\left]d_\infty'', d_\infty '\right[ $  small enough that
\begin{equation}
\begin{split}
\label{eqn:QuasiSymTendsTo1:1}
  L_{f_n} \big( \phi( Q_\infty' - Q_\infty'' )\big) &< \epsilon
  \\
  \text{and }  L_{f_\infty} \big( \phi( Q_\infty' - Q_\infty'' )\big) &< \epsilon
\end{split}
\end{equation}
for every $n$ and every $\phi \in \HHH(\widetilde X_0)$.

By construction, $Q_\infty$ is contained in the interior of $Q_\infty'$, and contains $Q_\infty''$ in its interior. Let  $\xi \colon G(\widetilde X_0) \to [0,1]$ be a continuous  test function that is identically 1 on the box $Q_\infty''$ and 0 outside of $Q_\infty'$.  For $k$ large enough, the box $\phi_{n_k}(Q_{n_k}) $ is  very close to $Q_\infty$ and therefore $Q_{\infty}'' \subset \phi_{n_k}(Q_{n_k})  \subset Q_\infty'$. As a consequence, $ \chi_{\phi_{n_k}^{-1}(Q_\infty'')} \leq \xi\circ \phi_{n_k} \leq  \chi_{\phi_{n_k}^{-1}(Q_\infty')} $ and $ \chi_{\phi_{n_k}^{-1}(Q_\infty'')} \leq  \chi_{Q_{n_k}} \leq  \chi_{\phi_{n_k}^{-1}(Q_\infty')} $  if $\chi_A \colon G(\widetilde X_0) \to \{0,1\}$ denotes the characteristic function of the subset $A\subset G(\widetilde X_0) $. It follows that for $k$ sufficiently large
\begin{align*}
 \Big|    \int_{G(\widetilde X_0)} \xi\circ \phi_{n_k} \, dL_{f_{n_k}} 
 - L_{f_{n_k}}( Q_{n_k} ) \Big|
  \leq L_{f_{n_k}}\big( \phi_{n_k} ^{-1}( Q_\infty' - Q_\infty'' ) \big) 
 <\epsilon
\end{align*}
by (\ref{eqn:QuasiSymTendsTo1:1}), and
\begin{equation}
\begin{split}
\label{eqn:QuasiSymTendsTo1:2}
 \int_{G(\widetilde X_0)} \xi\circ \phi_{n_k} \, dL_{f_{n_k}}
  &> L_{f_{n_k}}( Q_{n_k} ) -\epsilon \\
  &> \log2 +\epsilon_0\log 2 -\epsilon
\end{split}
\end{equation}
since the boxes $Q_{n_k}$ were chosen so that $L_{f_n}(Q_{n_k}) > (1+\epsilon_0)\log 2$.

Similarly, 
\begin{align*}
 \Big|    \int_{G(\widetilde X_0)} \xi\circ \phi_{n_k} \, dL_{f_\infty} 
 - L_{f_\infty}( Q_{n_k} ) \Big|
  \leq L_{f_\infty}\big( \phi_{n_k} ^{-1}( Q_\infty' - Q_\infty'' ) \big) 
 <\epsilon
\end{align*}
and
\begin{equation}
\begin{split}
\label{eqn:QuasiSymTendsTo1:3}
 \int_{G(\widetilde X_0)} \xi\circ \phi_{n_k} \, dL_{f_\infty}
  &< L_{f_{\infty}}( Q_{n_k} ) +\epsilon \\
  &< \log2 + \epsilon
\end{split}
\end{equation}
since $L_{f_\infty}(Q_{n_k}) = \log 2$. 

But, if we had chosen $\epsilon>0$ small enough that $2\epsilon< \epsilon_0\log 2$, the inequalities (\ref{eqn:QuasiSymTendsTo1:2}) and (\ref{eqn:QuasiSymTendsTo1:3}) are incompatible with the fact that 
$$
 \int_{G(\widetilde X_0)} \xi\circ \phi_{n_k} \, dL_{f_{n_k}} 
 \to 
  \int_{G(\widetilde X_0)} \xi\circ \phi_{n_k} \, dL_{f_\infty} 
  \text{ as } k \to \infty
  $$
  by uniform weak* convergence of $L_{f_{n_k}}$ to $L_{f_\infty}$. This contradiction proves Lemma~\ref{lem:QuasisymTendsTo1}. 
\end{proof}

By the property of Lemma~\ref{lem:QuasisymTendsTo1}, Theorem~\ref{thm:BeurlingAhlforsImproved} then shows that $[f_n] \in \T(X_0)$ converges to $[f_\infty]$ for the Teichm\"uller metric. This completes the proof of Proposition~\ref{prop:InverseLiouvilleContinuous}. 
\end{proof}

\subsection{The image $ \LL \bigl( \T(X_0) \bigr)$ of the Liouville embedding is closed}

\begin{prop}
\label{prop:LiouvilleClosed}
The image $ \LL \bigl( \T(X_0) \bigr)$ of the Liouville embedding $\LL \colon \T(X_0) \to \CC(X_0)$ is closed in the space $\CC(X_0)$ of bounded geodesic currents. 
\end{prop}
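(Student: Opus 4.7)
Suppose $[f_n]\in\T(X_0)$ is a sequence with $L_{f_n}=\LL([f_n])$ converging to some $\alpha\in\CC(X_0)$ for the uniform weak* topology; I must exhibit $[f_\infty]\in\T(X_0)$ with $\LL([f_\infty])=\alpha$. Fix biholomorphic identifications $\widetilde X_0\cong\D$ and $\widetilde X_n\cong\D$, so the lifts $\widetilde f_n\colon\D\to\D$ have quasisymmetric boundary extensions $\widetilde f_n\colon\partial\D\to\partial\D$ and conjugate the Fuchsian deck representation $\rho_0$ of $\pi_1(X_0)$ to the deck representation $\rho_n$ of $\pi_1(X_n)$. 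Running the argument of Lemma~\ref{lem:UniformlyQuasisym} verbatim, with the bounded current $\alpha$ in place of $L_{f_\infty}$, produces a uniform bound on the quasisymmetric constants $M(\widetilde f_n)$. Post-composing each $\widetilde f_n$ with a suitable $g_n\in\HHH(\D)$---a change of identification $\widetilde X_n\cong\D$ that leaves $L_{f_n}$ invariant---I normalize so that every $\widetilde f_n$ fixes three prescribed points of $\partial\D$. The classical fact that uniformly quasisymmetric homeomorphisms of $\partial\D$ pinned at three points form a sequentially compact family under uniform convergence then yields, via Arzelà--Ascoli, a subsequence converging uniformly on $\partial\D$ to a quasisymmetric homeomorphism $\widetilde f_\infty\colon\partial\D\to\partial\D$.

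Next, I identify $\alpha$ with the pullback $\widetilde f_\infty^\ast L_\D$. For every box $Q=[a,b]\times[c,d]\subset G(\D)$, Lemma~\ref{lem:LiouvilleAndCrossratios} expresses $L_{f_n}(Q)=L_\D(\widetilde f_n(Q))$ as the logarithm of the crossratio of the corners of $\widetilde f_n(Q)$; continuity of the crossratio together with uniform convergence $\widetilde f_n\to\widetilde f_\infty$ on $\partial\D$ give $L_{f_n}(Q)\to L_\D(\widetilde f_\infty(Q))$. Approximating a continuous compactly supported test function $\xi\colon G(\widetilde X_0)\to\R$ by step functions on finitely many pairwise disjoint boxes, exactly as in the proof of Proposition~\ref{prop:LiouvilleContinuous}, upgrades this to
$$
\int_{G(\widetilde X_0)} \xi\,dL_{f_n}\;\longrightarrow\;\int_{G(\widetilde X_0)}\xi\,d\bigl(\widetilde f_\infty^\ast L_\D\bigr).
$$
Uniform weak* convergence implies weak* convergence, so the same integral also tends to $\int\xi\,d\alpha$, forcing $\alpha=\widetilde f_\infty^\ast L_\D$ as measures on $G(\widetilde X_0)$.

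It remains to promote $\widetilde f_\infty$ to a Teichmüller class. The $\pi_1(X_0)$-invariance of $\alpha$ translates to the statement that, for every $\gamma\in\pi_1(X_0)$, the boundary homeomorphism $h_\gamma:=\widetilde f_\infty\rho_0(\gamma)\widetilde f_\infty^{-1}$ preserves the Liouville mass of every box. By Lemma~\ref{lem:LiouvilleDeterminesBox} combined with the triple transitivity of $\HHH(\D)=\mathrm{PSL}_2(\R)$ on $\partial\D$, any boundary homeomorphism with this property is itself the boundary of a Möbius transformation, so $\rho_\infty(\gamma):=h_\gamma$ defines a homomorphism $\rho_\infty\colon\pi_1(X_0)\to\HHH(\D)$. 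Injectivity is immediate; discreteness follows because a putative $\rho_\infty(\gamma_k)\to\mathrm{id}$ in $\HHH(\D)$ with distinct nontrivial $\gamma_k$ would, by uniform convergence on $\partial\D$ and conjugation by $\widetilde f_\infty^{-1}$, force $\rho_0(\gamma_k)$ to tend pointwise to the nonconstant map $\mathrm{id}$, hence to converge to $\mathrm{id}$ in $\HHH(\D)$, contradicting discreteness of $\rho_0$. Thus $X_\infty:=\D/\rho_\infty(\pi_1(X_0))$ is a Riemann surface, and the Douady--Earle equivariant extension invoked in the proof of Theorem~\ref{thm:BeurlingAhlforsImproved} promotes $\widetilde f_\infty$ to a $\rho_0$--$\rho_\infty$-equivariant quasiconformal map $\widetilde f_\infty\colon\D\to\D$ descending to $f_\infty\colon X_0\to X_\infty$; its class $[f_\infty]\in\T(X_0)$ satisfies $\LL([f_\infty])=\widetilde f_\infty^\ast L_\D=\alpha$ by the previous step.

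The main obstacle is the final paragraph: recovering from the mere boundary homeomorphism $\widetilde f_\infty$ a Fuchsian representation $\rho_\infty$, and hence an honest Riemann surface quasiconformally equivalent to $X_0$. The two structural ingredients that make this possible are the crossratio rigidity characterizing Möbius transformations among boundary homeomorphisms preserving the Liouville mass of every box, and the equivariance built into the Douady--Earle extension which ensures the resulting quasiconformal map descends to the quotient Riemann surfaces.
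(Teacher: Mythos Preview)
Your proof is correct and follows essentially the same architecture as the paper's: uniform bound on the quasisymmetric constants via the argument of Lemma~\ref{lem:UniformlyQuasisym}, normalization at three boundary points, Arzel\`a--Ascoli to extract a limiting quasisymmetric map $\widetilde f_\infty$, identification of $\alpha$ with $\widetilde f_\infty^\ast L_\D$, and Douady--Earle to descend to a quasiconformal $f_\infty\colon X_0\to X_\infty$.

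The one genuine difference is how you show that $h_\gamma=\widetilde f_\infty\rho_0(\gamma)\widetilde f_\infty^{-1}$ is M\"obius. The paper observes directly that each normalized $\widetilde f_{n_k}$ already conjugates the deck group $\rho_0$ into $\HHH(\D)$, so $h_\gamma$ is the uniform limit of M\"obius maps $\widetilde f_{n_k}\rho_0(\gamma)\widetilde f_{n_k}^{-1}$ and hence M\"obius; this is shorter and avoids any appeal to the $\pi_1$-invariance of $\alpha$. Your route via crossratio rigidity (Lemma~\ref{lem:LiouvilleDeterminesBox} plus triple transitivity) is equally valid and arguably more intrinsic, since it uses only the limiting current rather than the approximating sequence. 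You also supply an explicit discreteness argument for $\rho_\infty$, which the paper asserts without justification; this is a small but genuine improvement in rigor.
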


\begin{proof} As before, the metrizability property of Lemma~\ref{lem:GeodCurrentSpaceMetrizable} enables us to argue in terms of sequences. 
Let $[f_n] \in \T(X_0)$ be a sequence in the Teichm\"uller space such that the associated Liouville geodesic currents $\LL\big([f_n] \big)=L_{f_n}$ converge to some geodesic current $\alpha_\infty \in \CC(X_0)$. We want to show that $\alpha_\infty$ is also in the image $ \LL \bigl( \T(X_0) \bigr)$. 

As usual, lift the quasiconformal diffeomorphisms $f_n \colon X_0 \to X_n$ to maps $\widetilde f_n \colon \widetilde X_0 \to \widetilde X_n$ between universal covers, and consider the quasisymmetric extension $\widetilde f_n \colon \partial_\infty \widetilde X_0 \to  \partial_\infty \widetilde X_n$. Because the Liouville geodesic currents $L_{f_n}$ converge to $\alpha_\infty$ for the uniform weak* topology and because the limit $\alpha_\infty$ is  bounded, the argument that we already used in the proof of Lemma~\ref{lem:UniformlyQuasisym} shows that the quasisymmetric constants $M(\widetilde f_n)$ are uniformly bounded.

Fix three points $a_0$, $b_0$, $c_0$ in this order in the circle at infinity $\partial_\infty \widetilde X_0$. Then, there is a unique biholomorphic map $\widetilde g_n \colon \widetilde X_n \to \D$ sending $\widetilde f_n( a_0)$ to $1$, $\widetilde f_n( b_0)$ to $\I$ and $\widetilde f_n( c_0)$ to $-1$. The maps $\widetilde g_n \circ \widetilde f_n \colon\partial_\infty  \widetilde X_0 \to\partial \D$ are uniformly quasisymmetric, and send the three points $a_0$, $b_0$, $c_0$ to the fixed points $1$, $\I$, $-1$. It easily follows that these maps $\widetilde g_n \circ \widetilde f_n$ are equicontinuous, so that we can extract a subsequence $\widetilde g_{n_k} \circ \widetilde f_{n_k}$ that converges to a homeomorphism $\widetilde f_\infty \colon \partial_\infty \widetilde X_0 \to \partial \D$ for the topology of uniform convergence (see for instance \cite[\S II.5]{LV} or \cite[\S16]{GL}).

By uniform quasisymmetry of the $\widetilde f_n$, the limit $\widetilde f_\infty$ is quasisymmetric. Also, if $\phi \colon \widetilde X_0 \to \widetilde X_0$ is the biholomorphic map of $\widetilde X_0$ defined by an element $\phi \in \pi_1(X_0)$ of the fundamental group, $ \widetilde f_\infty \circ \phi \circ\widetilde f_\infty^{-1} = \lim_{k\to \infty}  \widetilde f_{n_k} \circ \phi \circ\widetilde f_{n_k}^{-1} $ is a linear fractional map that is the restriction to $\partial\D$  of a biholomorphic map of $\D$. As $\phi$ ranges over all elements of $\pi_1(X_0)$, these $ \widetilde f_\infty \circ \phi \circ\widetilde f_\infty^{-1} $ define a discrete biholomorphic action of $\pi_1(X_0)$ on $\D$, and we can consider the Riemann surface $X_\infty = \D/\pi_1(X_0)$. 

The Douady-Earle Extension Theorem \cite{DE} (see also our proof of Theorem~\ref{thm:BeurlingAhlforsImproved}) then provides a quasiconformal extension $\widetilde f_\infty \colon \widetilde X_0 \to \D$ of $f_\infty \colon \partial_\infty \widetilde X_0 \to \partial \D$ that commutes with the actions of $\pi_1(X_0)$ on $\widetilde X_0$ and $\D$, and therefore descends to a quasiconformal map $f_\infty \colon X_0 \to X_\infty =  \D/\pi_1(X_0)$.

The uniform convergence of $\widetilde g_{n_k} \circ \widetilde f_{n_k}$ to $\widetilde f_\infty$ as $k \to \infty$ does not imply that $[f_{n_k}]\in \T(X_0)$ necessarily converges to $[f_\infty]$ for the Teichm\"uller topology. However, it is enough to guarantee that the pullback $L_{f_\infty}$ of the Liouville measure $L_\D$ by $\widetilde f_\infty$ is the weak* limit of the pullback of $L_\D$ by $\widetilde g_{n_k} \circ \widetilde f_{n_k}$, which also is the pullback $L_{f_{n_k}}$ of $L_{\widetilde X_{n_k}}$ by $\widetilde f_{n_k}$. Therefore $\alpha_\infty \in \CC(X_0)$, which was defined as the uniform weak* limit of the Liouville geodesic currents $L_{f_n}$, is equal to $L_{f_\infty}= \LL\big( [f_\infty]\big)$. In particular, $\alpha_\infty$ is in the image of $\LL$, as requested. 
\end{proof}

 \subsection{The Liouville embedding is proper}
 
\begin{prop}
\label{prop:LiouvilleProper}
 The Liouville embedding $\LL \colon \T(X_0) \to \CC(X_0)$ is proper. 
\end{prop}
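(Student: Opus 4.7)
The plan is to reverse the box-counting argument of Lemma~\ref{lem:UniformlyQuasisym} and feed the resulting quasisymmetric bound into the improved Beurling-Ahlfors Theorem~\ref{thm:BeurlingAhlforsImproved}, thereby converting a uniform weak* bound on Liouville currents into a uniform Teichm\"uller distance bound. Let $\mathcal S \subset \CC(X_0)$ be a bounded set, meaning that every seminorm $\Vert\cdot\Vert_\xi$ is bounded on $\mathcal S$. I will show that $\LL^{-1}(\mathcal S)$ has finite diameter for $\dT$, using $[\mathrm{id}_{X_0}] \in \T(X_0)$ as a basepoint.

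\textbf{Step 1 (uniform quasisymmetric bound).} Fix a symmetric box $Q_0 \subset G(\widetilde X_0)$, and choose a nonnegative continuous compactly supported function $\xi_0 \colon G(\widetilde X_0) \to \R$ with $\xi_0 \geq 1$ on $Q_0$. As noted in \S\ref{sect:QuasiconfQuasisym}, every symmetric box $Q\subset G(\widetilde X_0)$ is of the form $\phi^{-1}(Q_0)$ for some $\phi \in \HHH(\widetilde X_0)$. Hence, for every $[f] \in \LL^{-1}(\mathcal S)$,
$$
L_f(Q) = L_f\bigl(\phi^{-1}(Q_0)\bigr) \leq \int_{G(\widetilde X_0)} \xi_0 \circ \phi \, dL_f \leq \Vert L_f \Vert_{\xi_0} \leq C,
$$
where $C$ is the uniform bound for $\Vert\cdot\Vert_{\xi_0}$ on $\mathcal S$. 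Consequently the quasisymmetric constants $M(f)$ are uniformly bounded by $C/\log 2$ over $\LL^{-1}(\mathcal S)$.

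\textbf{Step 2 (uniform Teichm\"uller bound).} By Theorem~\ref{thm:BeurlingAhlforsImproved}, for each $[f] \in \LL^{-1}(\mathcal S)$ represented by $f\colon X_0 \to X$ there is a quasiconformal diffeomorphism $f' \colon X_0 \to X$ that is bounded isotopic to $f$ (so $[f'] = [f]$) and whose dilatation $K(f')$ is bounded in terms of $M(f)$ alone. The bound from Step~1 therefore produces a constant $K_0$, independent of $[f]$, with $K(f') \leq K_0$. Since $f'$ is bounded isotopic to $f$, it is an admissible competitor in the infimum defining $\dT\bigl([\mathrm{id}_{X_0}], [f]\bigr)$, so
$$
\dT\bigl([\mathrm{id}_{X_0}], [f]\bigr) \leq {\textstyle \frac12} \log K(f') \leq {\textstyle \frac12} \log K_0.
$$
Thus $\LL^{-1}(\mathcal S)$ lies in the Teichm\"uller ball of radius $\frac12 \log K_0$ around $[\mathrm{id}_{X_0}]$, which establishes properness.

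\textbf{Expected obstacle.} Given the machinery already developed, no serious difficulty remains; the proposition is essentially a dual counterpart of Lemmas~\ref{lem:LiouvilleBounded2} and \ref{lem:UniformlyQuasisym}, supplemented by the fact that Theorem~\ref{thm:BeurlingAhlforsImproved} produces a quasiconformal representative in the correct bounded-isotopy class rather than merely a quasiconformal extension. The key conceptual point is that the $\HHH(\widetilde X_0)$-uniformity built into the definition of $\Vert\cdot\Vert_{\xi_0}$ is precisely what is needed to control the Liouville mass of \emph{every} symmetric box from a single test function, which is exactly the feature that motivated introducing the uniform weak* topology.
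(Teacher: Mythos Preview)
Your proof is correct and follows essentially the same route as the paper: bound the Liouville mass of every symmetric box via a single test function $\xi_0\geq 1$ on a fixed symmetric box, deduce a uniform bound on the quasisymmetric constants $M(f)$, and then invoke Theorem~\ref{thm:BeurlingAhlforsImproved} to convert this into a uniform Teichm\"uller bound. The only cosmetic difference is that the paper phrases the setup as starting from a subset $B\subset\T(X_0)$ with $\LL(B)$ bounded, whereas you start from a bounded $\mathcal S\subset\CC(X_0)$ and bound $\LL^{-1}(\mathcal S)$; these are equivalent formulations of the same argument.
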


\begin{proof}
Recall that a map is \emph{proper} if the preimage of a bounded set is bounded. 
We therefore need to prove the following property: Let $B $ be a subset of $\T(X_0)$ such that
$$
\sup_{[f] \in B} \sup_{\phi \in \HHH(\widetilde X_0)} \Big\vert \int_{G(\widetilde X_0)} \xi \circ \phi \, dL_f \Big\vert \leq C(\xi)
$$
 for every continuous function $\xi \colon G(\widetilde X_0) \to \R$ with compact support and for some constant $C(\xi)$ depending on $\xi$; then $B$ is bounded for the Teichm\"uller metric of $\T(X_0)$. 
 
For such a subset $B$, choose a symmetric box $Q_0 \subset  G(\widetilde X_0) $ and a non-negative function $\xi \colon G(\widetilde X_0) \to \R$ with compact support such that $\xi\geq 1$ over the box $Q_0$. Then, as in the proof of Lemma~\ref{lem:UniformlyQuasisym},  $L_f(Q) \leq C(\xi)$ for every symmetric box $Q$ and every $[f]\in B$, and the quasisymmetric constants $M(f)$ are uniformly bounded over $B$. By Theorem~\ref{thm:BeurlingAhlforsImproved}, this proves that $B$ is bounded by the Teichm\"uller metric. 
\end{proof}

The combination of Propositions~\ref{prop:LiouvilleContinuous}, \ref{prop:InverseLiouvilleContinuous}, \ref{prop:LiouvilleClosed} and \ref{prop:LiouvilleProper} proves Theorem~\ref{thm:LiouvilleEmbedding}, namely that the Liouville embedding $\LL \colon \T(X_0) \to \CC(X_0)$ is proper and induces a homeomorphism between $\T(X_0)$ and a closed subset of $\CC(X_0)$.

We are going to need a slightly stronger version of this result.

\subsection{The projectivization of the Liouville embedding}
\label{sect:ProjectiveLiouville}
 The group $\R^+$ of positive real numbers acts by multiplication on the space $\CC(X_0)$ of bounded geodesic currents. Let $\PCC(X_0)=\big( \CC(X_0)-\{0\} \big)/\R^+$ be the  quotient of $\CC(X_0)-\{0\}$ under this action. We endow the  space $\PCC(X_0)$  with the quotient of the uniform weak* topology of $\CC(X_0)$. 

 The elements of $\PCC(X_0)$ are \emph{projective bounded geodesic currents} in the Riemann surface~$X_0$.

Composing the Liouville embedding $\LL \colon \T(X_0) \to \CC(X_0)$ with the projection $\CC(X_0) \to \PCC(X_0)$ gives a continuous map  $\PL \colon \T(X_0) \to \PCC(X_0)$, which we call the \emph{projective Liouville embedding}. 
The following result shows that this projective Liouville embedding is really an embedding.

\begin{thm}
 \label{thm:ProjectiveLiouville}
 The map $\PL \colon \T(X_0) \to \PCC(X_0)$ induces a homeomorphism between the Teichm\"uller space $\T(X_0)$ and a subset of the space $\PCC(X_0)$ of projective bounded geodesic currents. 
\end{thm}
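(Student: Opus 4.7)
The plan is to establish, in this order, that $\PL$ is continuous, injective, and has continuous inverse on its image. Continuity is immediate: $\PL$ is the composition of $\LL\colon \T(X_0)\to \CC(X_0)$, which is continuous by Proposition~\ref{prop:LiouvilleContinuous}, with the quotient projection $\CC(X_0)\setminus\{0\}\to \PCC(X_0)$, which is continuous by definition. The two genuine tasks are injectivity of $\PL$ and continuity of $\PL^{-1}$.

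For injectivity, the main tool will be Lemma~\ref{lem:LiouvilleMassOrthoBox}. Suppose $\PL\big([f_1]\big)=\PL\big([f_2]\big)$, so $L_{f_1}=tL_{f_2}$ for some $t>0$. For every box $Q\subset G(\widetilde X_0)$, applying Lemma~\ref{lem:LiouvilleMassOrthoBox} to the images of $Q$ and $Q^\perp$ under lifts of $f_1$ and $f_2$ gives $\E^{-L_{f_i}(Q)}+\E^{-L_{f_i}(Q^\perp)}=1$ for $i=1,2$. Setting $x=\E^{-L_{f_2}(Q)}$ and $y=\E^{-L_{f_2}(Q^\perp)}$, these read $x+y=1$ and $x^t+y^t=1$. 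Since $t\mapsto x^t+(1-x)^t$ has derivative $x^t\log x+(1-x)^t\log(1-x)<0$ for $x\in (0,1)$, it is strictly decreasing, forcing $t=1$. Hence $L_{f_1}=L_{f_2}$, and Proposition~\ref{prop:LiouvilleInjective} gives $[f_1]=[f_2]$.

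For continuity of $\PL^{-1}$, I would argue sequentially. Given $\PL\big([f_n]\big)\to \PL\big([f_\infty]\big)$ in $\PCC(X_0)$, the metrizability of $\CC(X_0)$ (Lemma~\ref{lem:GeodCurrentSpaceMetrizable}) allows a diagonal extraction: taking the bases of metric balls around $L_{f_\infty}$, saturating under $\R^+$ to get open neighborhoods of $\PL\big([f_\infty]\big)$ in $\PCC(X_0)$, and choosing one representative from each ball, produces positive numbers $t_n>0$ with $t_nL_{f_n}\to L_{f_\infty}$ in $\CC(X_0)$. It then suffices to prove $t_n\to 1$, for then $L_{f_n}=t_n^{-1}(t_nL_{f_n})\to L_{f_\infty}$ by continuity of scalar multiplication, and Proposition~\ref{prop:InverseLiouvilleContinuous} yields $[f_n]\to[f_\infty]$. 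To see $t_n\to 1$, fix a symmetric box $Q_0\subset G(\widetilde X_0)$; since the boundaries of $Q_0$ and $Q_0^\perp$ are one-dimensional and the Liouville measure has a continuous density, Lemma~\ref{lem:Weak*ConvImpliesConvMasses} gives $t_nL_{f_n}(Q_0)\to L_{f_\infty}(Q_0)>0$ and $t_nL_{f_n}(Q_0^\perp)\to L_{f_\infty}(Q_0^\perp)>0$. Combined with the constraint $\E^{-L_{f_n}(Q_0)}+\E^{-L_{f_n}(Q_0^\perp)}=1$ from Lemma~\ref{lem:LiouvilleMassOrthoBox}, neither mass can tend to $0$ or $\infty$, so $t_n$ is bounded above and away from zero. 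For any subsequence with $t_{n_k}\to t_\infty>0$, continuity of scalar multiplication gives $L_{f_{n_k}}\to t_\infty^{-1}L_{f_\infty}$, and the closedness of $\LL\big(\T(X_0)\big)$ in $\CC(X_0)$ (Proposition~\ref{prop:LiouvilleClosed}) identifies this limit as $L_{f'}$ for some $[f']\in \T(X_0)$. Then $L_{f_\infty}=t_\infty L_{f'}$, so $\PL\big([f_\infty]\big)=\PL\big([f']\big)$, and the injectivity established above forces $t_\infty=1$.

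The main obstacle I expect is the interplay between the scaling ambiguity inherent in $\PCC(X_0)$ and the algebraic rigidity supplied by Lemma~\ref{lem:LiouvilleMassOrthoBox}: ruling out $t_n\to t_\infty\neq 1$ requires both the closedness of the image of $\LL$ and the previously proved injectivity of $\PL$. Once these ingredients are assembled, the remaining steps amount to careful bookkeeping using the metrizability of $\CC(X_0)$ and the fact that Liouville measures place no mass on the boundary of a box.
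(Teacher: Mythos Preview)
Your proof is correct and follows the same strategy as the paper's: continuity by composition, injectivity via the orthogonal box relation of Lemma~\ref{lem:LiouvilleMassOrthoBox}, and continuity of $\PL^{-1}$ by lifting a convergent sequence to $\CC(X_0)$, showing the scaling factors converge to $1$, and then applying Proposition~\ref{prop:InverseLiouvilleContinuous}. The only minor difference is in the step $t_n\to 1$: the paper deduces this directly from Lemma~\ref{lem:LiouvilleMassOrthoBox} applied to a fixed box and its orthogonal (since $t_n L_{f_n}(Q)\to L_{f_\infty}(Q)$ and $t_n L_{f_n}(Q^\perp)\to L_{f_\infty}(Q^\perp)$ together with $\E^{-L_{f_n}(Q)}+\E^{-L_{f_n}(Q^\perp)}=1$ force $t_n\to 1$), whereas you reach the same conclusion by a subsequence argument that additionally invokes the closedness of $\LL\big(\T(X_0)\big)$ from Proposition~\ref{prop:LiouvilleClosed}; both arguments are valid, but the paper's avoids that extra ingredient.
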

\begin{proof} The map  $\PL \colon \T(X_0) \to \PCC(X_0)$ is injective. Indeed, if $\PL\big( [f_1] \big) =  \PL\big( [f_2] \big)$ in $\PCC(X_0)$, the Liouville current $\LL\big( [f_2] \big) =L_{f_2}$ is equal to  $t \LL\big( [f_1] \big) =t  L_{f_1}$ in $\CC(X_0)$ for some number $t>0$. The property of Lemma~\ref{lem:LiouvilleMassOrthoBox}, that
 $$
 \E^{-L_f(Q)} + \E^{-L_f(Q^\perp)} =1
 $$
for every $[f] \in \T(X_0)$ and every box $Q\subset G(\widetilde X_0)$ with orthogonal box $Q^\perp$, then shows that necessarily $t =1$. The injectivity of $\PL \colon \T(X_0) \to \PCC(X_0)$ then follows from  the  injectivity of  the Liouville embedding $\LL \colon \T(X_0) \to \CC(X_0)$ (Proposition~\ref{prop:LiouvilleInjective}).

 The projective Liouville embedding $\PL$ was defined as the composition of two continuous maps, and is consequently continuous. Therefore, we only have to show that its inverse $\PL^{-1} \colon \PL \big ( \T(X_0) \big) \to \T(X_0)$ is continuous. 
 
 For this, consider a sequence of points $[f_n] \in \T(X_0)$ such that  $\lim_{n\to \infty}\PL\big( [f_n] \big)= \PL\big( [f_\infty] \big)$ in $\PCC(X_0)$ for some $[f_\infty] \in \T(X_0)$. We want to show that $\lim_{n\to \infty} [f_n]=[f_\infty]$ in $\T(X_0)$. 
 
 By definition of the quotient topology, the property that $\lim_{n\to \infty}\PL\big( [f_n] \big)= \PL\big( [f_\infty] \big)$ means that there exists a sequence  $r_n \in \R^+$ such that $\frac1{r_n} \LL\big( [f_n] \big)= \frac1{r_n} L_{f_n}$ converges to  $ \LL\big( [f_\infty] \big)=L_{f_\infty}$ in $\CC(X_0)$, for the uniform weak* topology. In particular, $\frac1{r_n}L_{f_n}$ converges to  $ L_{f_\infty}$ for the (non uniform) weak* topology and, by Lemma~\ref{lem:Weak*ConvImpliesConvMasses}, it follows that $\frac1{r_n}L_{f_n}(Q)$ converges to  $L_{f_\infty}(Q)$ for every box $Q \subset G(\widetilde X_0)$. Another application of Lemma~\ref{lem:LiouvilleMassOrthoBox}, to a single box $Q$,  then shows that necessarily $\lim_{n\to \infty} r_n =1$. 
 
 As a consequence, $\lim_{n\to \infty}\LL\big( [f_n] \big)= \LL\big( [f_\infty] \big)$ in $\CC(X_0)$. Since the inverse map $\LL^{-1} \colon \LL \bigl( \T(X_0) \bigr) \to \T(X_0)$ is continuous by Proposition~\ref{prop:InverseLiouvilleContinuous}, if follows that $\lim_{n\to \infty} [f_n]=[f_\infty]$ in $\T(X_0)$ as required. 
\end{proof}

\section{A boundary for the Teichm\"uller space}

\subsection{Measured geodesic laminations} 

 A \emph{measured geodesic lamination} in the Riemann surface $X_0$ is a  geodesic current $\alpha\in  \mathcal C(X_0)$ such that:
 \begin{enumerate}
\item $\alpha$ is balanced, in the sense that it is invariant under the involution $\tau \colon G(\widetilde X_0) \to G(\widetilde X_0)$ that reverses the orientation of each geodesic $g \in G(\widetilde X_0)$;

\item any two distinct geodesics $g$, $g'$ of the support $\mathrm{Supp}(\alpha)\subset G(\widetilde X_0)$ are disjoint in $\widetilde X_0$, unless $g'=\tau(g)$;
\end{enumerate}

By equivariance of $\alpha$, its support is invariant under the action of $\pi_1(X_0)$ and therefore descends to a \emph{geodesic lamination} $\lambda_\alpha$ in $X_0$, namely to a family   of disjoint simple complete geodesics (for the Poincar\'e metric of $X_0$) whose union forms a closed subset of $X_0$. Recall that a geodesic is \emph{complete} if it cannot be extended to a longer geodesic, and that it is \emph{simple} if it does not transversely intersect itself. 

Beware that, in contrast to the classical case where $X_0$ is compact, the union of the geodesics of the geodesic lamination $\lambda_\alpha$ can have nonempty interior in $X_0$, and that this subset can have several decompositions as a union of pairwise disjoint complete geodesics. 

A measured geodesic lamination is \emph{bounded} if it is  bounded as a geodesic current, as defined in \S \ref{sect:GeodesicCurr}. Let $\ML(X_0) \subset \CC(X_0)$ denote the space of bounded measured geodesic laminations in the Riemann surface $X_0$.

\subsection{The Thurston boundary of $\T(X_0)$} 
\label{subsect:ThurstonBdry}

As in \S \ref{sect:ProjectiveLiouville}, consider the projective Liouville embedding $\PL \colon \T(X_0) \to \PCC(X_0)$ from the Teichm\"uller space $\T(X_0)$ to the space $\PCC(X_0)$  of projective bounded geodesic currents. We saw in Theorem~\ref{thm:ProjectiveLiouville} that $\PL$ induces a homeomorphism from $\T(X_0)$ to its image $\PL \big( \T(X_0) \big) \subset \PCC(X_0)$. 

By analogy with the case where $X_0$ is compact, we define the \emph{Thurston boundary} of $\T(X_0)$ as the boundary of this embedding, namely as the set of points of $\PCC(X_0)$ that are in the closure of $\PL \big( \T(X_0) \big)$ but are not contained in $\PL \big( \T(X_0) \big)$. 

Our next goal is to describe this closure. Note that the space $\ML(X_0)$ of bounded measured geodesic laminations is invariant under the action of $\R^+$ on $\CC(X_0)$. It therefore makes sense to consider its image $\PML(X_0) =\big( \ML(X_0)-\{0\} \big) /\R^+$ in $\PCC(X_0)$. By definition, the points of $\PML(X_0)$ are \emph{projective bounded measured geodesic laminations} in $X_0$.

\begin{prop}
\label{prop:ThurstonBdryConsistsOfMeasuredLam}
 The Thurston boundary of the Teichm\"uller space $\T(X_0)$ is contained in the space $\PML(X_0)$ of projective bounded measured geodesic laminations.
\end{prop}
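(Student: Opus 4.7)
The plan is to show that any Thurston-boundary point $[\alpha]$, represented by a nonzero $\alpha \in \CC(X_0)$ with $[\alpha] \notin \PL\bigl(\T(X_0)\bigr)$, is (up to rescaling) a bounded measured geodesic lamination. By definition of the quotient topology on $\PCC(X_0)$, I can pick $[f_n] \in \T(X_0)$ and positive scalars $t_n$ so that $t_n L_{f_n} \to \alpha$ in the uniform weak* topology on $\CC(X_0)$. Each $L_{f_n}$ is balanced because the Liouville measure on $\D$ is symmetric in its two boundary variables, and invariance under the orientation-reversing involution $\tau$ passes to the limit, so $\alpha$ is balanced. It remains to check that the support $\mathrm{Supp}(\alpha) \subset G(\widetilde X_0)$ contains no two distinct crossing geodesics $g, g'$ with $g' \neq \tau(g)$.

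The argument proceeds by contradiction. Suppose such $g$, $g'$ exist. Their four distinct endpoints appear on $\partial_\infty \widetilde X_0$ in some counterclockwise order $p_1, p_2, p_3, p_4$, with $g = (p_1, p_3)$ and, after replacing $g'$ by $\tau(g')$ if necessary (both belong to $\mathrm{Supp}(\alpha)$ since $\alpha$ is balanced), $g' = (p_2, p_4)$. I will pick corners $a, b, c, d$ in the respective open arcs $(p_4, p_1)$, $(p_1, p_2)$, $(p_2, p_3)$, $(p_3, p_4)$, so that the box $Q = [a,b] \times [c,d]$ is a neighborhood of $g$ and its orthogonal $Q^\perp = [b,c] \times [d,a]$ is automatically a neighborhood of $g'$. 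Since at most countably many such choices can place positive $\alpha$-mass on a side of $Q$ or $Q^\perp$, the corners can additionally be chosen so that $\alpha(\partial Q) = \alpha(\partial Q^\perp) = 0$. Lemma~\ref{lem:Weak*ConvImpliesConvMasses} then yields $t_n L_{f_n}(Q) \to \alpha(Q) > 0$ and $t_n L_{f_n}(Q^\perp) \to \alpha(Q^\perp) > 0$, with both limits finite since $\alpha$ is Radon and both boxes are compact in $G(\widetilde X_0)$.

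I then split into three cases on the behavior of $\{t_n\}$. If a subsequence $t_{n_k} \to t_\infty \in (0, \infty)$, then $L_{f_{n_k}} \to t_\infty^{-1}\alpha$ in the uniform weak* topology, and the closedness of $\LL\bigl(\T(X_0)\bigr)$ in $\CC(X_0)$ (Proposition~\ref{prop:LiouvilleClosed}) forces $\alpha = t_\infty L_{f_\infty}$ for some $[f_\infty] \in \T(X_0)$, giving $[\alpha] \in \PL\bigl(\T(X_0)\bigr)$, contrary to hypothesis. If $t_{n_k} \to 0$, then $L_{f_{n_k}}(Q) \to \infty$, and Lemma~\ref{lem:LiouvilleMassOrthoBox} gives $\E^{-L_{f_{n_k}}(Q^\perp)} = 1 - \E^{-L_{f_{n_k}}(Q)} \to 1$, so $L_{f_{n_k}}(Q^\perp) \to 0$ and hence $t_{n_k}L_{f_{n_k}}(Q^\perp) \to 0$, contradicting $\alpha(Q^\perp) > 0$. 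If $t_{n_k} \to \infty$, then $L_{f_{n_k}}(Q) \to 0$; the same identity gives $L_{f_{n_k}}(Q^\perp) \sim -\log L_{f_{n_k}}(Q) \sim \log t_{n_k}$, so $t_{n_k}L_{f_{n_k}}(Q^\perp) \sim t_{n_k}\log t_{n_k} \to \infty$, contradicting the finiteness of $\alpha(Q^\perp)$. The main subtlety I anticipate is the box construction itself: matching $Q$ to $g$ and $Q^\perp$ to $g'$ in the interlaced-endpoint configuration of two crossing geodesics, and simultaneously arranging the $\alpha$-null boundary condition; once this is in place, Lemma~\ref{lem:LiouvilleMassOrthoBox} together with the trichotomy on $t_n$ drives the contradictions mechanically.
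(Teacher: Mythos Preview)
Your proof is correct and uses the same key ingredients as the paper: the box/orthogonal-box construction for crossing geodesics, Lemma~\ref{lem:Weak*ConvImpliesConvMasses} for convergence of box masses, Lemma~\ref{lem:LiouvilleMassOrthoBox} for the identity $\E^{-L_f(Q)}+\E^{-L_f(Q^\perp)}=1$, and Proposition~\ref{prop:LiouvilleClosed} for closedness of the Liouville image. The only organizational difference is that the paper first establishes $t_n\to 0$ (in your convention) \emph{before} assuming crossing geodesics---handling your Cases~1 and~3 in one stroke via closedness and the fact that $0\notin\LL\bigl(\T(X_0)\bigr)$---and then derives the contradiction from $L_{f_n}(Q)\to\infty$ and $L_{f_n}(Q^\perp)\to\infty$ simultaneously; your explicit verification that $\alpha$ is balanced is a detail the paper omits.
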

\begin{proof} Let $\alpha \in \CC(X_0)$ be a bounded geodesic current whose image $\langle \alpha \rangle \in \PCC(X_0)$ is in the Thurston boundary.  In particular, $\langle \alpha \rangle$ is in the closure of $\PL \big( \T(X_0) \big)$, and there exists a sequence $[f_n ] \in \T(X_0)$ and numbers $t_n>0$ such that 
$$
\alpha = \lim_{n\to \infty} \frac1{t_n} \LL\big( [f_n] \big) = \lim_{n\to \infty} \frac1{t_n} L_{f_n}.
$$

We claim that $t_n \to \infty$ as $n\to \infty$. Indeed, we would otherwise find a subsequence $t_{n_k}$ converging to some $t_\infty\geq 0$ as $k \to \infty$. Then, $t_\infty \alpha = \lim_{k \to \infty} L_{f_{n_k}}$ would belong to $\LL\big( \T(X_0) \big)$ since this image is closed by Theorem~\ref{thm:LiouvilleEmbedding}. Note that $t_\infty $ cannot be equal to $ 0$, as otherwise $\LL\big( \T(X_0) \big)$ would contain the trivial geodesic current $0\in \CC(X_0)$ while Liouville currents clearly are never trivial. But it cannot be different from 0 either, as this would otherwise contradict  the fact that $\langle \alpha \rangle$ is not allowed to belong to $\PL \big( \T(X_0) \big)$, by definition of the Thurston boundary. 

Now suppose, in search of a contradiction, that $\alpha$ is not a measured geodesic lamination. This means that the support of $\alpha$ contains two geodesics $g$, $g' \in G(\widetilde X_0)$ that cross each other in $\widetilde X_0$. We can then find  a box $Q \subset G(\widetilde X_0)$ containing $g$ in its interior such that the orthogonal box $Q^\perp$ contains $g'$ in its interior (possibly after reversing the orientation of $g'$). In particular, $\alpha(Q)>0$ and $\alpha(Q^\perp)>0$. In addition, by countable additivity of $\alpha$, we can choose the points of $\partial_\infty \widetilde X_0$ delimiting $Q$ so that $\alpha(\partial Q) = \alpha (\partial Q^\perp) =0$. 
Then, by weak* convergence (see Lemma~\ref{lem:Weak*ConvImpliesConvMasses}),   
$$\alpha(Q) = \lim_{n\to \infty}\frac1{t_n} L_{f_n}(Q) \text{ and } \alpha(Q^\perp) = \lim_{n\to \infty}\frac1{t_n} L_{f_n}(Q^\perp),$$ so that $$\lim_{n\to \infty} L_{f_n}(Q)=\lim_{n\to \infty} L_{f_n}(Q^\perp)=\infty$$ 
since we established that $t_n \to \infty$ as $n\to \infty$.  But this contradicts Lemma~\ref{lem:LiouvilleMassOrthoBox}, and the fact that $ \E^{-L_{f_n}(Q)} + \E^{-L_{f_n}(Q^\perp)} =1$. 

Therefore, the support of $\alpha$ is a geodesic lamination, and $\langle \alpha \rangle$ belongs to the space  $\PML(X_0)$ of projective bounded measured geodesic laminations.
\end{proof}

We prove the converse  of Proposition~\ref{prop:ThurstonBdryConsistsOfMeasuredLam} as Corollary~\ref{cor:MeasuredLamLimitLiouville} in the next section. The combination of these two statements will show:
\begin{thm}
\label{thm:ThurstonBdryMeasureLam}
  The Thurston boundary of the Teichm\"uller space $\T(X_0)$ is exactly equal to the space $\PML(X_0)$ of projective bounded measured geodesic laminations. \qed
\end{thm}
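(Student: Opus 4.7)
The plan is to establish the reverse inclusion to Proposition~\ref{prop:ThurstonBdryConsistsOfMeasuredLam}, namely that every projective class $\langle \alpha \rangle \in \PML(X_0)$ coming from a nonzero bounded measured geodesic lamination $\alpha \in \ML(X_0)$ belongs to the Thurston boundary of $\T(X_0)$. Combining this reverse inclusion with the forward inclusion already proved as Proposition~\ref{prop:ThurstonBdryConsistsOfMeasuredLam} immediately yields Theorem~\ref{thm:ThurstonBdryMeasureLam}. The key tool is Theorem~\ref{thm:LimitEarthquakesIntro}, which supplies an explicit sequence in $\T(X_0)$ whose projective Liouville currents converge to $\langle \alpha \rangle$.

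Here is the outline. Fix a nonzero $\alpha \in \ML(X_0)$ and choose any base point $[f] \in \T(X_0)$. Consider the earthquake deformations $[f_n] = E^{n\alpha}[f] \in \T(X_0)$ for $n \in \N$. Theorem~\ref{thm:LimitEarthquakesIntro} gives
$$
\lim_{n\to\infty} \tfrac{1}{n}\LL\big([f_n]\big) = \alpha
$$
in $\CC(X_0)$ for the uniform weak* topology. Since the scaling $\tfrac{1}{n}\LL([f_n])$ represents the same projective class as $\LL([f_n])$ in $\PCC(X_0)$, and since the projection $\CC(X_0)\setminus\{0\} \to \PCC(X_0)$ is continuous by definition of the quotient topology, we obtain $\PL([f_n]) \to \langle \alpha \rangle$ in $\PCC(X_0)$. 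This places $\langle \alpha \rangle$ in the closure of $\PL(\T(X_0))$.

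It remains to verify that $\langle \alpha \rangle \notin \PL(\T(X_0))$, so that $\langle \alpha \rangle$ genuinely lies on the Thurston boundary. Suppose for contradiction that $\langle \alpha \rangle = \PL([g])$ for some $[g] \in \T(X_0)$, so that $\alpha = c L_g$ for some $c>0$. By construction, $L_g$ is the pullback of the Liouville measure $L_{\widetilde X}$ under the quasisymmetric homeomorphism $\widetilde g \colon \partial_\infty \widetilde X_0 \to \partial_\infty \widetilde X$. Because the Liouville measure on $G(\D)$ has density $1/|\E^{\I t}-\E^{\I s}|^2$, which is strictly positive off the diagonal, $L_{\widetilde X}$ has full support on $G(\widetilde X)$; transporting by the homeomorphism $\widetilde g$ shows that $L_g$, and hence $c L_g = \alpha$, has full support on $G(\widetilde X_0)$. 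On the other hand, the support of any measured geodesic lamination is contained in a closed subset of $G(\widetilde X_0)$ consisting of pairwise disjoint geodesics (up to orientation reversal), and such a set has empty interior in $G(\widetilde X_0) = \partial_\infty \widetilde X_0 \times \partial_\infty \widetilde X_0 - \Delta$: any nonempty open box contains pairs of transverse geodesics. This contradicts the assumption $\alpha \neq 0$.

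The genuine substance of the argument lies not in this combination but in the two ingredients that feed it: Theorem~\ref{thm:LimitEarthquakesIntro}, which in the noncompact setting cannot be obtained from the classical Kerckhoff or \cite[Exp.~8]{FLP} estimates and requires a dedicated analysis of earthquake paths in the uniform weak* topology, and Proposition~\ref{prop:ThurstonBdryConsistsOfMeasuredLam}, whose proof crucially uses Lemma~\ref{lem:LiouvilleMassOrthoBox} and the weak* convergence of masses on boxes. Granted these, the route through Theorem~\ref{thm:LimitEarthquakesIntro} plus the full-support argument above gives the desired equality of the Thurston boundary with $\PML(X_0)$.
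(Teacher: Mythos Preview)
Your proposal is correct and follows essentially the same route as the paper: the combination of Proposition~\ref{prop:ThurstonBdryConsistsOfMeasuredLam} with Theorem~\ref{thm:LimitEarthquakesIntro} (stated in the body as Theorem~\ref{thm:LimitEarthquakes} and packaged as Corollary~\ref{cor:MeasuredLamLimitLiouville}), together with the observation that Liouville currents have full support while measured geodesic laminations do not. Your expanded justification that a lamination support has empty interior in $G(\widetilde X_0)$, because any open box contains a pair of crossing geodesics, makes explicit what the paper states in one line.
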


\section{Earthquakes}
We will use earthquakes as a tool to show that every projective bounded measured geodesic lamination is contained  in the Thurston boundary of $\T(X_0)$.  The key technical step is Theorem~\ref{thm:LimitEarthquakes} below, which is of independent interest.

\subsection{Earthquakes} Let $\lambda$ be a geodesic lamination in the Riemann surface $X_0$, namely a family of disjoint simple complete geodesics in $X_0$ whose union is closed in $X_0$. Let $\widetilde \lambda \subset G(\widetilde X_0)$ consist of those geodesics which project to one of the geodesics of $\lambda$. In particular, $\widetilde\lambda$ is invariant under the involution $\tau \colon G(\widetilde X_0) \to G(\widetilde X_0)$ that acts by reversing the orientation of each geodesic. A simple argument also shows that $\widetilde \lambda$ is closed in $G(\widetilde X_0)$. 

If $[f]$, $[f'] \in \T(X_0)$ are two points of the Teichm\"uller space of $X_0$, we say that $[f']$ is obtained from $[f]$ by a \emph{left earthquake along $\lambda$} if 
$$
L_f(Q) \leq L_{f'}(Q)
$$
for every box of geodesics $Q=[a,b] \times [c,d] \subset G(\widetilde X_0)$ such that $\{a,c\} \in \partial_\infty \widetilde S$ are the endpoints of one of the geodesics of $\widetilde \lambda$.

Thurston \cite{Thu1} shows how to quantify the increase in Liouville masses by a measure  on the closed subset $\widetilde \lambda \subset G(\widetilde X_0)$, namely by a measure $\alpha$ on $G(\widetilde X_0)$ whose support is contained in $\widetilde \lambda$. In addition, $\alpha$  is invariant under the action of the fundamental group $\pi_1(X_0)$, and consequently is a measured geodesic lamination. A  subtler consequence of the fact that $f$ is quasiconformal is that $\alpha$ is bounded; see  \cite{Thu1, Sar1, Sar2, GHL, EMM}. 

Thurston also introduced an inverse construction \cite{Thu1, EpM} which, given a point $[f] \in \T(X_0)$ and a bounded measured geodesic lamination $\alpha \in \ML(X_0)$, produces another element $[f'] \in \T(X_0)$ that is obtained from $[f]$ by a left earthquake along the support $\lambda_\alpha$ of $\alpha$, with amplitude determined by the measure $\alpha$. We then write that $[f'] = E^\alpha [f]$. 

Finally, Thurston shows \cite{Thu1}  that for any two $[f]$, $[f']  \in \T(X_0)$ there exists a unique $\alpha \in \ML(X_0)$ such that $[f'] = E^\alpha [f]$. See also \cite{Ke}. 

\begin{rem}
\label{rem:BoundedLamAndQuasiconfEarthqke}
 We  should  emphasize the close relationship between the boundedness property for measured geodesic laminations and the quasiconformal geometry underlying the Teichm\"uller space. Thurston's construction  \cite{Thu1} makes sense in the broader context of diffeomorphisms $f\colon X_0 \to X$ whose lift to universal covers continuously extends to a homeomorphism $\partial_\infty \widetilde X_0 \to \partial_\infty \widetilde X$. These are  not necessarily quasiconformal, so that they do not necessarily define an element $[f] \in \T(X_0)$, but the equivalence relation defining the Teichm\"uller space makes sense in this more general context. Thurston shows that any two such $f\colon X_0 \to X$ and $f'\colon X_0 \to X'$ are related by an earthquake, namely that $[f']= E^\alpha [f]$ for some measured geodesic lamination $\alpha$ which is not necessarily bounded. However, when $X_0$ is noncompact, there is no easy characterization of which measured geodesic laminations $\alpha \in \mathcal{ML}(X_0)$ occur in this way. The results mentioned above show that, when $f$ is quasiconformal, $E^\alpha[f]$ is well-defined and realized by a quasiconformal diffeomorphism $f'$ precisely when $\alpha$ is bounded.
 
 This distinction is of course irrelevant when $X_0$ is compact, as every diffeomorphism $f\colon X_0 \to X$ is then quasiconformal,  and every measured geodesic lamination is bounded by Proposition~\ref{prop:UniformNonUniformWhenCompact}. 
\end{rem}

For a bounded measured geodesic lamination $\alpha \in \ML(X_0)$ and a number  $t>0$, let $t\alpha$ be the bounded measured geodesic lamination obtained by multiplying the measure $\alpha$ by $t$. The following theorem investigates the behavior of $E^{t\alpha} [f] \in \T(X_0)$ under the Liouville embedding $\LL \colon \T(X_0) \to \CC(X_0)$. 

\begin{thm}
\label{thm:LimitEarthquakes}
Let $\alpha \in \ML(X_0)$ be a bounded measured geodesic lamination in the Riemann surface $X_0$. Then, for every $[f] \in \T(X_0)$,
$$
\lim_{t\to \infty} \frac1t \LL \big( E^{t\alpha} [f] \big) = \alpha
$$
for the uniform weak* topology on the space $\CC(X_0)$ of geodesic currents. 
\end{thm}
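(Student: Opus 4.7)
The plan is to test the uniform weak* convergence on step functions supported on boxes of geodesics. Exactly as in the proof of Proposition~\ref{prop:LiouvilleContinuous}, this reduces the theorem to showing that for every box $Q \subset G(\widetilde X_0)$ with $\alpha(\partial Q) = 0$,
$$
\sup_{\phi \in \HHH(\widetilde X_0)} \Bigl| \tfrac{1}{t} L_{E^{t\alpha}[f]}(\phi(Q)) - \alpha(\phi(Q)) \Bigr| \to 0 \text{ as } t \to \infty.
$$
A crucial simplification comes from the lamination hypothesis: no two leaves of the support of $\alpha$ cross each other in $\widetilde X_0$, so for every box $Q$ at least one of $\alpha(Q)$ and $\alpha(Q^\perp)$ must vanish. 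Combining this dichotomy with the orthogonal-box identity $\E^{-L(Q)} + \E^{-L(Q^\perp)} = 1$ of Lemma~\ref{lem:LiouvilleMassOrthoBox}, the problem reduces, up to interchanging $Q$ and $Q^\perp$, to the assertion that whenever $\alpha(Q^\perp) = 0$,
$$
L_{E^{t\alpha}[f]}(\phi(Q)) = t\,\alpha(\phi(Q)) + O(1),
$$
with the error term bounded uniformly in $\phi \in \HHH(\widetilde X_0)$.

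To establish this estimate, I would compute $L_{E^{t\alpha}[f]}(Q)$ via Lemma~\ref{lem:LiouvilleAndCrossratios} as the logarithm of a cross-ratio of the images of the four corners $a,b,c,d$ of $Q=[a,b]\times[c,d]$ under the boundary extension of a representative of $E^{t\alpha}[f]$. The boundary extension of an earthquake can be written as a suitable product (or integral) of hyperbolic translations along the leaves of the support of $t\alpha$. Partitioning these leaves according to how their endpoints are distributed among the four arcs $[a,b]$, $[b,c]$, $[c,d]$, $[d,a]$, one finds that the leaves with one endpoint in $[a,b]$ and one in $[c,d]$ carry total mass $\alpha(Q)$ and contribute $t\alpha(Q)$ to the logarithm of the cross-ratio to leading order; leaves with both endpoints in a single arc yield only bounded corrections; and leaves with one endpoint in $[b,c]$ and one in $[d,a]$ are absent, since their total mass would be $\alpha(Q^\perp)=0$. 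The assumption $\alpha(\partial Q) = 0$ rules out troublesome contributions from leaves whose endpoints sit exactly at the corners.

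Uniformity in $\phi$ would then follow from combining two boundedness facts. First, $L_f(\phi(Q))$ stays in a compact subset of $\left]0,\infty\right[$ as $\phi$ varies, by Lemma~\ref{lem:LiouvilleBounded2} applied to a continuous test function supported in a neighborhood of $Q$. Second, because $\alpha \in \ML(X_0) \subset \CC(X_0)$ is a bounded geodesic current, the $\alpha$-masses of the $\phi$-translated arcs, viewed as subsets of $G(\widetilde X_0)$, are uniformly bounded. Together these absorb the $O(1)$ error uniformly in $\phi$, so dividing by $t$ and letting $t \to \infty$ gives the claimed uniform convergence.

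The main obstacle is the cross-ratio computation in the third paragraph: turning the schematic decomposition into a precise, quantitatively uniform estimate when $\alpha$ has continuous (rather than finitely-supported) support. The classical estimates of Kerckhoff and \cite[Exp.~8]{FLP} exploit compactness of the surface in ways that are not available here, so I expect to approximate $\alpha$ by finite weighted sums of leaves—using the hypothesis $\alpha(\partial Q) = 0$ to control the approximation near the corners—and then perform the cross-ratio calculation on each elementary earthquake in a normalized M\"obius chart in which the leaf becomes the imaginary axis, keeping explicit track of the error as the approximation is refined.
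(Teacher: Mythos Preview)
Your partition of leaves in the third paragraph is incomplete: you account for leaves in $Q$, leaves in $Q^\perp$, and leaves with both endpoints in a single arc, but you omit the four families of leaves that encircle a corner of $Q$, namely those with one endpoint in each of two \emph{adjacent} arcs (e.g.\ one in $[a,b]$ and one in $[b,c]$). These are precisely the measures the paper calls $\alpha_a,\alpha_b,\alpha_c,\alpha_d$ in Step~2 of Lemma~\ref{lem:EarthquakesAndBoxes1}, and they are the source of the real difficulty. A leaf encircling the corner $b$, say, slides $b$ along $\partial_\infty\widetilde X_0$ under the earthquake, and its contribution to $L_{E^{t\alpha}[f]}(Q)$ is of order $t$ times the $\alpha$-mass of such leaves, not $O(1)$. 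Equation~(\ref{eqn:EquakeUpperEst7}) in the paper makes this explicit: the upper bound carries terms $t\,\alpha(Q_{a'}')$ and $t\,\alpha(Q_{c'}')$ coming from corner-encircling leaves, and these are killed only by letting $a'\to a$, $c'\to c$ and invoking the $\alpha$-genericity of $Q$. So the correct error is $o(t)$, obtained via an auxiliary limiting procedure, and your claimed uniform $O(1)$ bound is false as stated.

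This feeds directly into the uniformity gap. Even for a single $\alpha$-generic box, the rate at which $\alpha(Q_{a'}')\to 0$ as $a'\to a$ depends on $Q$; when you replace $Q$ by $\phi(Q)$ this rate is not uniform in $\phi\in\HHH(\widetilde X_0)$, and indeed $\phi(Q)$ need not be $\alpha$-generic at all. Your boundedness hypotheses give only that the total corner-encircling mass is uniformly \emph{bounded}, not uniformly \emph{small}. The paper handles this by an entirely different mechanism: a proof by contradiction in which one extracts a bad sequence $\phi_n$, pushes $\alpha$ forward to $\alpha_n=(\phi_n)_*\alpha$, uses weak* compactness (of measures, not of $\HHH(\widetilde X_0)$) to pass to a subsequential limit $\beta$, and then applies a strengthened box estimate (Lemma~\ref{lem:EarthquakesAndBoxes2}) on $\beta$-generic boxes. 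That compactness step is what converts the non-uniform $o(t)$ estimate into the uniform statement, and nothing in your outline replaces it.
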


The proof of Theorem~\ref{thm:LimitEarthquakes} will occupy the rest of this section. However, it has the following immediate corollary, which completes the proof of Theorem~\ref{thm:ThurstonBdryMeasureLam}. 

\begin{cor}
\label{cor:MeasuredLamLimitLiouville}
The space  $ \PML(X_0)$ of projective bounded measured geodesic laminations is contained in the Thurston boundary of the Teichm\"uller space $\T(X_0)$. 
\end{cor}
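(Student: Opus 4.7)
The plan is to combine Theorem~\ref{thm:LimitEarthquakes} with the symmetry property of Liouville currents encoded in Lemma~\ref{lem:LiouvilleMassOrthoBox}. Fix any nonzero $\alpha\in \ML(X_0)$ representing a class $\langle \alpha\rangle \in \PML(X_0)$, and pick any base point $[f_0]\in \T(X_0)$. Theorem~\ref{thm:LimitEarthquakes} tells us that $\frac{1}{t}\LL\bigl(E^{t\alpha}[f_0]\bigr)\to \alpha$ in the uniform weak* topology of $\CC(X_0)$. Since $\alpha\ne 0$, continuity of the quotient map $\CC(X_0)\setminus\{0\}\to \PCC(X_0)$ implies that $\PL\bigl(E^{t\alpha}[f_0]\bigr)\to \langle \alpha\rangle$ as $t\to\infty$. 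Hence $\langle \alpha\rangle$ lies in the closure of $\PL\bigl(\T(X_0)\bigr)$, and the only remaining task is to check that $\langle \alpha\rangle$ does not already belong to $\PL\bigl(\T(X_0)\bigr)$.

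Suppose, for a contradiction, that $\LL\bigl([f]\bigr) = s\alpha$ for some $[f]\in \T(X_0)$ and some $s>0$. Because $\alpha\ne 0$, its support contains some geodesic $g\subset \widetilde X_0$ with endpoints $p,q\in\partial_\infty\widetilde X_0$. Choose four points arranged counterclockwise on $\partial_\infty\widetilde X_0$ in the order $a,p,b,c,q,d$, with $a,b$ close to $p$ and $c,d$ close to $q$, so that the box $Q=[a,b]\times[c,d]$ contains $g$ in its interior and is a neighborhood of $g$ in $G(\widetilde X_0)$. A direct endpoint-alternation check shows that every geodesic in the orthogonal box $Q^\perp=[b,c]\times[d,a]$ crosses $g$ in $\widetilde X_0$. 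Since the support of $\alpha$ is a geodesic lamination containing $g$, no such transverse geodesic can lie in this support, whence $\alpha(Q^\perp)=0$. On the other hand, $g$ lies in the interior of $Q$, so $\alpha(Q)>0$ by definition of the support. Applying Lemma~\ref{lem:LiouvilleMassOrthoBox} to the Liouville current $L_f = s\alpha$ then forces
$$\E^{-s\alpha(Q)}+\E^{-s\alpha(Q^\perp)}=\E^{-s\alpha(Q)}+1=1,$$
so $\E^{-s\alpha(Q)}=0$, which is impossible. This contradiction shows $\langle \alpha\rangle\notin \PL\bigl(\T(X_0)\bigr)$, completing the proof.

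Most of the difficulty has already been absorbed into Theorem~\ref{thm:LimitEarthquakes}, so this corollary is essentially a matter of assembling the pieces. The only point that warrants a moment of care is the geometric choice of the box $Q$: its corners must be taken close enough to $p$ and $q$ (and in the right cyclic order on $\partial_\infty\widetilde X_0$) that $Q$ is an honest neighborhood of $g$, so that the support condition yields $\alpha(Q)>0$, while $Q^\perp$ consists entirely of geodesics transverse to $g$ and hence disjoint from the lamination. Once this is in place, the non-crossing property of leaves and the orthogonal-box identity of Lemma~\ref{lem:LiouvilleMassOrthoBox} close the argument immediately.
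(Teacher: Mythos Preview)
Your proof is correct and follows essentially the same strategy as the paper: use Theorem~\ref{thm:LimitEarthquakes} to place $\langle\alpha\rangle$ in the closure of $\PL\bigl(\T(X_0)\bigr)$, then argue it is not already in the image. The only difference is in this last step, where the paper observes more directly that Liouville currents have full support in $G(\widetilde X_0)$ while a measured geodesic lamination cannot, whereas you reach the same contradiction via the orthogonal-box identity of Lemma~\ref{lem:LiouvilleMassOrthoBox}; both arguments are short and essentially equivalent.
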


\begin{proof}
 Theorem~\ref{thm:LimitEarthquakes} shows that every projective bounded measured geodesic lamination $\langle \alpha\rangle \in \PML(X_0)$ is in the closure of the image of the projective Liouville embedding $\PL \colon \T(X_0) \to \PCC(X_0)$. A Liouville geodesic current has full support in $G(\widetilde X_0)$, and a measured geodesic lamination cannot have full support. It follows that  $\langle \alpha\rangle  \in \PML(X_0)$  does not belong to the image $\PL \big( \T(X_0) \big)$, and therefore is in the Thurston boundary of $\T(X_0)$ by definition of this boundary. 
\end{proof}

\subsection{Elementary earthquakes}
\label{subsect:ElemEartqke}

 The construction of the earthquake deformations $E^\alpha [f]$ is based on the following special case. 

Let $\widetilde X_0$ be a simply connected conformally hyperbolic Riemann surface. (We are using a tilde in the notation to remind the reader that the surface is simply connected, and therefore equal to its universal cover.) In particular, $\widetilde X_0$ is biholomorphically equivalent to the unit disk~$\D$.  

 For a geodesic  $g\in G(\widetilde X_0)$ and a number $t\in \R$, the \emph{elementary earthquake of amplitude $t$ along $g$} is the homeomorphism  $E_g^t\colon   \T(\widetilde X_0) \to \T(\widetilde X_0)$ defined as follows. 
 
 Let $[f] \in \T(\widetilde X_0)$ be a point in the Teichm\"uller space of $\widetilde X_0$, represented by a quasiconformal diffeomorphism $f\colon \widetilde X_0 \to \widetilde X_1$. If $g_1$ is the geodesic of $\widetilde X_1$ that is the image of $g$ under the map $f\colon G(\widetilde X_0) \to G(\widetilde X_1)$ induced by $f$, and let $\phi_t \colon \widetilde X_1 \to \widetilde X_1$ be the hyperbolic isometry that preserves $g_1$ and acts by translation of $t\in \R$ along $g_1$ for the orientation of $g_1$. Then $E_g^t[f] \in \T(\widetilde X_0)$ is represented by any quasiconformal extension of the quasisymmetric homeomorphism $E^t_g f \colon \partial_\infty \widetilde X_0 \to \partial_\infty \widetilde X_1$ that coincides with $f$ on the component of $\partial_\infty \widetilde X_0 - \partial g$ that sits to the left of $g$, and with $\phi_t \circ f$ on the other component of $\partial_\infty \widetilde X_0 - \partial g$. Equivalently, $E_g^t[f]$ is represented by the quasisymmetric homeomorphism $\phi_t^{-1} \circ E^t_g f \colon \partial_\infty \widetilde X_0 \to \partial_\infty \widetilde X_1$ that coincides with $\phi_t^{-1} \circ f$ on the component of $\partial_\infty \widetilde X_0 - \partial g$ that sits to the left of $g$, and with $f$ on the other component of $\partial_\infty \widetilde X_0 - \partial g$. 
 
From the fact that $\phi_t$ is an isometry of $\widetilde X_1$, it easily follows that reversing the orientation of the geodesic $g$ does not change $E_g^t[f] \in \T(\widetilde X_0)$.

General earthquakes $E^\alpha \colon \T(\widetilde X_0) \to \T(\widetilde X_0)$ are constructed from elementary earthquakes as follows. 

First consider the case where $\delta \in \ML(\widetilde X_0)$ is a Dirac measure with finite support $\{ g_1, g_2, \dots, g_k, \bar g_1, \bar g_2, \dots, \bar g_k \} \subset G(\widetilde X_0)$, where $\bar g_i = \tau (g_i)$ is  obtained by reversing the orientation of the geodesic $g_i \in  G(\widetilde X_0)$. Then, $E^\delta$ is defined as
$$
E^\delta = E_{g_1}^{d_1} \circ E_{g_2}^{d_2}\circ  \dots\circ  E_{g_k}^{d_k}
$$
where $d_i = \delta\big( \{g_i\} \big) = \delta\big( \{\bar g_i\} \big) $. Note that the elementary earthquakes $E_{g_i}^{d_i} \colon \T(\widetilde X_0) \to \T(\widetilde X_0) $ commute because the geodesics $g_i$ are disjoint. 

In the general case, we approximate the measured geodesic lamination $\alpha \in \ML(\widetilde X_0)$ by Dirac measures $\delta$ as above, and  define
$$
E^\alpha [f] = \lim_{\delta \to \alpha} E^\delta[f]
$$
for every $[f]\in \T(\widetilde X_0)$, where the limit is taken as the Dirac measure $\delta$ tends to $\alpha$ for the weak* topology. The boundedness of $\alpha$ is used to show that the limit really exists. See \cite{Thu1, EpM, Sar1} for details. 

When $\widetilde X_0$ is the universal cover of a conformally hyperbolic Riemann surface $X_0$ and when $\alpha \in \ML(X_0) \subset \ML(\widetilde X_0)$, the above construction is equivariant with respect to the action of $\pi_1(X_0)$ on $ \T(\widetilde X_0)$, and the earthquake $E^\alpha \colon \T(\widetilde X_0) \to \T(\widetilde X_0)$  therefore descends to a continuous map $E^\alpha \colon \T( X_0) \to \T( X_0)$.

\subsection{Two lemmas on elementary earthquakes}

We will make frequent use of the following two lemmas.

\begin{lem}
\label{lem:ElementaryEarthquake}
Let $Q = [a,b]\times [c,d]$ be a box of geodesics in $G(\widetilde X_0)$, and let  $g\in G(\widetilde X_0)$ be a geodesic with endpoints $x$, $y\in \partial_\infty \widetilde X_0 - \{a,b,c,d\}$. Consider the image $E_{g}^t [f]$ of $[f]\in \T(\widetilde X_0)$ under the elementary earthquake of amplitude $t>0$ along $g$. 

\begin{enumerate}
 \item[(0)] If $x$ and $y$ are in the same component of $\partial_\infty \widetilde X_0 - \{a,b,c,d\}$, then $L _{E^t_{g}[f]} (Q) = L _{[f]} (Q)$ is independent of $x$ and $y$.
 \item[(a)]  It $x\in \left] a,b \right[$ and $y \in \left] c,d\right[$ as in Figure~{\upshape \ref{fig:ElementaryEarthquakes}(a)}, $L _{E^t_{g}[f]} (Q) $ is a decreasing function of $x$ and $y$ for the boundary orientation of $\partial_\infty \widetilde X_0$.
  \item [(b)] It $x\in \left] b,c \right[$ and $y \in \left] d,a \right[$ as in Figure~{\upshape \ref{fig:ElementaryEarthquakes}(b)}, $L _{E^t_{g}[f]} (Q) $ is an increasing function of $x$ and  $y$.
\end{enumerate}
\end{lem}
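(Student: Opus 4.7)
The plan is to transport the computation via the quasisymmetric boundary extension $f\colon\partial_\infty\widetilde X_0\to\partial_\infty\widetilde X_1$ (Theorem~\ref{thm:Beurling-Ahlfors}), and to express every Liouville mass as the logarithm of a cross-ratio by appealing to Lemma~\ref{lem:LiouvilleAndCrossratios}. The earthquake boundary map $E^t_g f$ agrees with $f$ on one side of $g$ and with $\phi_t\circ f$ on the other, where $\phi_t\in\HHH(\widetilde X_1)$ is the hyperbolic translation by $t$ along $g_1=f(g)$. The image box in $G(\widetilde X_1)$ therefore has four corners, each equal to either $f(\,\cdot\,)$ or $\phi_t f(\,\cdot\,)$ at the corresponding corner of $Q$, depending on which side of $g$ it lies on.

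In case~(0), the hypothesis that $x$ and $y$ lie in a common component of $\partial_\infty\widetilde X_0-\{a,b,c,d\}$ forces all four of $f(a),f(b),f(c),f(d)$ onto a single side of $g_1$, so $E^t_g f$ equals either $f$ or $\phi_t\circ f$ on all of them. Since $\phi_t$ preserves $L_{\widetilde X_1}$, the image box has the same Liouville mass as $f(Q)$, giving $L_{E^t_g[f]}(Q)=L_f(Q)$, a quantity manifestly independent of $x$ and $y$.

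In case~(a), $g$ separates $\{a,d\}$ from $\{b,c\}$, so the image box in $G(\widetilde X_1)$ has corners $f(a),\phi_t f(b),\phi_t f(c),f(d)$ in counterclockwise order. To establish the monotonicity, I would fix a biholomorphic identification $\widetilde X_1\cong\HH$ sending $g_1$ to the positive imaginary axis, so that $\phi_t$ becomes the dilation $z\mapsto\E^t z$, and compute the cross-ratio of these four corners using Lemma~\ref{lem:LiouvilleAndCrossratios}. Holding this normalization fixed and letting $p=f(x)$ and $q=f(y)$ vary (so that $\phi_t=\phi_{p,q,t}$ is an explicit two-parameter family of hyperbolic M\"obius transformations of $\HH$), direct differentiation of the resulting cross-ratio with respect to $p$ and $q$ shows that $L_{E^t_g[f]}(Q)$ decreases as $x$ (respectively $y$) moves counterclockwise in its interval.

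Case~(b) reduces to case~(a) via the orthogonal box: when $x\in\left]b,c\right[$ and $y\in\left]d,a\right[$, the geodesic $g$ belongs to $Q^\perp=[b,c]\times[d,a]$, so case~(a) applied to $Q^\perp$ shows that $L_{E^t_g[f]}(Q^\perp)$ decreases as $x$ and $y$ move counterclockwise. The identity $\E^{-L(Q)}+\E^{-L(Q^\perp)}=1$ of Lemma~\ref{lem:LiouvilleMassOrthoBox}---valid for both $L_f$ and $L_{E^t_g[f]}$, since each is a pullback of a Liouville measure by the boundary extension of a quasiconformal diffeomorphism---then converts the decrease for $Q^\perp$ into the claimed increase for $Q$. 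I expect the main obstacle to be the cross-ratio differentiation in case~(a), where tracking the counterclockwise orderings of the image corners and the explicit dependence of $\phi_{p,q,t}$ on $p$ and $q$ makes the computation somewhat involved, though it remains entirely mechanical once the normalization is fixed.
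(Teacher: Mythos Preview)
Your proposal is correct and follows essentially the same approach as the paper: cases~(0) and~(b) are argued exactly as in the paper, and case~(a) is handled by the same cross-ratio differentiation. The only refinement is that the paper normalizes by sending $f(y)$ to $\infty$ in $\HH$, so that the translation along $g_1$ becomes the affine map $z\mapsto \E^t z+\xi-\E^t\xi$ with $\xi=f(x)$; this makes the derivative in $\xi$ a one-line computation and avoids the two-parameter family $\phi_{p,q,t}$ you anticipated.
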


The statement is expressed in a more pictorial way by Figure~\ref{fig:ElementaryEarthquakes}. 


\begin{figure}[htbp]

\SetLabels
(  .16* -.25 ) (a) \\
(  .83* -.25 ) (b) \\
( 0.03 *  0.06)  $a$ \\
( 0.32 * 0.06 ) $b$  \\
( 0.31 * .88 ) $c$  \\
( 0.03 * .88 ) $d$  \\
( 0.69 *  0.06)  $a$ \\
( 0.98 * 0.06 ) $b$  \\
( 0.97 * .88 ) $c$  \\
( 0.69 * .88 ) $d$  \\
(0.26 * -0.04 ) $x$  \\
(1.0 *  .74) $x$  \\
(0.19 *  1.05) $y$  \\
( .64*  .56) $y$  \\
(0.22 *  .5) $g$  \\
(0.83 *  .47) $g$  \\
\endSetLabels
\centerline{\AffixLabels{\includegraphics{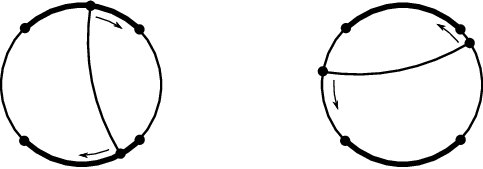}}}
\vskip 25 pt

\caption{The arrows indicate the direction in which the endpoints of $g$ can be moved  in order to increase $L _{E_g^t [f]} (Q)$ when $t>0$}
\label{fig:ElementaryEarthquakes}
\end{figure}


\begin{proof}[Proof of Lemma~{\upshape\ref{lem:ElementaryEarthquake}(0)}] If $x$ and $y$ are in the same component of $\partial_\infty \widetilde X_0 - \{a,b,c,d\}$, let  $[f]$ be represented by  a quasisymmetric  homeomorphism $f\colon \partial_\infty \widetilde X_0 \to \partial_\infty \widetilde X_1$. Then, by definition of the elementary earthquake, $E_g^t[f]$ is represented by a quasisymmetric homeomorphism $E_g^tf$ that coincides with $f$ at the points $a$, $b$, $c$, $d$. If follows that  $E_g^tf(Q)=f(Q)$ in $G(\widetilde X_1)$, so that $L _{E^t_{g}[f]} (Q) = L _{[f]} (Q)$. \end{proof}

\begin{proof}[Proof of Lemma~{\upshape\ref{lem:ElementaryEarthquake}(a)}] 
In this second case (a), we can represent $[f]$ by a quasiconformal diffeomorphism $f\colon\widetilde X_0 \to \HH$ valued in the upper half-space 
$$
\HH = \{ z \in \C; \mathrm{Im}( z)>0 \}. 
$$
In addition, we can arrange that $f(y)=\infty$, and set $\alpha=f(a)$, $\beta=f(b)$, $\gamma= f(c)$, $\delta=f(d)$ and $\xi=f(x) $. Note that $\delta < \alpha < \xi < \beta < \gamma$ in $\R$. 

Then, by Lemma~\ref{lem:LiouvilleAndCrossratios},
$$
L _{[f]} (Q) = L_{\HH} \big( [\alpha, \beta] \times [ \gamma, \delta] \big) =  \log\frac{(\alpha - \gamma)(\beta -\delta)}{(\alpha -\delta)(\beta -\gamma)} . 
$$
Also, the hyperbolic isometry of $\HH$ that acts by translation of $t$ along the geodesic $\xi\infty$ is the map $z\mapsto \E^t z +\xi-\E^t \xi$. Therefore
\begin{align*}
\frac{\mathrm d}{\mathrm d\xi} L _{E_g^t [f]} (Q)& =\frac{\mathrm d}{\mathrm d\xi}  \log\frac{( \alpha- \E^t \gamma- \xi +\E^t \xi )(\E^t \beta +\xi-\E^t \xi -\delta)}{(\alpha -\delta)(  \E^t \beta -\E^t \gamma )}\\
&= \frac{-1+\E^t}{ \alpha- \E^t \gamma- \xi +\E^t \xi} + \frac{1-\E^t}{\E^t \beta +\xi-\E^t \xi -\delta}\\
&= \frac{1-\E^t}{ (\xi- \alpha ) + \E^t (\gamma- \xi )} + \frac{1-\E^t}{\E^t( \beta -\xi)+ (\xi- \delta)}\ <0
\end{align*}
where the inequality comes from the fact that $\delta < \alpha < \xi < \beta < \gamma$ and $t>0$. 

It follows that $L _{E_g^t [f]} (Q)$ is a decreasing function of $\xi = f(x)\in \R$, and therefore of the endpoint $x\in \partial_\infty \widetilde X_0$ of the geodesic $g$. 

By symmetry, $L _{E_g^t [f]} (Q)$ is also a decreasing function of the endpoint $y$.
\end{proof}

\begin{proof}[Proof of Lemma~{\upshape\ref{lem:ElementaryEarthquake}(b)}] Consider the orthogonal box $Q^\perp$ of $Q$. Case~(a) shows that $L _{E_g^t [f]} (Q^\perp)$ is a decreasing function of the endpoints $x$ and $y$. The relation between $L _{E_g^t [f]} (Q)$ and $L _{E_g^t [f]} (Q^\perp)$ provided by Lemma~\ref{lem:LiouvilleMassOrthoBox} then shows that  $L _{E_g^t [f]} (Q)$  is an increasing function of $x$ and $y$. 
\end{proof}

\begin{lem}
\label{lem:DiagonalElementaryEarthquake}
Let $E_{ac}^t \colon \T(\widetilde X_0) \to \T(\widetilde X_0)$ be the elementary earthquake associated to the diagonal geodesic $ac$  of the box $Q=[a,b]\times [c,d]$. Then, for every $[f] \in  \T(\widetilde X_0)$ and every $t> 0$,
$$
 t+ \log \big( \E^{L _{[f]} (Q)}-1\big)<
L _{E_{ac}^t [f]} (Q) < t + L _{[f]} (Q).
$$
\end{lem}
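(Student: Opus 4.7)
My plan is to follow the strategy of the proof of Lemma~\ref{lem:ElementaryEarthquake}(a), reducing the claim to an explicit crossratio computation in the upper half plane. Represent $[f]$ by a quasiconformal diffeomorphism $f\colon\widetilde X_0\to\HH$ chosen so that $f(c)=\infty$, which is possible after postcomposing with a suitable biholomorphism of $\HH$ (which preserves Liouville masses). Write $\alpha=f(a)$, $\beta=f(b)$, $\delta=f(d)\in\R$; the counterclockwise order $a,b,c,d$ on $\partial_\infty\widetilde X_0$ forces $\delta<\alpha<\beta$. Setting $u:=L_{[f]}(Q)$, the crossratio formula of Lemma~\ref{lem:LiouvilleAndCrossratios}, specialised to $\gamma=\infty$, yields
\[
u \ =\ \log\frac{\beta-\delta}{\alpha-\delta},
\qquad\text{equivalently}\qquad
\alpha-\delta \ =\ \frac{\beta-\alpha}{\E^u-1}.
\]

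Next I compute the effect of the elementary earthquake $E_{ac}^t$. In $\HH$, the image of the oriented geodesic $ac$ is the vertical ray from $\alpha$ to $\infty$, oriented upward, and the hyperbolic translation of amplitude $t$ along it is $\phi_t\colon z\mapsto\alpha+\E^t(z-\alpha)$. The counterclockwise ordering places $\beta$ in the right-hand (east) component of $\partial_\infty\HH\setminus\{\alpha,\infty\}$ and $\delta$ in the left-hand (west) one. By the definition of elementary earthquake, $E_{ac}^t f$ therefore fixes $a,c$ and $d$, and sends $b$ to $\phi_t(\beta)=\alpha+\E^t(\beta-\alpha)$. A second application of Lemma~\ref{lem:LiouvilleAndCrossratios}, again with $\gamma=\infty$, then gives
\[
L_{E_{ac}^t[f]}(Q)
\ =\ \log\frac{\alpha+\E^t(\beta-\alpha)-\delta}{\alpha-\delta}
\ =\ \log\!\bigl(1+\E^t(\E^u-1)\bigr),
\]
using the relation $\alpha-\delta=(\beta-\alpha)/(\E^u-1)$ obtained above.

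Both inequalities in the conclusion then reduce to elementary manipulations of this closed form. The lower bound $t+\log(\E^u-1)<\log\!\bigl(1+\E^t(\E^u-1)\bigr)$ follows from $1+\E^t(\E^u-1)>\E^t(\E^u-1)$. The upper bound $\log\!\bigl(1+\E^t(\E^u-1)\bigr)<t+u$ is equivalent to $1+\E^t(\E^u-1)<\E^{t+u}$, i.e.\ to $1<\E^t$, which holds precisely because $t>0$.

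There is no serious obstacle in this argument; the only point requiring care is the orientation bookkeeping that places $\beta$ and $\delta$ on the correct sides of the vertical ray $\{\mathrm{Re}(z)=\alpha\}$, so that the earthquake acts on $\beta$ by the translation $\phi_t$ and fixes $\delta$.
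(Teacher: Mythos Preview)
Your proof is correct and follows essentially the same route as the paper's: both reduce to an explicit crossratio computation in $\HH$ after sending $c$ to $\infty$, obtain the closed form $L_{E_{ac}^t[f]}(Q)=\log\bigl(1+\E^t(\E^{u}-1)\bigr)$, and deduce the two inequalities by elementary manipulation. The only cosmetic difference is that the paper further normalizes $f(a)=0$ and $f(d)=-1$, writing the same quantity as $\log(\E^t\beta+1)$ with $\beta=\E^{u}-1$, which shortens the algebra by a line but changes nothing substantive.
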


\begin{proof}
Represent the class $[f] \in  \T(\widetilde X_0)$ by a quasiconformal map $f\colon \widetilde X_0 \to \HH$ such that $f(a) =0$, $f(b)=\beta$, $f(c) = \infty$ and $f(d) = -1$. Then, as  in the proof of Lemma~\ref{lem:ElementaryEarthquake}(a) (with $\alpha=\xi=0$, $\gamma=\eta = \infty$ and $\delta=-1$),
$$
L _{E_{ac}^t [f]} (Q) = \log (\E^t \beta+1).
$$
In particular, the case $t=0$ gives that $\beta= \E^{L _{[f]} (Q)}-1$. 

Then, because $t> 0$, 
$$
L _{E_{ac}^t [f]} (Q) = t+ \log (\beta+\E^{-t}) < t+ \log (\beta+1) = t+ L _{[f]} (Q)
$$
while
\begin{equation*}
L _{E_{ac}^t [f]} (Q) = t+ \log (\beta+\E^{-t}) > t+ \log (\beta) = t+ \log \big( \E^{L _{[f]} (Q)}-1\big). \qedhere
\end{equation*}
\end{proof}

\subsection{Simple convergence on boxes} 
This section is devoted to proving Lemma~\ref{lem:EarthquakesAndBoxes2}, which is a key technical step in the proof of Theorem~\ref{thm:LimitEarthquakes}. As a warm-up, we begin with a simpler statement. 

It will be convenient to say that, for a geodesic current $\alpha \in \CC(\widetilde X_0)$, the box $Q=[a,b] \times [c,d]$ is \emph{$\alpha$--generic} if the subset of $G(\widetilde X_0)$ consisting of those geodesics with one endpoint in $\{a,b,c,d\}$ has $\alpha$--mass 0. Using the countable additivity of $\alpha$, there can be at most  countably many $x\in \partial_\infty \widetilde X_0$ such that the set of geodesics passing through $x$ has positive $\alpha$--mass.  As a consequence, every box can be arbitrarily approximated by an $\alpha$--generic box. 

\begin{lem}
\label{lem:EarthquakesAndBoxes1}
Let $\alpha \in \ML(X_0)$ be a bounded measured geodesic lamination. Then, for every $\alpha$--generic box $Q \subset G(\widetilde X_0)$,
$$
\lim_{t\to + \infty} \frac1{t} \, \LL \big(E^{t \alpha}[f] \big) (Q) = \alpha(Q).
$$
\end{lem}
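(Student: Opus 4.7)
The plan is to decompose $\alpha$ according to how its support interacts with $Q$, then to approximate by finitely supported measures and iterate the elementary earthquake estimates.

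First I would split $\alpha = \alpha_Q + \alpha_{Q^\perp} + \alpha_{\mathrm{rest}}$, where the three summands are the restrictions of $\alpha$ to the sets of geodesics lying in $Q$, in $Q^\perp$, and in neither. Since $\alpha$ is a measured geodesic lamination, any geodesic in $Q$ crosses any geodesic in $Q^\perp$ inside $\widetilde X_0$, so the two cannot coexist in $\mathrm{Supp}(\alpha)$; at least one of $\alpha_Q$ and $\alpha_{Q^\perp}$ vanishes. If $\alpha_Q = 0$ and $\alpha_{Q^\perp} \neq 0$, then $\alpha(Q) = 0$ and, applying the argument below with $Q$ replaced by $Q^\perp$, one obtains $L_{E^{t\alpha}[f]}(Q^\perp) \sim t\alpha(Q^\perp) \to \infty$; Lemma~\ref{lem:LiouvilleMassOrthoBox} then yields $L_{E^{t\alpha}[f]}(Q) \to 0$, so $\frac1t L_{E^{t\alpha}[f]}(Q) \to 0 = \alpha(Q)$. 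Thus I would assume $\alpha_{Q^\perp} = 0$, so that $\alpha(Q) = \alpha_Q(Q)$.

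Next I would approximate $\alpha$ by finitely supported Dirac measures $\alpha^{(n)} = \sum_j c_j^{(n)} \delta_{g_j^{(n)}}$, supported on pairwise disjoint geodesics of $\mathrm{Supp}(\alpha)$, chosen so that $\alpha^{(n)} \to \alpha$ weakly and $\alpha^{(n)}(Q) \to \alpha(Q)$; this is possible because $Q$ is $\alpha$-generic. For each $n$, disjointness of the supports makes the elementary earthquakes commute and lets us factor $E^{t\alpha^{(n)}} = E^{t\alpha^{(n)}_Q} \circ E^{t\alpha^{(n)}_{\mathrm{rest}}}$. The factor $E^{t\alpha^{(n)}_{\mathrm{rest}}}$ applies finitely many elementary earthquakes along geodesics that are either disjoint from $Q$ (hence preserve $L(Q)$ by Lemma~\ref{lem:ElementaryEarthquake}(0)) or have endpoints straddling a single corner of $Q$; a direct cross-ratio computation parallel to Lemma~\ref{lem:DiagonalElementaryEarthquake} shows that in the latter case $L_{E^\tau_g[f]}(Q)$ converges to a \emph{finite} limit as $\tau \to \infty$, so $L_{E^{t\alpha^{(n)}_{\mathrm{rest}}}[f]}(Q)$ stays uniformly bounded in $t$ with bounds depending only on $\alpha^{(n)}$.

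Iterating the elementary earthquakes $E^{tc_j^{(n)}}_{g_j^{(n)}}$ for $g_j^{(n)} \in Q$ on the intermediate state, Lemma~\ref{lem:DiagonalElementaryEarthquake} (applied to the diagonal of the box at hand) combined with the monotonicity in Lemma~\ref{lem:ElementaryEarthquake}(a) shows that each such step adds $tc_j^{(n)} + O(1)$ to $L(Q)$, provided $L(Q)$ remains bounded away from $0$ and $\infty$ between steps. Summing gives $L_{E^{t\alpha^{(n)}}[f]}(Q) = t\,\alpha^{(n)}(Q) + O(1)$ uniformly in $t$, hence $\frac1t L_{E^{t\alpha^{(n)}}[f]}(Q) \to \alpha^{(n)}(Q)$ as $t \to \infty$. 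Passing to $n \to \infty$ using $\alpha^{(n)}(Q) \to \alpha(Q)$ and the continuity of the earthquake construction in $\alpha$ then yields the claim.

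The main obstacle will be uniform control of the $O(1)$ error terms across the Dirac approximations. In particular, the ``type III'' contributions can accumulate near the four corners of $Q$, and the boundedness hypothesis on $\alpha$ is essential to prevent this accumulation from exploding and to legitimate the interchange of the limits $t \to \infty$ and $n \to \infty$.
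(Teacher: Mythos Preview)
Your overall strategy---split off $\alpha_{Q^\perp}$ and handle it via Lemma~\ref{lem:LiouvilleMassOrthoBox}, then treat the $Q$--part by iterating the elementary estimates of Lemmas~\ref{lem:ElementaryEarthquake} and~\ref{lem:DiagonalElementaryEarthquake}---matches the paper's. But two steps in your write-up do not go through as stated.

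\textbf{The corner contributions are not $O(1)$.} For a \emph{fixed} finitely supported $\alpha^{(n)}$ your claim that $L_{E^{t\alpha^{(n)}_{\mathrm{rest}}}[f]}(Q)$ stays bounded in $t$ is correct: a single elementary earthquake along a geodesic encircling, say, the corner $b$ moves $b$ monotonically towards an endpoint of that geodesic in $\left]b,c\right[$, so $L(Q)$ increases to a finite limit. However, that finite limit blows up as the endpoint approaches $c$, so the bound is \emph{not} uniform in $n$. As you refine the Dirac approximation, corner geodesics accumulate with endpoints arbitrarily close to the adjacent corners, and your $O(1)$ constants escape to infinity. Your proposed fix---invoking boundedness of $\alpha$---is the wrong tool: the paper never uses boundedness in this lemma, and boundedness says nothing about how much mass sits near a single point of $\partial_\infty\widetilde X_0$. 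What controls these contributions is \emph{$\alpha$--genericity}: the mass of geodesics with an endpoint in $[c',c]$ is $\alpha\big([a,b]\times[c',c]\big)$, which tends to $\alpha\big([a,b]\times\{c\}\big)=0$ as $c'\to c$. The paper exploits this by introducing auxiliary points $a'\in\left]d,a\right[$, $c'\in\left]b,c\right[$ and splitting each corner piece into a part bounded independently of $t$ (earthquake along geodesics with endpoint in $[b,c']$, which moves $b$ at most to $c'$) and a part bounded by $t\,\alpha(Q'_{c'})$ with $\alpha(Q'_{c'})\to 0$ as $c'\to c$. This is how the limsup $\leq\alpha(Q)$ is obtained; the corner terms are linear in $t$ with arbitrarily small slope, not bounded.

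\textbf{The limit interchange is not justified.} Because of the above, your equality $L_{E^{t\alpha^{(n)}}[f]}(Q)=t\,\alpha^{(n)}(Q)+O(1)$ holds only with an $n$--dependent implied constant, and you cannot legitimately pass from $\lim_n\lim_t$ to $\lim_t\lim_n$. The paper avoids this interchange entirely: it uses Dirac approximations only to establish, at each \emph{fixed} $t$, inequalities of the form
\[
t\,\alpha(Q'')+\log\big(\E^{L_{[f]}(Q''')}-1\big)\ \leq\ L_{E^{t\alpha}[f]}(Q)\ \leq\ L_{[f]}(Q_{a'c'}'')+t\,\alpha(Q_{a'}')+t\,\alpha(Q_{c'}')+t\,\alpha(Q),
\]
valid for the actual $\alpha$ (not just an approximation), and only then divides by $t$ and lets $t\to\infty$. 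The liminf and limsup are then squeezed to $\alpha(Q)$ by choosing the auxiliary boxes appropriately. In short: pass to the Dirac limit inside each inequality at fixed $t$, not after taking $t\to\infty$.
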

\begin{proof} As usual, let the box $Q$ be described as $Q=[a,b]\times[c,d]$ with $a$, $b$, $c$, $d\in \partial_\infty \widetilde X_0$. 

We will split the proof into several steps. 

\medskip
\noindent\textsc{Step 1.}  \emph{ $\displaystyle \liminf_{t\to + \infty} \frac1{t} \, \LL \big(E^{t \alpha}[f]\big) (Q) \geq \alpha(Q)$.}
\medskip

We only need to consider the case where $ \alpha(Q)>0$.

Then, because of the hypothesis that $Q$ is $\alpha$--generic, there is a strictly smaller box $Q' = [a, b'] \times [c, d']$ such that $a<b'<b$, $c<d'<d$ and $\alpha(Q')$ is arbitrarily close to $\alpha(Q)$. Since $\alpha(Q')$ is close to $\alpha(Q)>0$ it is different from $0$, and $Q'$ meets the support of $\alpha$. Among the (disjoint) geodesics of the support of $\alpha$ that are contained in $Q'$, let $a''d''$ be the one that is closest to the interval $[d', a] \subset \partial_\infty \widetilde X_0$, and let $b''c''$ be the one closest to $[b', c]$, in such a way that $a \leq a'' \leq b'' \leq b'$ and $c \leq c'' \leq d'' \leq d'$.  See Figure~\ref{fig:EarthquakeLimit1}.

\begin{figure}[htbp]

\SetLabels
( .08 * .11 )  $a$\\
(  .92* .11 )  $b$\\
( .91 * .88 )  $c$\\
( .08 * .88 ) $d$ \\
 (.17 *  -.02) $a''$ \\
( .72 *  -.06 )  $b''$\\
( .84 *  -.0 ) $b'$ \\
( .8 * .96 )  $c''$\\
( .2 * .94 )  $d'$\\
( .33 * .99 )  $d''$\\
\endSetLabels
\centerline{\AffixLabels{\includegraphics{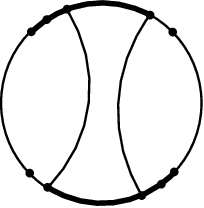}}}
\caption{Step 1 of the proof of Lemma~\ref{lem:EarthquakesAndBoxes1}}
\label{fig:EarthquakeLimit1}
\end{figure}

We now consider the box $Q'' = [a'', b] \times [c'', d]$. Our construction is specially designed that the geodesics $g$ of the support of $\alpha$ are of four distinct types with respect to  $Q'' = [a'', b] \times [c'', d]$: 
\begin{enumerate}
 \item $g$ has both endpoints in the closure of the same component of $ \partial_\infty \widetilde X_0 - \{ a'', b, c'', d\}$;
 \item $g$ has one endpoint in $[a'', b]$ and one endpoint in $[b,c'']$;
 \item $g$ has one endpoint in $[c'', d]$ and one endpoint in $[d, a]$;
 \item $g$ has one endpoint in $[a'',b]$ and another endpoint in $[c'', d]$.
 \end{enumerate}
Indeed, the presence of the geodesics $a'' d''$ and $b''c''$ in the support of $\alpha$ excludes all other cases. 

We can therefore decompose $\alpha$ as a sum of measured geodesic laminations
$$
\alpha = \alpha_o + \alpha_b + \alpha_d + \alpha_{Q''}
$$
where 
\begin{itemize}
 \item the support of $\alpha_b$ consists of geodesics of type (2), which encircle the point $b$;
 \item the support of $\alpha_d$ consists of geodesics of type (3), which encircle the point $d$;
 \item the support of $\alpha_{Q''}$ consists of geodesics of type (4), which are contained in the box $Q''$ (after a possible orientation reversal);
 \item the support of $\alpha_o$ consists of geodesics of type (1) (where $o$ stands for ``other''). 
\end{itemize}

 This decomposes the earthquake $E^{t\alpha}\colon \T(\widetilde X_0) \to  \T(\widetilde X_0)$ as a composition 
$$
E^{t\alpha}=  E^{t\alpha_o} \circ E^{t\alpha_d} \circ E^{t\alpha_b} \circ E^{t\alpha_{Q''}} .
$$
For notational convenience, set $[f_1] = E^{t\alpha_{Q''}} [f]$, $[f_2] = E^{t\alpha_b} [f_1]$, $[f_3] = E^{t\alpha_d} [f_2]$ and $[f_4] = E^{t\alpha_o} [f_3] =E^{t\alpha}[f] $.

We begin by estimating $\LL\big( [f_1] \big)(Q'') = \LL \big( E^{t\alpha_{Q''}} [f] \big)(Q'')$. 

If we approximate the measured lamination $\alpha_{Q''}$ by a Dirac measure supported on a finite set $\{ g_1, g_2, \dots, g_k, \bar g_1, \bar g_2, \dots, \bar g_k\}$ of disjoint geodesics  in the support of $\alpha_{Q''}$ and assigning mass $a_i>0$ to the atom $g_i$,  then by construction  $E^{t\alpha_Q''}$ is approximated by the product of elementary earthquakes  
$$E^{ta_1}_{g_1} \circ E^{ta_1}_{g_2} \circ \dots \circ E^{ta_n}_{g_n}.$$
By definition of $a''$ and $c''$, the geodesics of the support of $\alpha_{Q''}$ actually have one endpoint in $[a'', b''] \subset [a'', b']$ and one endpoint in $[c'', d''] \subset [c'', d']$. Lemma~\ref{lem:ElementaryEarthquake}(a)  shows that, for each such geodesic $g$, 
$$
 \LL \big( E_g^u [f'] \big)(Q) \geq  \LL \big( E_{b'd'}^u [f'] \big)(Q)
$$
for every $[f'] \in \T(\widetilde X_0)$ and every $u>0$. It follows that 
$$
 \LL \big(E^{ta_1}_{g_1}  E^{ta_1}_{g_2}  \dots  E^{ta_n}_{g_n}[f]\big)  (Q'') \geq 
  \LL \big(E^{t(a_1+a_2+\dots+a_n)}_{b'd'}[f]\big)  (Q'')
$$
and, passing to the limit as we improve the approximation of $\alpha_{Q''}$ by Dirac measures, that
$$
\LL\big( [f_1] \big)(Q'') = \LL \big(E^{t\alpha_{Q''}}[f]\big)  (Q'') \geq 
  \LL \big(E^{t\alpha(Q'')}_{b'd'}[f]\big)  (Q'')
$$
for every $t>0$. 

The box $Q'' = [a'', b] \times [c'', d]$ contains the box $Q'''= [b', b] \times [d', d]$. Lemma~\ref{lem:DiagonalElementaryEarthquake} then shows that
\begin{equation}
\label{eqn:EquakeLowerEstim1}
 \begin{split}
 \LL\big( [f_1] \big)(Q'') & \geq 
  \LL \big(E^{t\alpha(Q'')}_{b'd'}[f]\big)  (Q'')  \geq 
  \LL \big(E^{t\alpha(Q'')}_{b'd'}[f]\big)  (Q''') \\
  & \geq  t\alpha(Q'') + \log \big(\E^{L_{[f]}(Q''')}-1 \big).
\end{split}
\end{equation}

After this estimate for $ \LL\big( [f_1] \big)(Q'') $, we now consider $[f_2] = E^{t\alpha_b}[f_1]$. By construction, the Liouville current $ \LL\big ([f_2] \big) =  \LL\big (E^{t\alpha_b}[f_1] \big)$ is the pullback of $ \LL\big ([f_1] \big) $ by a homeomorphism of $G(\widetilde X_0)$ that sends $Q''=[a'', b] \times [c'', d]$ to a larger box  $Q_1''=[a'', b_1] \times [c'', d]$ with $b\leq b_1<c''$. Therefore,
\begin{equation}
\label{eqn:EquakeLowerEstim2}
  \LL\big ([f_2] \big)(Q'')=  \LL\big (E^{t\alpha_b}[f_1] \big)(Q'') =  \LL\big ([f_1] \big) (Q''_1) \geq  \LL\big ([f_1] \big)(Q'')
\end{equation}
since $Q_1''$ contains $Q''$.

Similarly, 
\begin{equation}
\label{eqn:EquakeLowerEstim3}
  \LL\big ([f_3] \big)(Q'') = \LL\big (E^{t\alpha_d}[f_2] \big)(Q'') \geq  \LL\big ([f_2] \big)(Q'').
\end{equation}

Finally, $ \LL\big ([f_4] \big) =  \LL\big (E^{t\alpha_o}[f_3] \big)$ is the pullback of $ \LL\big ([f_3] \big) $ by a homeomorphism of $G(\widetilde X_0)$ that sends $Q''$ to itself. Therefore
\begin{equation}
\label{eqn:EquakeLowerEstim4}
  \LL\big ([f_4] \big)(Q'') = \LL\big ([f_3] \big)(Q'').
\end{equation}

Combining Equations~(\ref{eqn:EquakeLowerEstim1}--\ref{eqn:EquakeLowerEstim4}), we conclude that 
\begin{equation}
\label{eqn:EquakeLowerEstim5}
 \LL\big (E^{t\alpha}[f] \big)(Q) \geq \LL\big (E^{t\alpha}[f] \big)(Q'') =  \LL\big ([f_4] \big)(Q'') \geq  t\alpha(Q'') + \log \big(\E^{L_{[f]}(Q''')}-1 \big).
\end{equation}

We now use the key property that $b'<b$ and $d'<d$, so that the box $Q'''= [b', b] \times [d', d]$ has nonempty interior and $L_{[f]}(Q''')>0$. It consequently follows from (\ref{eqn:EquakeLowerEstim5}) that
$$
\liminf_{t\to +\infty}  \frac1t  \LL\big (E^{t\alpha}[f] \big)(Q) \geq \alpha(Q''). 
$$

By definition of the box $Q''$, its mass $\alpha(Q'')$ for the measured lamination $\alpha$ is equal to $\alpha(Q')$. Also, because $Q$ is $\alpha$--generic, the box $Q' = [a, b'] \times [c, d']$ can be chosen so that $\alpha(Q')$ is arbitrarily close to $\alpha(Q)$. It follows that
$$
\liminf_{t\to +\infty}  \frac1t  \LL\big (E^{t\alpha}[f] \big)(Q) \geq \alpha(Q),
$$
which completes the proof of this  Step 1.

\medskip
\noindent\textsc{Step 2.} \emph{If $\alpha(Q) > 0$, then $\displaystyle \limsup_{t\to + \infty} \frac1{t} \, \LL \big(E^{t \alpha}[f]\big) (Q) \leq \alpha(Q)$.}
\medskip

The property that $\alpha(Q) > 0$ prevents any geodesic of the support of $\alpha$ from having one endpoint in $[b,c]$ and one endpoint in $[d,a]$. As in Step 1, we can therefore break down $\alpha$ as a sum of measured laminations
$$
\alpha = \alpha_Q +\alpha_a +\alpha_b +\alpha_c +\alpha_d + \alpha_o
$$
where
\begin{itemize}
 \item each geodesic of the support of $\alpha_Q$ has one endpoint in $[a,b]$ and one endpoint in $[c,d]$, and therefore belongs to $Q=[a,b] \times [c,d]$ after a possible orientation-reversal;
 \item each geodesic of the support of $\alpha_a$ has one endpoint in $[d,a]$ and one endpoint in $[a,b]$, and therefore encircles $a$;
 \item each geodesic of the support of $\alpha_b$ has one endpoint in $[a,b]$ and one endpoint in $[b,c]$, and therefore encircles $b$;
 \item each geodesic of the support of $\alpha_c$ has one endpoint in $[b,c]$ and one endpoint in $[c,d]$, and therefore encircles $c$;
 \item each geodesic of the support of $\alpha_d$ has one endpoint in $[c,d]$ and one endpoint in $[d,a]$, and therefore encircles $d$;
 \item each geodesic of the support of $\alpha_o$ has its two endpoints in the closure of the same component of $\partial_\infty \widetilde X_0 - \{ a,b,c,d\}$. 
 \end{itemize}

Then,
$$
E^{t\alpha}[f] = E^{t\alpha_o} \circ E^{t\alpha_a} \circ E^{t\alpha_c} \circ E^{t\alpha_Q} \circ E^{t\alpha_b} \circ E^{t\alpha_d} [f].
$$

In order to estimate $ \LL \big(E^{t\alpha}[f]\big)  (Q)$, set $[f_1] = E^{t\alpha_d} [f]$, $[f_2] = E^{t\alpha_b} [f_1]$, $[f_3] = E^{t\alpha_Q} [f_2]$, $[f_4] = E^{t\alpha_c} [f_3]$, $[f_5] = E^{t\alpha_a} [f_4]$ and $[f_6] = E^{t\alpha_o} [f_5] =E^{t\alpha}[f] $.

We will proceed backwards in our estimates, beginning with the simpler cases.

By construction of earthquakes, $\LL \big(E^{t\alpha}[f]\big)  = \LL \big([f_6]\big) = \LL \big(E^{t\alpha_o} [f_5]\big)$ is the pullback of $\LL \big([f_5]\big)$ by a quasi-symmetric homeomorphism of $\partial_\infty \widetilde X_0$ which sends the box $Q$ to itself. Therefore,
\begin{equation}
\label{eqn:EquakeUpperEst1}
 \LL \big(E^{t\alpha}[f]\big)  (Q)= \LL \big([f_6]\big)  (Q)=\LL \big([f_5]\big) (Q) .
\end{equation}

Again by construction of earthquakes, $ \LL \big([f_5]\big)  = \LL \big(E^{t\alpha_a} [f_4]\big) $  is the pullback of $\LL \big([f_4]\big)  $ by a homeomorphism of $\partial_\infty \widetilde X_0$ which fixes the points $b$, $c$, $d$, and which moves the point $a$ in the positive direction of $\partial_\infty \widetilde X_0$. As a consequence, this homeomorphism sends the box $Q=[a,b]\times[c,d]$ to a smaller box $Q_1=[a_1,b]\times[c,d] \subset Q$ with $a_1 \in [a,b]$, and 
\begin{equation}
\label{eqn:EquakeUpperEst2}
\LL \big([f_5]\big) (Q) = \LL \big([f_4]\big) (Q_1) \leq \LL \big([f_4]\big) (Q).
\end{equation}

The same argument applied to $ \LL \big([f_4]\big)  = \LL \big(E^{t\alpha_c} [f_3]\big) $ shows that
\begin{equation}
\label{eqn:EquakeUpperEst3}
\LL \big([f_4]\big) (Q) = \LL \big([f_3]\big) (Q_2) \leq \LL \big([f_3]\big) (Q)
\end{equation}
for some box $Q_2=[a,b]\times[c_2,d] \subset Q$. 

We now use Lemmas~\ref{lem:ElementaryEarthquake} and \ref{lem:DiagonalElementaryEarthquake} to estimate $\LL \big([f_3]\big) (Q) = \LL \big(E^{t\alpha_Q} [f_2]\big) (Q)$.

If we approximate the measured lamination $\alpha_Q$ by a Dirac measure based at a finite set $\{ g_1, g_2, \dots, g_k, \bar g_1, \bar g_2, \dots, \bar g_k \}$ of disjoint geodesics  in $Q$ and assigning mass $a_i>0$ to the atom $g_i$,  then by construction  $E^{t\alpha_Q}$ is approximated by the product of elementary earthquakes  
$$E^{ta_1}_{g_1} \circ E^{ta_1}_{g_2} \circ \dots \circ E^{ta_n}_{g_n}.$$
If $ac$ denotes the diagonal of the box $Q$, going from $a$ to $c\in \partial_\infty \widetilde X_0$, Lemma~\ref{lem:ElementaryEarthquake}(a) shows that 
$$\LL \big(E^{ta_i}_{g_i}[f']\big)  (Q) \leq   \LL \big(E^{ta_i}_{ac} [f']\big) (Q) $$
for every $[f']\in \T(\widetilde X_0)$. The combination of Lemmas~\ref{lem:ElementaryEarthquake} and \ref{lem:DiagonalElementaryEarthquake} then shows that
\begin{align*}
 \LL \big(E^{ta_1}_{g_1}  E^{ta_1}_{g_2}  \dots  E^{ta_n}_{g_n}[f_2]\big)  (Q) &\leq   \LL \big(E^{t(a_1+a_2+\dots+a_n)}_{ac}[f_2]\big)  (Q)\\
 & \leq  \LL \big([f_2]\big)  (Q) +  t(a_1+a_2+\dots+a_n).
\end{align*}

Passing to the limit as we use better and better approximations of $\alpha_Q$  by Dirac measures, we conclude that 
\begin{equation}
\label{eqn:EquakeUpperEst4}
\LL \big([f_3]\big) (Q) = \LL \big(E^{t\alpha_Q} [f_2]\big) (Q)  \leq  \LL \big([f_2]\big)  (Q) +  t \alpha(Q).
\end{equation}

\begin{figure}[htbp]

\SetLabels
( .08 * .09 )  $a$\\
(  .91* .09 )  $b$\\
( .91 * .86 )  $c$\\
( .08 * .86 ) $d$ \\
( -.02 * .2 )  $a'$\\
( 1.01 * .73 )  $c'$\\
\endSetLabels
\centerline{\AffixLabels{\includegraphics{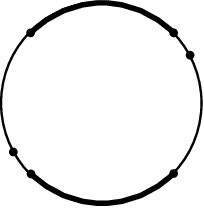}}}
\caption{Step 2 of the proof of Lemma~\ref{lem:EarthquakesAndBoxes1}}
\label{fig:EarthquakeLimit2}
\end{figure}

Estimating $\LL \big([f_2]\big) (Q) = \LL \big(E^{t\alpha_b} [f_1]\big) (Q)$ will require more care. In particular, we need to split the geodesics of the support of $\alpha_b$ into those that have one endpoint near $c$ and those that do not.  

Pick a point $c'$ in the open interval $\left] b,c\right[$ such that $\alpha \big( [a,b] \times \{c'\} \big)=0$, which can always be done by countable additivity of $\alpha$. We will later choose $c'$ close enough  to $c$ to ensure that $\alpha \big( [a,b] \times [c', c] \big)$ is small. See Figure~\ref{fig:EarthquakeLimit2}.

Let $\alpha_b'$ be the restriction of $\alpha$ to the box $[a,b] \times [c', c]$, and let $\alpha_b''$ be the restriction of $\alpha$ to $[a,b] \times [b, c']$. In particular, $\alpha_b = \alpha_b' + \alpha_b''$ by the property that $\alpha \big( [a,b] \times \{c'\}  \big)=0$.  

As in our analysis of $E^{t\alpha_a} [f_4]$ and $E^{t\alpha_c} [f_3]$, the Liouville current $\LL \big(E^{t\alpha_b''} [f_1]\big) $ is the pullback of $\LL \big([f_1]\big) $ under a homeomorphism of $\partial_\infty \widetilde X_0$ which fixes $a$, $c$, $d$ and moves $b$ to a point of the interval $[b,c']$. Therefore
$$
\LL \big(E^{t\alpha_b''} [f_1]\big)  (Q) \leq \LL \big([f_1]\big)  (Q_{c'}'') 
$$
where $Q_{c'}'' = [a,c'] \times [c,d]$. 

Then, as in our analysis of $E^{t\alpha_Q}[f_2]$, the combination of Lemmas~\ref{lem:ElementaryEarthquake} and \ref{lem:DiagonalElementaryEarthquake} gives that
\begin{equation}
\label{eqn:EquakeUpperEst5}
\begin{split}
\LL \big([f_2]\big) (Q) &= \LL \big(E^{t\alpha_b} [f_1]\big)  (Q) = \LL \big(E^{t\alpha_b'} E^{t\alpha_b''}[f_1]\big)  (Q) \\
&\leq \LL \big(E_{ac}^{t\alpha(Q_{c'}')} E^{t\alpha_b''}[f_1]\big)  (Q) \\
& \leq  \LL \big(E^{t\alpha_b''}[f_1]\big)  (Q) + t \alpha(Q_{c'}') \\
& \leq  \LL \big([f_1]\big)  (Q_{c'}'')  + t \alpha(Q_{c'}') 
\end{split}
\end{equation}
where $Q_{c'}'= [a,b] \times [c', c] $ and $Q_{c'}'' = [a,c'] \times [c,d]$. 

Similarly, to estimate $\LL \big([f_1]\big)  (Q_{c'}'') = \LL \big(E^{t\alpha_d} [f]\big) (Q_{c'}'')$, pick a point $a'$ in the open interval $\left] d,a\right[$ such that $\alpha \big( \{a'\} \times [c,d] \big)=0$, and split $\alpha_d$ as $\alpha_d = \alpha_d' + \alpha_d''$, where $\alpha_d'$ and $\alpha_d''$ are the respective restrictions of $\alpha_d$ to $[a', a] \times [c,d]$ and $[d,a'] \times [c,d]$. See Figure~\ref{fig:EarthquakeLimit2}.

Then, using the combination of Lemmas~\ref{lem:ElementaryEarthquake} and \ref{lem:DiagonalElementaryEarthquake} as in our analysis of $[f_2]= E^{t\alpha_b} [f_1]$, 
\begin{equation}
\label{eqn:EquakeUpperEst6}
\begin{split}
\LL \big([f_1]\big) (Q_{c'}'') &= \LL \big(E^{t\alpha_d} [f]\big)  (Q_{c'}'') = \LL \big(E^{t\alpha_d'} E^{t\alpha_d''}[f]\big)  (Q_{c'}'') \\
& \leq \LL \big(E^{t\alpha_d''}[f]\big)  (Q_{c'}'') + t \alpha(Q_{a'}') \\
& \leq  \LL \big([f]\big)  (Q_{a'c'}'')  + t \alpha(Q_{a'}') 
\end{split}
\end{equation}
where $Q_{a'}'= [a', a] \times [c,d] $ and $Q_{a'c'}'' = [a,c'] \times [c,a']$. 

Now, if we combine the estimates of (\ref{eqn:EquakeUpperEst1}--\ref{eqn:EquakeUpperEst6}), we get that 
\begin{equation}
\label{eqn:EquakeUpperEst7}
 \LL \big(E^{t\alpha}[f]\big)  (Q) \leq    \LL \big([f]\big)  (Q_{a'c'}'')  + t \alpha(Q_{a'}')  + t \alpha(Q_{c'}')  + t\alpha(Q) 
\end{equation}
for the boxes  $Q_{a'c'}'' = [a,c'] \times [c,a']$, $Q_{a'}'= [a', a] \times [c,d] $ and $Q_{c'}'= [a,b] \times [c', c] $.

Passing to the limit as $t$ tends to $\infty$, this gives
$$
\limsup_{t\to + \infty} \frac1{t} \, \LL \big(E^{t \alpha}[f]\big)  (Q) \leq   \alpha(Q_{a'}')  + \alpha(Q_{c'}') +\alpha(Q).  
$$

This property holds for any choice of points $a' \in \left] d,a\right[$ and $c' \in \left] b,c\right[$ (with $\alpha \big( [a,b] \times \{c'\} \big)=0$ and $\alpha \big( \{a'\} \times [c,d] \big)=0$). Letting $a'$ tend to $a$ and $c'$ tend to $c$, so that $\alpha(Q_{a'}')$ and $\alpha(Q_{c'}')$ respectively converge to $\alpha \big(  \{a\} \times [c,d]  \big)=0$ and $\alpha \big(  [a,b] \times \{c\}  \big)=0$ by our hypothesis that $Q$ is $\alpha$--generic, we conclude that
$$
\limsup_{t\to + \infty} \frac1{t} \, \LL \big(E^{t \alpha}[f]\big)  (Q) \leq   \alpha(Q).
$$
This concludes the proof of Step 2. 

In particular, the combination of Steps~1 and 2 shows that $\lim_{t\to + \infty} \frac1{t} \, \LL \big(E^{t \alpha}[f]\big)  (Q) =   \alpha(Q)$ when $\alpha(Q)>0$. 

We will rely on these first two  steps to settle the remaining cases. Recall that $Q^\perp$ denotes the orthogonal box of $Q$, as defined in \S \ref{subsect:GeodesicBoxes}. 

\medskip
\noindent\textsc{Step 3.} \emph{If $\alpha(Q)=0$ and  $\alpha( Q^\perp)> 0$, then $\displaystyle \lim_{t\to + \infty} \LL \big(E^{t \alpha}[f]\big) (Q) =0$. }
\medskip

We  rely on Lemma~\ref{lem:LiouvilleMassOrthoBox}, which shows that
\begin{equation}
\label{eqn:EquakeUpperEst8}
\E^{-\LL(E^{t \alpha}[f]) (Q) } +\E^{-\LL(E^{t \alpha}[f]) (Q^\perp) } =1.
\end{equation}
Because the box $Q$ is $\alpha$--generic, so is the orthogonal box $Q^\perp$. We can therefore apply  Step~1 to $Q^\perp$, which gives
$$
\liminf_{t\to + \infty} \frac1t \LL \big(E^{t \alpha}[f]\big) (Q^{\perp} ) \geq \alpha(Q^{\perp})>0
$$
and in particular implies that $ \LL(E^{t \alpha}[f])  (Q^{\perp} ) \to+ \infty$ as $t\to +\infty$. 

We  conclude that, as $t\to +\infty$, $\E^{-\LL(E^{t \alpha}[f]) (Q^\perp) } \to 0$  so that $\E^{-\LL(E^{t \alpha}[f]) (Q) } \to 1$ by (\ref{eqn:EquakeUpperEst8}), and therefore  $\LL(E^{t \alpha}[f]) (Q)  \to 0$.

\medskip
\noindent\textsc{Step 4.} \emph{If $\alpha(Q) =0$ and $\alpha( Q^\perp)=0$, then  $\displaystyle \lim_{t\to + \infty} \frac1t \LL \big(E^{t \alpha}[f]\big) (Q) =0$. }
\medskip

In the proof of Step 2, the only time we used the hypothesis that $\alpha(Q)>0$ was to guarantee that the support of $\alpha$ contained no geodesic of the interior of the orthogonal box $Q^\perp$. 

In the current setup of Step 4, the hypothesis that $\alpha(Q^\perp) =0$ implies that the support of $\alpha$ is disjoint from the interior of $Q^\perp$. We can therefore apply the arguments of Step 2  and conclude that
$$
 \limsup_{t\to + \infty} \frac1t \LL \big(E^{t \alpha}[f]\big) (Q) 
  \leq \alpha(Q)=0
$$
as required. 
\medskip

This concludes the proof of Lemma~\ref{lem:EarthquakesAndBoxes1}, by Steps 1 and 2 when $\alpha(Q)>0$, and by Steps 3 and 4 when $\alpha(Q)=0$. 
\end{proof}

We will need a more uniform version of Lemma~\ref{lem:EarthquakesAndBoxes1}.  The lemma below will allow us to enhance a weak* convergence to a uniform weak* convergence. Recall that the box $Q=[a,b] \times [c,d]$ is {$\alpha$--generic} if the subset of $G(\widetilde X_0)$ consisting of those geodesics with one endpoint in $\{a,b,c,d\}$ has $\alpha$--mass 0. 

\begin{lem}
\label{lem:EarthquakesAndBoxes2}
 Let $\{ \alpha_n \}_n$ be a sequence of bounded measured geodesic laminations converging,  as $n\to \infty$, to  a measure $\alpha$ on $G(\widetilde X_0)$ for the weak* topology. Then, for every sequence $\{t_n \}$ converging to $+\infty$ in $\R$ and for every $\alpha$--generic box $Q \subset G(\widetilde X_0)$ ,
$$
\lim_{n\to \infty} \frac1{t_n}\LL \big(E^{t_n \alpha_n}[f] \big)(Q) = \alpha(Q).
$$
\end{lem}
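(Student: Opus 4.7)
\medskip
\noindent\textbf{Proof plan.} I would prove Lemma~\ref{lem:EarthquakesAndBoxes2} by adapting both halves of the proof of Lemma~\ref{lem:EarthquakesAndBoxes1} to the uniform setting, substituting $\alpha_n$ for $\alpha$ and $t_n$ for $t$. The guiding principle is to pre-select all auxiliary boxes and corner points generically for the \emph{limit} measure $\alpha$, hence independently of $n$; then by Lemma~\ref{lem:Weak*ConvImpliesConvMasses} the $\alpha_n$-mass of each such box will converge to its $\alpha$-mass. The estimates from Steps~1--2 of Lemma~\ref{lem:EarthquakesAndBoxes1} will then survive the passage to $\liminf$ and $\limsup$ after division by $t_n$, the additive constants coming from Lemma~\ref{lem:DiagonalElementaryEarthquake} being $n$-independent because the reference boxes on which they depend are fixed.

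For the lower bound, I assume $\alpha(Q)>0$ (the case $\alpha(Q)=0$ being trivial) and fix $\varepsilon>0$. Pick $b'\in\,]a,b[$ and $d'\in\,]c,d[$ so that the boxes $Q'=[a,b']\times[c,d']$ and $Q'''=[b',b]\times[d',d]$ both have $\alpha$-null boundary, with $\alpha(Q')\geq \alpha(Q)-\varepsilon$ and $L_{[f]}(Q''')>0$; this uses the $\alpha$-genericity of $Q$, countable additivity of $\alpha$, and the fact that Liouville measure has full support. For $n$ large enough that $\alpha_n(Q')>0$, perform Step~1's decomposition on $\alpha_n$ using the extremal geodesics $a_n''d_n''$, $b_n''c_n''$ of $\mathrm{Supp}(\alpha_n)\cap Q'$, relative to the $n$-dependent box $Q_n''=[a_n'',b]\times[c_n'',d]$; the four-type classification is enforced exactly as before by the lamination property of $\alpha_n$. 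Because $Q'''\subset Q_n''$ (since $a_n''\leq b'$, $c_n''\leq d'$) and $\alpha_n(Q_n'')\geq \alpha_n(Q')$, the same chain of inequalities yields
$$
\LL\bigl(E^{t_n\alpha_n}[f]\bigr)(Q)\geq t_n\,\alpha_n(Q')+\log\bigl(\mathrm{e}^{L_{[f]}(Q''')}-1\bigr).
$$
Dividing by $t_n$, letting $n\to\infty$ (so $\alpha_n(Q')\to\alpha(Q')$ by Lemma~\ref{lem:Weak*ConvImpliesConvMasses}) and then $\varepsilon\to 0$ gives $\liminf\frac{1}{t_n}\LL\bigl(E^{t_n\alpha_n}[f]\bigr)(Q)\geq\alpha(Q)$.

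For the upper bound, fix $\varepsilon>0$ and pick $a'\in\,]d,a[$, $c'\in\,]b,c[$ generic for $\alpha$ with $\alpha(Q'_{a'})+\alpha(Q'_{c'})<\varepsilon$, where $Q'_{a'}=[a',a]\times[c,d]$ and $Q'_{c'}=[a,b]\times[c',c]$. Mimicking Step~2 with $\alpha$ replaced by $\alpha_n$, I obtain for every $n$ with $\alpha_n(Q)>0$
$$
\LL\bigl(E^{t_n\alpha_n}[f]\bigr)(Q)\leq\LL\bigl([f]\bigr)(Q''_{a'c'})+t_n\,\alpha_n(Q'_{a'})+t_n\,\alpha_n(Q'_{c'})+t_n\,\alpha_n(Q),
$$
where $Q''_{a'c'}=[a,c']\times[c,a']$ is \emph{fixed}. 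The remaining $n$, where $\alpha_n(Q)=0$, split into two subsequences: if $\alpha_n(Q^\perp)=0$ then Step~4 applied to $\alpha_n$ still yields the same bound, while if $\alpha_n(Q^\perp)>0$ then Step~3 gives $\LL\bigl(E^{t_n\alpha_n}[f]\bigr)(Q)\to 0$ directly. Dividing by $t_n$ and taking $\limsup$, the first term vanishes and weak* convergence gives $\limsup\frac{1}{t_n}\LL\bigl(E^{t_n\alpha_n}[f]\bigr)(Q)\leq\alpha(Q'_{a'})+\alpha(Q'_{c'})+\alpha(Q)<\alpha(Q)+\varepsilon$; let $\varepsilon\to 0$.

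The main obstacle is that the extremal-geodesic construction in Step~1 produces an auxiliary box $Q_n''$ that genuinely depends on $n$ through $\alpha_n$, so the $\alpha_n$-masses of these $n$-dependent sets need not converge to anything useful. The resolution is to fix the reference boxes $Q'$, $Q'''$, $Q'_{a'}$, $Q'_{c'}$, $Q''_{a'c'}$ in terms of $\alpha$ alone \emph{before} performing the $n$-dependent decomposition, and then exploit monotonicity inclusions like $Q'''\subset Q_n''$ and $\mathrm{Supp}(\alpha_n)\cap Q'\subset Q_n''$ so that the final estimates involve only the $\alpha_n$-masses of the fixed $\alpha$-generic boxes, to which Lemma~\ref{lem:Weak*ConvImpliesConvMasses} applies.
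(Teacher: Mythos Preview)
Your lower bound (Step~1) and your upper bound in the case $\alpha(Q)>0$ are correct and match the paper's argument: fixing the auxiliary boxes $Q'$, $Q'''$, $Q'_{a'}$, $Q'_{c'}$, $Q''_{a'c'}$ to be $\alpha$--generic and then invoking Lemma~\ref{lem:Weak*ConvImpliesConvMasses} is exactly the right idea.

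There is, however, a genuine gap in your treatment of the sub-case ``$\alpha_n(Q)=0$ and $\alpha_n(Q^\perp)>0$''. You write that ``Step~3 gives $\LL\bigl(E^{t_n\alpha_n}[f]\bigr)(Q)\to 0$ directly'', but Step~3 of Lemma~\ref{lem:EarthquakesAndBoxes1} is a limit statement as $t\to\infty$ for a \emph{fixed} lamination; it does not furnish a bound for a single pair $(t_n,\alpha_n)$. If you try to run the Step~3 argument along this subsequence, you need Step~1 applied to $Q^\perp$ to yield $\LL\bigl(E^{t_n\alpha_n}[f]\bigr)(Q^\perp)\to\infty$; but that lower bound only gives $\liminf\frac{1}{t_n}\LL\bigl(E^{t_n\alpha_n}[f]\bigr)(Q^\perp)\geq\alpha(Q^\perp)$, which is useless when $\alpha(Q^\perp)=0$. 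And that situation can certainly occur: take for instance $\alpha=0$ and $\alpha_n=\frac{1}{n}(\delta_g+\delta_{\bar g})$ for any geodesic $g$ in the interior of $Q^\perp$; then $\alpha_n(Q)=0$ and $\alpha_n(Q^\perp)>0$ for every $n$, and you have produced no bound whatsoever on $\frac{1}{t_n}\LL\bigl(E^{t_n\alpha_n}[f]\bigr)(Q)$ for this subsequence.

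The paper isolates exactly this obstruction in its Step~4 (the case $\alpha(Q)=\alpha(Q^\perp)=0$) and resolves it by a further decomposition $\alpha_n=\alpha_n^{Q^\perp}+\alpha_n'$, where $\alpha_n^{Q^\perp}$ is the restriction of $\alpha_n$ to $Q^\perp$ and $\alpha_n'$ has no support in the interior of $Q^\perp$. The Step~2 estimate then applies verbatim to $\alpha_n'$, while the contribution of $\alpha_n^{Q^\perp}$ is controlled by Lemma~\ref{lem:ElementaryEarthquake}(b) together with Lemma~\ref{lem:DiagonalElementaryEarthquake}, yielding the additional term $t_n\,\alpha_n(Q^\perp)$ on the right-hand side. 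After dividing by $t_n$ this extra term tends to $\alpha(Q^\perp)=0$ by Lemma~\ref{lem:Weak*ConvImpliesConvMasses}, since $Q^\perp$ is $\alpha$--generic. This single extra estimate is the missing ingredient in your proposal.
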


Note that the $\alpha_n$ are only required to converge to $\alpha$ for the weak* topology, not for the uniform weak* topology. As a consequence, $\alpha$ is clearly a measured geodesic lamination but is not necessarily bounded.

\begin{proof} This  follows from a careful inspection of the proof of Lemma~\ref{lem:EarthquakesAndBoxes1}. We repeat the steps of that proof. 

\medskip
\noindent\textsc{Step 1.}  \emph{ $\displaystyle \liminf_{n\to \infty} \frac1{t_n} \, \LL \big(E^{t_n \alpha_n}[f]\big) (Q) \geq \alpha(Q)$.}
\medskip

As in the proof of Lemma~\ref{lem:EarthquakesAndBoxes1}, assume $\alpha(Q)>0$ without loss of generality, and choose a smaller box $Q' = [a,b'] \times [c,d'] \subset Q$ with $a<b'<b$ and $c<d'<d$, and  with $\alpha(Q')>0$ close to $\alpha(Q)$. By countable additivity of $\alpha$ we can arrange that $Q'$ is $\alpha$--generic and in particular that $\alpha(\partial Q')=0$. 

For $n$ large enough, $\alpha_n(Q')>0$ by Lemma~\ref{lem:Weak*ConvImpliesConvMasses} and our hypothesis that $\alpha(\partial Q')=0$, and the support of $\alpha_n$ therefore meets $Q'$. Among the geodesics of the support of $\alpha_n$ that are contained in $Q'$, let $a_n''d_n''$ be the one that is closest to the interval $[d', a] \subset \partial_\infty \widetilde X_0$, and let $b_n''c_n''$ be the one closest to $[b', c]$, in such a way that $a \leq a_n'' \leq b_n'' \leq b'$ and $c \leq c_n'' \leq d_n'' \leq d'$.  Set $Q_n'' = [a_n'', b] \times [c_n'', d]$. 

The arguments used in Step~1 of  the proof of Lemma~\ref{lem:EarthquakesAndBoxes1} then show that, as in (\ref{eqn:EquakeLowerEstim4}), 
\begin{equation*}
 \LL\big (E^{t_n\alpha_n}[f] \big)(Q) \geq \LL\big (E^{t_n\alpha_n}[f] \big)(Q_n'') \geq  t_n\alpha_n(Q_n'') + \log \big(\E^{L_{[f]}(Q''')}-1 \big).
\end{equation*}
for the box $Q'''= [b', b] \times [d', d]$. 

By definition of the box $Q_n''$, its mass $\alpha_n(Q_n'')$ for the measured lamination $\alpha_n$ is equal to $\alpha_n(Q')$. Since we arranged that $\alpha(\partial Q')=0$, Lemma~\ref{lem:Weak*ConvImpliesConvMasses} then shows that $\alpha_n(Q_n'')= \alpha_n(Q')$ converges to $\alpha(Q')$ as $n$ tends to infinity. Therefore,
$$ \liminf_{n\to \infty} \frac1{t_n} \, \LL \big(E^{t_n \alpha_n}[f]\big) (Q) \geq \alpha(Q').$$

As $Q'$ can be chosen so that $\alpha(Q')$ is arbitrarily close to $\alpha(Q)$, we conclude that 
$$ \liminf_{n\to \infty} \frac1{t_n} \, \LL \big(E^{t_n \alpha_n}[f]\big) (Q) \geq \alpha(Q)$$
as required. 

\medskip
\noindent\textsc{Step 2.} \emph{If $\alpha(Q) > 0$, then $\displaystyle \limsup_{n\to  \infty} \frac1{t_n} \, \LL \big(E^{t_n \alpha_n}[f]\big) (Q) \leq \alpha(Q)$.}
\medskip

As in Step~2 of  the proof of Lemma~\ref{lem:EarthquakesAndBoxes1}, pick a point $c' \in \left]b,c \right[$ close to $c$, and a point $a'\in \left]d,a\right[$ close to $a$, such that $\alpha \big( [a,b] \times \{c'\}  \big)=0$ and $\alpha \big( \{a'\} \times [c,d] \big)=0$. Then,  the same argument as in that Step~2  shows that, for every $n$,
\begin{equation}
 \LL \big(E^{t_n\alpha_n}[f]\big)  (Q) \leq    \LL \big([f]\big)  (Q_{a'c'}'')  + t_n \alpha_n(Q_{a'}')  + t _n\alpha_n(Q_{c'}')  + t_n\alpha_n(Q) 
\end{equation}
for the boxes  $Q_{a'c'}'' = [a,c'] \times [c,a']$, $Q_{a'}'= [a', a] \times [c,d] $ and $Q_{c'}'= [a,b] \times [c', c] $. 

By choice of the points $a'$ and $c'$, $\alpha(\partial Q_{a'}')=\alpha(\partial Q'_{c'})=0$. We can therefore apply Lemma~\ref{lem:Weak*ConvImpliesConvMasses} when passing to the limit, and conclude that
$$\limsup_{n\to  \infty} \frac1{t_n} \, \LL \big(E^{t_n \alpha_n}[f]\big) (Q) \leq 
 \alpha(Q_{a'}')  + \alpha(Q_{c'}')  + \alpha(Q) .$$
 
 Choosing $a'$ and $c'$ so that $ \alpha(Q_{a'}') $ and $ \alpha(Q_{c'}') $ are arbitrarily small, we conclude that 
 $$\limsup_{n\to  \infty} \frac1{t_n} \, \LL \big(E^{t_n \alpha_n}[f]\big) (Q) \leq  \alpha(Q) .$$
 
 \medskip
\noindent\textsc{Step 3.} \emph{If $\alpha(Q)=0$ and  $\alpha(Q^\perp) > 0$, then $\displaystyle \lim_{n\to \infty} \LL \big(E^{t_n \alpha_n}[f]\big) (Q) =0$. }
\medskip

The argument  is identical to that used for Step~3 of the proof of Lemma~\ref{lem:EarthquakesAndBoxes1}.

\medskip
\noindent\textsc{Step 4.} \emph{If $\alpha(Q) =0$ and $\alpha(Q^\perp)=0$, then  $\displaystyle \lim_{n\to  \infty} \frac1{t_n} \LL \big(E^{t_n \alpha_n}[f]\big) (Q) =0$. }
\medskip

In the proof of Lemma~\ref{lem:EarthquakesAndBoxes1}, we used the fact that the support of $\alpha$ is disjoint from the interior of $Q^\perp$  to reduce this step to Step 2. However, although $\alpha(Q^\perp) =0$, it is here quite possible that $\alpha_n(Q^\perp)>0$ and  that the support of $\alpha_n$ meets the interior of $Q^\perp$. 

Let us decompose each $\alpha_n$ as a sum $\alpha_n = \alpha_n^{Q^\perp} + \alpha_n'$ of two measured geodesic laminations $\alpha_n^{Q^\perp} $ and $ \alpha_n'$ such that:
\begin{itemize}
 \item every geodesic of the support of  $\alpha_n^{Q^\perp} $ is contained in the orthogonal box $Q^\perp$, after a possible orientation-reversal;
 \item the support of $\alpha_n'$ is disjoint from the interior of $Q^\perp$. 
\end{itemize}

As in Step~2 of the proof of Lemma~\ref{lem:EarthquakesAndBoxes1}, pick a point $c' \in \left]b,c \right[$ close to $c$, and a point $a'\in \left]d,a\right[$ close to $a$, such that $\alpha \big( [a,b] \times \{c'\}  \big)=0$ and $\alpha \big( \{a'\} \times [c,d] \big)=0$. Because the support of $\alpha_n'$ is disjoint from the interior of $Q^\perp$, we can then apply to $\alpha_n'$ this Step~2 of  the proof of Lemma~\ref{lem:EarthquakesAndBoxes1} and show that, for every $n$,
\begin{equation}
\label{eqn:EquakeBoxUniform1}
 \LL \big(E^{t_n\alpha_n'}[f]\big)  (Q) \leq    \LL \big([f]\big)  (Q_{a'c'}'')  + t_n \alpha_n'(Q_{a'}')  + t_n \alpha_n'(Q_{c'}')  + t_n\alpha_n'(Q) 
\end{equation}
for the boxes  $Q_{a'c'}'' = [a,c'] \times [c,a']$, $Q_{a'}'= [a', a] \times [c,d] $ and $Q_{c'}'= [a,b] \times [c', c] $. Compare Equation~(\ref{eqn:EquakeUpperEst7}). 

Then, by  Lemma~\ref{lem:ElementaryEarthquake}(b) and  Lemma~\ref{lem:DiagonalElementaryEarthquake},
\begin{equation}
\label{eqn:EquakeBoxUniform2}
\begin{split}
 \LL \big(E^{t_n\alpha_n}[f]\big) (Q)&=
 \LL \big(E^{t_n\alpha_n^{Q^\perp}} E^{t_n\alpha_n'}  [f] \big)  (Q) 
  \\
 &\leq  \LL \big(E^{t_n\alpha_n^{Q^\perp}\kern -2pt(Q^\perp)}_{ac} E^{t_n\alpha_n'}  [f] \big)  (Q) 
  \\
 &\leq   \LL \big( E^{t_n\alpha_n'}  [f] \big)  (Q)
 + t_n\alpha_n^{Q^\perp}\kern -2pt(Q^\perp)
\end{split}
\end{equation}

Combining (\ref{eqn:EquakeBoxUniform1}) and (\ref{eqn:EquakeBoxUniform2}), we conclude that 
\begin{equation}
\begin{split}
 \LL \big(E^{t_n\alpha_n}[f]\big) (Q)
 &\leq \LL \big([f]\big)  (Q_{a'c'}'')   + t_n \alpha_n'(Q_{a'}')  + t_n \alpha_n'(Q_{c'}')  + t_n\alpha_n'(Q)   + t_n\alpha_n^{Q^\perp}\kern -2pt(Q^\perp)
  \\
&\leq \LL \big([f]\big)  (Q_{a'c'}'')   + t_n \alpha_n(Q_{a'}')  + t_n \alpha_n(Q_{c'}')  + t_n\alpha_n(Q)   + t_n\alpha_n(Q^\perp).
\end{split}
\end{equation}

Because the boxes $Q$, $Q^\perp$, $Q'_{a'}$, $Q'_{c'}$ are $\alpha$--generic, $\alpha_n(Q'_{a'})\to \alpha(Q'_{a'})$, $\alpha_n(Q'_{c'})\to \alpha(Q'_{c'})$, $\alpha_n(Q) \to \alpha(Q)=0$ and $\alpha_n(Q^\perp) \to \alpha(Q^\perp)=0$ as $n \to \infty$. It follows that
$$
\limsup_{n\to  \infty} \frac1{t_n} \LL \big(E^{t_n \alpha_n}[f]\big) (Q)  \leq  \alpha(Q'_{a'}) +  \alpha(Q'_{c'}).
$$

We can make $\alpha(Q'_{a'})$  arbitrarily close to $\alpha \big(  \{a\} \times [c,d]  \big)=0$ and  $\alpha(Q'_{c'})$  arbitrarily close to $\alpha \big( [a,b]\times  \{c\}  \big)=0$ by choosing $a'$ sufficiently close to $a$ and $c'$ sufficiently close to $c$. This proves that 
$$
\lim_{n\to  \infty} \frac1{t_n} \LL \big(E^{t_n \alpha_n}[f]\big) (Q)  =0.
$$

The combination of Steps 1, 2, 3 and 4 completes the proof of Lemma~\ref{lem:EarthquakesAndBoxes2}. 
\end{proof}

\subsection{Uniform weak* convergence of earthquake paths}

We are now ready to prove Theorem~\ref{thm:LimitEarthquakes}, which we restate here as:

\begin{thm}
 Let $\alpha \in \ML( X_0)$ be a bounded measured geodesic lamination and let $[f]\in \T(X_0)$ be a point of the Teichm\"uller space of $X_0$. Consider the left earthquake $E^{t\alpha} \colon \T(X_0) \to \T(X_0)$ for $t\in \R$, and the Liouville embedding $\LL \colon \T(X_0) \to \CC(X_0)$ from $\T(X_0)$ to  the space $\CC(X_0)$ of bounded geodesic currents. Then,
 $$
 \lim_{t\to \pm \infty} \frac 1{|t|}\LL \big( E^{t\alpha}[f] \big) = \alpha
 $$
 for the uniform weak* topology of $\CC(X_0)$. 
\end{thm}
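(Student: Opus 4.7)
The plan is to reduce uniform weak* convergence to pointwise convergence on boxes, exploit the equivariance of the earthquake construction under $\HHH(\widetilde X_0)$, and conclude via a compactness argument combined with a mild extension of Lemma~\ref{lem:EarthquakesAndBoxes2}.

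First, by the metrizability of the uniform weak* topology (Lemma~\ref{lem:GeodCurrentSpaceMetrizable}), it suffices to show that for every continuous test function $\xi \colon G(\widetilde X_0) \to \R$ with compact support and every sequence $t_n \to +\infty$,
$$
\sup_{\phi \in \HHH(\widetilde X_0)} \Bigl| \int \xi \circ \phi \, d\bigl( \tfrac{1}{t_n} \LL( E^{t_n \alpha} [f]) \bigr) - \int \xi\circ\phi \, d\alpha \Bigr| \to 0.
$$
Mimicking the step-function reduction from the proof of Proposition~\ref{prop:LiouvilleContinuous}, I cover the support of $\xi$ by finitely many boxes $Q_1, \dots, Q_m$ and approximate $\xi$ within $\epsilon_0$ by a step function supported on $\alpha$-generic subdivisions $Q_i'$. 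The approximation error is uniformly controlled in $\phi$ since $\alpha(\phi^{-1}Q_j)$ is uniformly bounded in $\phi$ (boundedness of $\alpha$) and $\tfrac{1}{t_n}\LL(E^{t_n\alpha}[f])(\phi^{-1} Q_j)$ is uniformly bounded in $n$ and $\phi$ (an easy consequence of the upper estimate of Step~2 in Lemma~\ref{lem:EarthquakesAndBoxes1}). The task thereby reduces to showing that for every $\alpha$-generic box $Q$,
$$
\sup_{\phi \in \HHH(\widetilde X_0)} \Bigl| \tfrac{1}{t_n} \LL(E^{t_n \alpha}[f])(\phi^{-1} Q) - \alpha(\phi^{-1} Q) \Bigr| \to 0.
$$

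The second step exploits the equivariance of earthquakes under $\HHH(\widetilde X_0)$. A direct check at the level of elementary earthquakes yields $E_{\phi(g)}^s(\widetilde h \circ \phi^{-1}) = E_g^s(\widetilde h) \circ \phi^{-1}$ for every quasisymmetric $\widetilde h$ and every $\phi \in \HHH(\widetilde X_0)$, and passing to limits of Dirac approximations gives $E^{t \phi_* \alpha}( \widetilde f \circ \phi^{-1}) = E^{t\alpha}(\widetilde f) \circ \phi^{-1}$. Pulling back the Liouville measure $L_{\widetilde X}$ of the target produces the identity
$$
\LL(E^{t\alpha}[f])(\phi^{-1} Q) = L_{E^{t \phi_* \alpha}( \widetilde f \circ \phi^{-1})}(Q),
$$
where the right-hand side is the $Q$-mass of the Liouville current of the earthquake of the quasisymmetric map $\widetilde f \circ \phi^{-1}$, regarded now as an arbitrary point of the universal Teichm\"uller space of $\widetilde X_0$ (the $\pi_1(X_0)$-equivariance of the basepoint is lost under conjugation by $\phi$, but this plays no role in computing masses of boxes). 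Combined with $\alpha(\phi^{-1}Q) = (\phi_* \alpha)(Q)$, the remaining task is to control $\tfrac{1}{t_n} L_{E^{t_n \alpha_n}(\widetilde h_n)}(Q) - \alpha_n(Q)$ uniformly as $\alpha_n = \phi_{n*}\alpha$ and $\widetilde h_n = \widetilde f \circ \phi_n^{-1}$ vary over $\phi_n \in \HHH(\widetilde X_0)$.

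Finally, I would argue by contradiction. If uniform convergence fails on $Q$, then there exist $\epsilon > 0$ and $\phi_n \in \HHH(\widetilde X_0)$ realizing this. Boundedness of $\alpha$ makes $\{\alpha_n\}$ uniformly bounded on compact subsets of $G(\widetilde X_0)$, so Banach-Alaoglu furnishes a subsequence with $\alpha_n \to \beta$ weak*, the limit $\beta$ again a measured geodesic lamination; perturbing $Q$ slightly to a nearby box that is both $\alpha$- and $\beta$-generic preserves the failure at the cost of reducing $\epsilon$. The maps $\widetilde h_n$ are uniformly quasisymmetric since post-composition with the M\"obius transformation $\phi_n^{-1}$ does not affect the quasisymmetric constant, and by Proposition~\ref{prop:QuasiSymmetricEta} applied to both $\widetilde f$ and $\widetilde f^{-1}$ the Liouville masses $L_{\widetilde h_n}(B) = L_{[f]}(\phi_n^{-1}B)$ on any fixed box $B$ lie in a compact subinterval of $\left]0,\infty\right[$ independent of $n$. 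The main obstacle is that Lemma~\ref{lem:EarthquakesAndBoxes2} is stated for a fixed basepoint; however, inspection of its proof reveals that $[f]$ enters only through Liouville masses of fixed boxes (the term $\log(\E^{L_{[f]}(Q''')}-1)$ in the lower-bound Step~1, and the term $\LL([f])(Q_{a'c'}'')$ in the upper-bound Step~2), each an $O(1)$ quantity that disappears after division by $t_n$. This supplies the required extension of Lemma~\ref{lem:EarthquakesAndBoxes2} to varying basepoints $[\widetilde h_n]$ with uniformly controlled Liouville masses on fixed boxes, yielding $\tfrac{1}{t_n} L_{E^{t_n \alpha_n}(\widetilde h_n)}(Q) \to \beta(Q)$; simultaneously $\alpha_n(Q) \to \beta(Q)$ by Lemma~\ref{lem:Weak*ConvImpliesConvMasses}. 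The difference therefore tends to $0$, contradicting the assumption. The case $t \to -\infty$ is handled by the same argument applied to right earthquakes.
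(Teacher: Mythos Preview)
Your argument follows essentially the same route as the paper: argue by contradiction, push the $\phi_n$ onto the lamination via equivariance to obtain $\alpha_n=(\phi_n)_*\alpha$, extract a weak* limit $\beta$ by Banach--Alaoglu, and invoke Lemma~\ref{lem:EarthquakesAndBoxes2} on $\beta$-generic boxes. Two differences are worth noting. First, you reduce to boxes \emph{before} extracting $\beta$, so your fixed box $Q$ need not be $\beta$-generic; the perturbation claim (``perturbing $Q$ slightly\ldots preserves the failure at the cost of reducing $\epsilon$'') is too quick, since a single nearby $\beta$-generic $Q'$ need not inherit the gap when the $\alpha_n$ concentrate mass near $\partial Q$. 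This is easily repaired either by sandwiching $Q$ between $\beta$-generic boxes $Q_-\subset Q\subset Q_+$ with $\beta(Q_+\setminus Q_-)$ small, or---cleaner, and what the paper does---by postponing the step-function approximation of $\xi$ until after $\beta$ is in hand and choosing the boxes $\beta$-generic from the outset. Second, your equivariance correctly tracks the varying basepoint $\widetilde h_n=\widetilde f\circ\phi_n^{-1}$ and extends Lemma~\ref{lem:EarthquakesAndBoxes2} accordingly; this is in fact more careful than the paper's own presentation, which asserts that the basepoint stays fixed. Your observation that the $[f]$-dependent terms in the proof of that lemma enter only through Liouville masses of fixed boxes, hence are $O(1)$ and vanish after division by $t_n$, is exactly what makes the extension go through.
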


\begin{proof} By symmetry between left and right earthquakes, we can restrict attention to the limit as $t \to + \infty$. 

	It is easier to use a proof by contradiction. Suppose the property false. Then, because the uniform weak* topology is metrizable (Lemma~\ref{lem:GeodCurrentSpaceMetrizable}), there exists a sequence of real numbers $t_n$ such that $t_n \to +\infty$ as $n\to \infty$ but such that $ \frac 1{t_n}\LL \big( E^{t_n\alpha}[f] \big)=\frac 1{t_n} L_{E^{t_n\alpha}[f]} $ does not converge to $\alpha$ for the uniform weak* topology. Passing to a subsequence if necessary, this means that there exists a lower bound $\epsilon>0$,  a test function $\xi \colon G(\widetilde X_0) \to \R$ with compact support and a sequence of biholomorphic maps $\phi_n \in \HHH(\widetilde X_0)$ such that 
\begin{equation}
\label{eqn:ConvergenceEarthquakes1}
\bigg| \frac1{t_n} \int_{G(\widetilde X_0)}  \xi \circ \phi_n \, dL_{E^{t_n\alpha}[f]} - \int_{G(\widetilde X_0)}  \xi \circ \phi_n \, d\alpha \bigg| > \epsilon
\end{equation}
 for every $n$. 
 
 Let $\alpha_n$ be the push forward of the measure $\alpha$ under the homeomorphism $G(\widetilde X_0) \to G(\widetilde X_0)$ induced by $\phi_n$. Then $\alpha_n$ is clearly a  measured geodesic lamination, and is bounded by definition of this property. Also, by definition of the push forward,
 $$
 \int_{G(\widetilde X_0)}  \xi \circ \phi_n \, d\alpha = \int_{G(\widetilde X_0)}  \xi  \, d\alpha_n.
 $$
 
Lift the quasiconformal diffeomorphism $f\colon X_0 \to X$ representing $[f]\in \T(X_0)$ to $\widetilde f\colon \widetilde X_0 \to \widetilde X$. Then, in the Teichm\"uller space $\T(\widetilde X_0)$ of the universal cover, diagram chasing in the construction of elementary earthquakes shows that $E^t_g [\widetilde f \circ \phi_n]= E^t_{\phi_n(g)}[\widetilde f]$ for every geodesic $g\in G(\widetilde X_0)$ and every $t\in \R$. It follows that $E^{t_n \alpha} [\widetilde f \circ \phi_n]= E^{t_n\alpha_n}[\widetilde f]$. As a consequence, the Liouville current $L_{E^{t_n\alpha_n}[ f]}=L_{E^{t_n\alpha_n}[\widetilde f]}$ is the push forward of $L_{E^{t_n\alpha}[ f]}=L_{E^{t_n\alpha}[\widetilde f]}$ under the homeomorphism $\phi_n \colon G(\widetilde X_0) \to G(\widetilde X_0)$ induced by $\phi_n\in \HHH(\widetilde X_0)$. In particular,
 $$
 \int_{G(\widetilde X_0)}  \xi \circ \phi_n \, dL_{E^{t_n\alpha}[f]}
 =
 \int_{G(\widetilde X_0)}  \xi  \, dL_{E^{t_n\alpha_n}[f]}
 $$
 and we can rewrite (\ref{eqn:ConvergenceEarthquakes1}) as
\begin{equation}
\label{eqn:ConvergenceEarthquakes2}
\bigg| \frac1{t_n} \int_{G(\widetilde X_0)}  \xi \, dL_{E^{t_n\alpha_n}f} - \int_{G(\widetilde X_0)}  \xi  \, d\alpha_n \bigg| > \epsilon.
\end{equation}

 For every continuous function $\xi' \colon G(\widetilde X_0) \to \R$ with compact support, the associated weak* seminorms
 $$
\vert \alpha_n \vert_{\xi'} = \Bigl\vert \int_{G(\widetilde X_0)} \xi' \, d\alpha_n \Bigr\vert =  \Bigl\vert \int_{G(\widetilde X_0)} \xi' \circ \phi_n \, d\alpha \Bigr\vert
$$
are uniformly bounded because the measured geodesic lamination  $\alpha \in \ML(X_0)$ is bounded. By weak* compactness (see for instance \cite[chap. III, \S 1, n${}^{\mathrm o}$9]{Bou}) we can therefore assume, after passing to a subsequence, that $\alpha_n$ converges to some measured geodesic lamination $\beta$ for the weak* topology (but not necessarily for the uniform weak* topology). 

Lemma~\ref{lem:EarthquakesAndBoxes2} then states that for every $\beta$--generic box $Q$
$$
\lim_{n \to \infty} \frac1{t_n} L_{E^{t_n \alpha_n}f} (Q) = \lim_{n \to \infty} \frac1{t_n} \LL\big( E^{t_n \alpha_n}[f] \big) (Q) =\beta(Q).
$$
But this will contradict (\ref{eqn:ConvergenceEarthquakes2}) if we approximate the test function $\xi$ by a $\beta$--generic step function, namely by a linear combination of the characteristic functions of a finite family of $\beta$--generic boxes. 

Therefore, our original assumption cannot hold, and $ \frac 1{|t|}\LL \big( E^{t\alpha}[f] \big) $ converges to $ \alpha$  for the uniform weak* topology as $t\to +\infty$. 
\end{proof}

\section{Naturality under quasiconformal diffeomorphisms}

We conclude with a remark that our constructions are natural with respect to quasiconformal diffeomorphisms. 

Let $f \colon X_1 \to X_2$ be a quasiconformal diffeomorphism between two conformally hyperbolic Riemann surfaces. If we lift $f$ to a quasiconformal diffeomorphism $\widetilde f \colon \widetilde X_1 \to \widetilde X_2$ between universal covers, the quasisymmetric extension  $\widetilde f \colon \partial_\infty \widetilde X_1 \to  \partial_\infty  \widetilde X_2$ induces a homeomorphism $\widetilde f \colon G(\widetilde X_1) \to G(\widetilde X_2)$ and therefore a bijection $F \colon \mathcal C(X_1) \to \mathcal C(X_2)$ between the corresponding spaces of geodesic currents. 

\begin{lem}
\label{lem:QuasiConfRespectsUniformWeak*}
 The above bijection restricts to a homeomorphism $F \colon \CC(X_1) \to \CC(X_2)$, when the spaces $\CC(X_1)$ and $\CC(X_2)$ of bounded geodesic currents are endowed with the uniform weak* topology. 
 \end{lem}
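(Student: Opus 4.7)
The plan has three parts. First, I will show that $F$ sends $\CC(X_1)$ into $\CC(X_2)$; then that $F$ is continuous; and finally, applying the same arguments to the quasiconformal map $f^{-1}\colon X_2 \to X_1$, I will conclude that $F^{-1}$ is continuous, so that $F$ is a homeomorphism. Two geometric observations underlie the argument. First, since $\widetilde f^{-1}\colon \partial_\infty \widetilde X_2 \to \partial_\infty \widetilde X_1$ is an orientation-preserving homeomorphism, it maps boxes in $G(\widetilde X_2)$ to boxes in $G(\widetilde X_1)$, and Proposition~\ref{prop:QuasiSymmetricEta} applied to $\widetilde f^{-1}$ bounds the Liouville mass of the image in terms of the original mass and $M(f^{-1})$. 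Second, by triple transitivity of $\HHH(\widetilde X_1)$ on $\partial_\infty \widetilde X_1$, every box in $G(\widetilde X_1)$ of Liouville mass bounded by a constant $C$ can be normalized by an element of $\HHH(\widetilde X_1)$ so that its image lies in a common compact subset $K_0 \subset G(\widetilde X_1)$.

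For well-definedness, let $\eta\colon G(\widetilde X_2) \to \R$ be continuous with compact support in a box $Q_0\subset G(\widetilde X_2)$ with corners $s^{(1)},s^{(2)},s^{(3)},s^{(4)} \in \partial_\infty \widetilde X_2$. For $\psi\in \HHH(\widetilde X_2)$, set $R_\psi = \widetilde f^{-1}(\psi^{-1}(Q_0))$; then $L_{\widetilde X_1}(R_\psi)\leq \omega(L_{\widetilde X_2}(Q_0))=:C$ by Proposition~\ref{prop:QuasiSymmetricEta}. Fix three points $p_1,p_2,p_3\in\partial_\infty \widetilde X_1$ and let $\phi_\psi\in\HHH(\widetilde X_1)$ be the unique biholomorphism sending $p_i$ to the $i$th corner of $R_\psi$. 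The normalized box $R_\psi^{\mathrm{std}}=\phi_\psi^{-1}(R_\psi)$ has its fourth corner in a compact subarc of $\partial_\infty \widetilde X_1 - \{p_1,p_2,p_3\}$ determined by $C$, so all $R_\psi^{\mathrm{std}}$ lie in a common compact $K_0 \subset G(\widetilde X_1)$. Picking a continuous $\zeta_0\geq \chi_{K_0}$ with compact support then yields
$$\Bigl| \int \eta\circ \psi \,dF(\alpha)\Bigr| = \Bigl|\int \eta\circ\psi\circ\widetilde f\, d\alpha \Bigr| \leq \Vert\eta\Vert_\infty\,\alpha(R_\psi) \leq \Vert\eta\Vert_\infty\, \Vert\alpha\Vert_{\zeta_0}$$
uniformly in $\psi$, showing that $F(\alpha)\in\CC(X_2)$.

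For continuity, I will use metrizability (Lemma~\ref{lem:GeodCurrentSpaceMetrizable}) and argue by contradiction: if $\alpha_n\to\alpha$ in uniform weak$^*$ but $F(\alpha_n)\not\to F(\alpha)$, pick $\epsilon>0$, a continuous $\eta$ with compact support in a box $Q_0$, and $\psi_n\in\HHH(\widetilde X_2)$ with $\bigl|\int \eta\circ\psi_n\circ \widetilde f\, d(\alpha_n-\alpha)\bigr|>\epsilon$. Set $\phi_n=\phi_{\psi_n}$ as above and $\tilde\eta_n=\eta\circ\psi_n\circ\widetilde f\circ \phi_n$, which is supported in $R_{\psi_n}^{\mathrm{std}}\subset K_0$ and bounded by $\Vert\eta\Vert_\infty$. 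Crucially, $\psi_n\circ\widetilde f\circ\phi_n\colon\partial_\infty\widetilde X_1\to\partial_\infty\widetilde X_2$ sends $p_i$ to the fixed point $s^{(i)}$ for $i=1,2,3$ and is quasisymmetric with constant $M(f)$; the classical normal-family property of uniformly quasisymmetric homeomorphisms with three normalized points (see e.g.\ \cite[\S II.5]{LV}) then guarantees $\{\tilde\eta_n\}$ is equicontinuous, and Arzel\`a--Ascoli extracts a subsequence $\tilde\eta_{n_k}\to\tilde\eta_\infty$ converging uniformly on $K_0$. Splitting
$$\int \eta\circ\psi_{n_k}\circ\widetilde f\, d(\alpha_{n_k}-\alpha)= \int\tilde\eta_\infty\circ \phi_{n_k}^{-1}\, d(\alpha_{n_k}-\alpha)+\int(\tilde\eta_{n_k}-\tilde\eta_\infty)\circ\phi_{n_k}^{-1}\, d(\alpha_{n_k}-\alpha),$$
the first term is bounded by $\Vert\alpha_{n_k}-\alpha\Vert_{\tilde\eta_\infty}\to 0$ by uniform weak$^*$ convergence, and the second by $\Vert\tilde\eta_{n_k}-\tilde\eta_\infty\Vert_\infty(\Vert\alpha_{n_k}\Vert_{\zeta_0}+\Vert\alpha\Vert_{\zeta_0})\to 0$ (uniform boundedness of $\Vert\alpha_n\Vert_{\zeta_0}$ follows from uniform weak$^*$ convergence). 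This contradicts the lower bound $\epsilon$, proving that $F$ is continuous. The main obstacle in the whole argument is precisely this equicontinuity step, which is what allows the uniform weak$^*$ hypothesis on the $\alpha_n$ (encoded via test functions precomposed with isometries of $\widetilde X_1$) to control integrands of the form $\eta\circ\psi\circ\widetilde f$ involving the more flexible quasisymmetric map $\widetilde f$. Applying the same arguments to $f^{-1}$ then yields the continuity of $F^{-1}$.
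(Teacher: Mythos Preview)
Your proof is correct and follows essentially the same strategy as the paper's: normalize via triple transitivity so that the maps $\psi_n\circ\widetilde f\circ\phi_n$ (your notation) all fix three points of $\partial_\infty\widetilde X_1$, invoke the normal-family property of uniformly quasisymmetric homeomorphisms with three normalized points to extract a uniformly convergent subsequence, and then split the integral by a triangle inequality into a piece controlled by the uniform weak$^*$ seminorm $\Vert\alpha_{n_k}-\alpha\Vert_{\tilde\eta_\infty}$ and a piece controlled by the uniform smallness of $\tilde\eta_{n_k}-\tilde\eta_\infty$ times a bounded seminorm. The paper does exactly this, defining an explicit bijection $\rho\colon\HHH(\widetilde X_2)\to\HHH(\widetilde X_1)$ using fixed base points $x_i,y_i,z_i$ rather than your box corners, and postponing well-definedness until after continuity; but these are organizational rather than mathematical differences.

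One small point: you assume throughout that the test function $\eta$ is supported in a single box $Q_0$. Not every compact subset of $G(\widetilde X_2)$ lies in a single box (think of the set of antipodal pairs in $G(\D)$), so you should add a sentence reducing to this case by covering the support of a general $\eta$ by finitely many boxes and using a partition of unity. This is routine and does not affect the argument.
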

\begin{proof} The main issue to deal with is that the definition of bounded geodesic currents in $X_1$ and of the uniform weak* topology of $\CC(X_1)$ involves the space $\HHH(\widetilde X_1)$ of biholomorphic maps of the universal cover $\widetilde X_1$, whereas the corresponding notions in $X_2$ involve $\HHH(\widetilde X_2)$. Our proof will use an \textit{ad hoc} correspondence between  $\HHH(\widetilde X_1)$ and $\HHH(\widetilde X_2)$. 

Arbitrarily pick three distinct points $x_1$, $y_1$, $z_1 \in \partial_\infty \widetilde X_1$, counterclockwise in this order,  in the circle at infinity of $\widetilde X_1$  and three distinct points  $x_2$, $y_2$, $z_2  \in \partial_\infty \widetilde X_2$, also in counterclockwise order. Then, for every biholomorphic map $\phi \in \HHH(\widetilde X_2)$, there exists a unique $\rho(\phi) \in \HHH(\widetilde X_1)$ sending the three points $\widetilde f^{-1} \circ \phi^{-1}(x_2)$, $\widetilde f^{-1} \circ \phi^{-1}(y_2)$, $\widetilde f^{-1} \circ \phi^{-1}(z_2)$ to $x_1$, $y_1$, $z_1$, respectively. This provides a bijection $\rho \colon \HHH(X_2) \to \HHH(X_1)$ characterized by the property that for every $\phi \in \HHH(\widetilde X_2)$ the map $\phi \circ \widetilde f \circ \rho(\phi)^{-1}$ sends our base points $x_1$, $y_1$, $z_1 \in \partial_\infty \widetilde X_1$ to the base points  $x_2$, $y_2$, $z_2  \in \partial_\infty \widetilde X_2$, respectively. 

We temporarily postpone the proof that $F$ sends $\CC(X_1)$ to $\CC(X_2)$, as the argument will be a simpler version of our proof that the restriction $F \colon \CC(X_1) \to \CC(X_2)$ is continuous.

To prove that $F \colon \CC(X_1) \to \CC(X_2)$ is continuous, consider a sequence of bounded geodesic currents $\alpha_n \in \CC(X_1)$ converging to $\alpha_\infty$ as $n\to \infty$, for the uniform weak* topology. We want to show that $F(\alpha_n)$ converges to $F(\alpha_\infty)$ in $\CC(X_2)$, namely that
\begin{equation}
\label{eqn:QuasiConfRespectsUniformWeak*0}
\Vert F(\alpha_n) -F(\alpha_\infty) \Vert_{\xi}
 = \sup_{\phi \in \HHH(\widetilde X_2)}\ \Bigl\vert \int_{G(\widetilde X_2)} \xi\circ \phi    \, d F(\alpha_n) 
 - \int_{G(\widetilde X_2)} \xi\circ \phi    \, d F(\alpha_\infty)  \Bigr\vert \to 0 \text{ as } n\to \infty
\end{equation}
for every continuous function $\xi \colon G(\widetilde X_2) \to \R$ with compact support. It is easier to use a proof by contradiction. 

Suppose that (\ref{eqn:QuasiConfRespectsUniformWeak*0}) does not hold, in search for a contradiction. Then, passing to a subsequence if necessary, there exists $\delta>0$ and  a sequence of biholomorphic maps $\phi_n \in \HHH(\widetilde X_2)$ such that 
\begin{equation}
\label{eqn:QuasiConfRespectsUniformWeak*1}
 \Bigl\vert \int_{G(\widetilde X_2)} \xi\circ \phi_n    \, d F(\alpha_n) 
 - \int_{G(\widetilde X_2)} \xi\circ \phi_n    \, d F(\alpha_\infty)  \Bigr\vert > \delta
\end{equation}
for every $n$. 
Then, by definition of the measure $F(\alpha_n)$, 
\begin{equation}
\label{eqn:QuasiConfRespectsUniformWeak*2}
\begin{split}
 \int_{G(\widetilde X_2)} \xi\circ \phi_n    \, d F(\alpha_n) 
&=   \int_{G(\widetilde X_1)} \xi\circ \phi_n \circ \widetilde f \, d \alpha_n 
\\
&=   \int_{G(\widetilde X_1)} \xi\circ \big(\phi_n \circ \widetilde f \circ \rho(\phi_n)^{-1} \big) \circ \rho(\phi_n) \, d \alpha_n  
\end{split}
\end{equation}
for the bijection $\rho \colon \HHH(\widetilde X_1) \to \HHH(\widetilde X_2)$ defined above. Similarly, 
\begin{equation}
\label{eqn:QuasiConfRespectsUniformWeak*3}
 \int_{G(\widetilde X_2)} \xi\circ \phi_n    \, d F(\alpha_\infty) 
=   \int_{G(\widetilde X_1)} \xi\circ \big(\phi_n \circ \widetilde f \circ \rho(\phi_n)^{-1} \big) \circ \rho(\phi_n) \, d \alpha_\infty
\end{equation}

The functions $\widetilde f_n = \phi_n \circ \widetilde f \circ \rho(\phi_n)^{-1}  \colon \partial_\infty \widetilde X_1 \to  \partial_\infty \widetilde X_2$ are uniformly quasisymmetric since $M(\widetilde f_n) = M(\widetilde f)$, and by construction send  $x_1$, $y_1$, $z_1 \in \partial_\infty \widetilde X_1$ to  $x_2$, $y_2$, $z_2  \in \partial_\infty \widetilde X_2$, respectively. By a classical equicontinuity property (see \cite[\S II.5]{LV}), they consequently form a relatively compact family in the space of quasisymmetric homeomorphisms $ \partial_\infty \widetilde X_1 \to  \partial_\infty \widetilde X_2$, for the topology of uniform convergence. Passing to a subsequence if necessary, we can therefore assume that the $\widetilde f_n  \colon \partial_\infty \widetilde X_1 \to  \partial_\infty \widetilde X_2$ uniformly converge to some homeomorphism $\widetilde f_\infty$. Then, as $n\to \infty$,  the induced homeomorphisms $\widetilde f_n \colon G(\widetilde X_1) \to G(\widetilde X_2)$ converge to $\widetilde f_\infty \colon G(\widetilde X_1) \to G(\widetilde X_2)$ uniformly on compact subsets of $G(\widetilde X_1)$. 

By Equations (\ref{eqn:QuasiConfRespectsUniformWeak*2}) and (\ref{eqn:QuasiConfRespectsUniformWeak*3})
\begin{equation}
\label{eqn:QuasiConfRespectsUniformWeak*4}
\begin{split}  \Bigl\vert \int_{G(\widetilde X_2)} \xi\circ \phi_n    \, d F(\alpha_n) 
 &- \int_{G(\widetilde X_2)} \xi\circ \phi_n    \, d F(\alpha_\infty)  \Bigr\vert 
 \\
 &=  \Bigl\vert \int_{G(\widetilde X_1)} \xi\circ \widetilde f_n \circ \rho(\phi_n) \, d \alpha_n  
 - \int_{G(\widetilde X_1)} \xi\circ \widetilde f_n \circ \rho(\phi_n) \, d \alpha_\infty   \Bigr\vert 
 \\
 &\leq  \Bigl\vert \int_{G(\widetilde X_1)} \xi\circ \widetilde f_n \circ \rho(\phi_n) \, d \alpha_n  
 - \int_{G(\widetilde X_1)} \xi\circ \widetilde f_\infty \circ \rho(\phi_n) \, d \alpha_n   \Bigr\vert 
 \\
 &\qquad +  \Bigl\vert \int_{G(\widetilde X_1)} \xi\circ \widetilde f_\infty \circ \rho(\phi_n) \, d \alpha_n  
 - \int_{G(\widetilde X_1)} \xi\circ \widetilde f_\infty \circ \rho(\phi_n) \, d \alpha_\infty   \Bigr\vert 
  \\
 &\qquad +  \Bigl\vert \int_{G(\widetilde X_1)} \xi\circ \widetilde f_\infty \circ \rho(\phi_n) \, d \alpha_\infty 
 - \int_{G(\widetilde X_1)} \xi\circ \widetilde f_n \circ \rho(\phi_n) \, d \alpha_\infty   \Bigr\vert .
\end{split}
\end{equation}

Choose a nonnegative continuous function $\eta\colon G(\widetilde X_2) \to \R$ with compact support  that is constantly 1 on a neighborhood of the support of $\xi$. For an arbitrary $\epsilon>0$, the fact that $\widetilde f_n$ converges to $\widetilde f_\infty$ uniformly on compact subsets implies that
$$
| \xi \circ \widetilde f_n - \xi \circ \widetilde f_\infty | \leq \epsilon  \eta\circ \widetilde f_\infty
$$
for $n$ large enough, so that 
\begin{equation}
\label{eqn:QuasiConfRespectsUniformWeak*5}
\begin{split}
  \Bigl\vert \int_{G(\widetilde X_1)} \xi\circ \widetilde f_n \circ \rho(\phi_n) \, d \alpha_n  
 - \int_{G(\widetilde X_1)} \xi\circ \widetilde f_\infty \circ \rho(\phi_n) \, d \alpha_n   \Bigr\vert 
 &\leq \epsilon \int_{G(\widetilde X_1)} \eta\circ \widetilde f_\infty  \circ \rho(\phi_n) \, d \alpha_n 
 \\
 &\leq \epsilon \sup_{\phi \in \HHH(\widetilde X_1)}  \int_{G(\widetilde X_1)} \eta\circ \widetilde f_\infty  \circ \phi \, d \alpha_n 
 \\
& \leq \epsilon \Vert \alpha_n \Vert_{ \eta\circ \widetilde f_\infty} 
\end{split}
\end{equation}
for $n$ large enough, where $\Vert\  \Vert_{ \eta\circ \widetilde f_\infty}$ is the (uniform weak*) seminorm on $\CC(X_1)$ defined by the function $\eta\circ \widetilde f_\infty \colon G( \widetilde X_1) \to \R$. 
Similarly
\begin{equation}
\label{eqn:QuasiConfRespectsUniformWeak*6}
  \Bigl\vert \int_{G(\widetilde X_1)} \xi\circ \widetilde f_\infty \circ \rho(\phi_n) \, d \alpha_\infty 
 - \int_{G(\widetilde X_1)} \xi\circ \widetilde f_n \circ \rho(\phi_n) \, d \alpha_\infty   \Bigr\vert 
 \leq \epsilon \Vert \alpha_\infty \Vert_{ \eta\circ \widetilde f_\infty}
\end{equation}
for $n$ large enough. 
Finally,
\begin{equation}
\label{eqn:QuasiConfRespectsUniformWeak*7}
\Bigl\vert \int_{G(\widetilde X_1)} \xi\circ \widetilde f_\infty \circ \rho(\phi_n) \, d \alpha_n  
 - \int_{G(\widetilde X_1)} \xi\circ \widetilde f_\infty \circ \rho(\phi_n) \, d \alpha_\infty   \Bigr\vert 
 \leq  \Vert \alpha_n-  \alpha_\infty \Vert_{ \xi\circ \widetilde f_\infty}. 
\end{equation}

Combining the inequalities of (\ref{eqn:QuasiConfRespectsUniformWeak*4}--\ref{eqn:QuasiConfRespectsUniformWeak*7}) we conclude that, for every $\epsilon>0$, 
\begin{equation}
\label{eqn:QuasiConfRespectsUniformWeak*8}
  \Bigl\vert \int_{G(\widetilde X_2)} \xi\circ \phi_n    \, d F(\alpha_n) 
 - \int_{G(\widetilde X_2)} \xi\circ \phi_n    \, d F(\alpha_\infty)  \Bigr\vert 
 \leq \epsilon \Vert \alpha_n \Vert_{ \eta\circ \widetilde f_\infty} 
 +\epsilon \Vert \alpha_\infty \Vert_{ \eta\circ \widetilde f_\infty}
 + \Vert \alpha_n-  \alpha_\infty \Vert_{ \xi\circ \widetilde f_\infty}.
\end{equation}
for $n$ large enough. 

However, $\Vert \alpha_n \Vert_{ \eta\circ \widetilde f_\infty} \to \Vert \alpha_\infty \Vert_{ \eta\circ \widetilde f_\infty} $ and $ \Vert \alpha_n-  \alpha_\infty \Vert_{ \xi\circ \widetilde f_\infty} \to 0$ as $n\to \infty$ since $\alpha_n \to \alpha_\infty$ in $\CC(X_1)$, so that (\ref{eqn:QuasiConfRespectsUniformWeak*8}) contradicts (\ref{eqn:QuasiConfRespectsUniformWeak*1}) for $\epsilon$ small enough. 

This contradiction proves (\ref{eqn:QuasiConfRespectsUniformWeak*0}), and shows that the function $F \colon \CC(X_1) \to \CC(X_2)$ is continuous. 

A symmetric argument shows that the inverse $F^{-1} \colon \CC(X_2) \to \CC(X_1)$  is continuous, so that $F \colon \CC(X_1) \to \CC(X_2)$ is a homeomorphism.

We had postponed the proof that our original function $F \colon \mathcal C(X_1) \to \mathcal C(X_2)$ sends bounded geodesic current to bounded geodesic current. This is a simpler version of the above continuity proof. For a bounded geodesic current $\alpha \in \CC(X_2)$, suppose in search of a contradiction that the geodesic current $F(\alpha) \in \mathcal C(X_2)$ is not bounded. As in (\ref{eqn:QuasiConfRespectsUniformWeak*0}) and (\ref{eqn:QuasiConfRespectsUniformWeak*1}), this means that there exists a continuous function $\xi \colon G(\widetilde X_2) \to \R$ with compact support and  a sequence of biholomorphic maps $\phi_n \in \HHH(\widetilde X_2)$ such that 
\begin{equation}
\label{eqn:QuasiConfRespectsUniformWeak*9}
 \Bigl\vert \int_{G(\widetilde X_2)} \xi\circ \phi_n    \, d F(\alpha) 
 \Bigr\vert \to \infty \text{ as } n \to \infty. 
\end{equation}

Passing to a subsequence if necessary, we can again arrange that the functions $\widetilde f_n = \phi_n \circ \widetilde f \circ \rho(\phi_n)^{-1} \colon G(\widetilde X_1) \to G(\widetilde X_2)$ converge to some homeomorphism $\widetilde f_\infty$, uniformly on compact subsets of $G(\widetilde X_1)$. Then, given $\epsilon>0$ and a continuous function $\eta\colon G(\widetilde X_2) \to \R$ with compact support  that is constantly 1 on a neighborhood of the support of $\xi$,
\begin{equation}
\begin{split}
  \Bigl\vert \int_{G(\widetilde X_2)} \xi\circ \phi_n    \, d F(\alpha) 
 \Bigr\vert
 &=  \Bigl\vert \int_{G(\widetilde X_1)} \xi\circ \widetilde f_n \circ \rho(\phi_n)    \, d\alpha
 \Bigr\vert
 \\
 &\leq  \Bigl\vert \int_{G(\widetilde X_1)} \xi\circ \widetilde f_n \circ \rho(\phi_n)    \, d\alpha
- \int_{G(\widetilde X_1)} \xi\circ \widetilde f_\infty \circ \rho(\phi_n)    \, d\alpha
 \Bigr\vert
 \\
&\qquad\qquad \qquad\qquad +
  \Bigl\vert \int_{G(\widetilde X_1)} \xi\circ \widetilde f_\infty \circ \rho(\phi_n)    \, d\alpha
 \Bigr\vert
 \\
 & \leq \epsilon \Vert \alpha \Vert_{ \eta\circ \widetilde f_\infty}
 + \Vert \alpha \Vert_{ \xi \circ \widetilde f_\infty}
\end{split}
\end{equation}
for $n$ large enough, as in (\ref{eqn:QuasiConfRespectsUniformWeak*4}--\ref{eqn:QuasiConfRespectsUniformWeak*8}). But this clearly contradicts (\ref{eqn:QuasiConfRespectsUniformWeak*9}), and therefore concludes our proof that the geodesic current $F(\alpha)$ is bounded. 

As a consequence, the bijection $F\colon \mathcal C(X_1) \to \mathcal C(X_2)$ restricts to a map $F\colon \CC(X_1) \to \CC(X_2)$, which we already proved is a homeomorphism for the uniform weak* topologies. 
\end{proof}

The quasiconformal diffeomorphism $f \colon X_1 \to X_2$ also induces a map $F_{\mathrm T} \colon \T(X_1) \to \T(X_2)$ between Teichm\"uller spaces, by the property that $F_{\mathrm T} \big( [g] \big) = [g \circ f^{-1}]\in \T(X_2)$ for every $[g] \in \T(X_1)$ represented by a quasiconformal diffeomorphism $g \colon X_1 \to X$. It is immediate from definitions that $F_{\mathrm T} $ is an isometry for the Teichm\"uller metrics of $\T(X_1)$ and $\T(X_2)$.

It is also immediate from definitions that this construction is well-behaved with respect to the Liouville embeddings $\LL_1 \colon \T(X_1) \to \CC(X_1)$ and  $\LL_2 \colon \T(X_2) \to \CC(X_2)$. More precisely, the diagram
$$
\xymatrix{
\CC(X_1)\ar[r]^F  & \CC(X_2)\\
\T(X_1)\ar[r]^{F_{\mathrm T}} \ar[u]^{\LL_1} & \T(X_2)  \ar[u]_{\LL_2}
}
$$
is commutative. 

The following property is then an automatic consequence of the continuity of $F \colon \CC(X_1) \to \CC(X_2)$. 

\begin{prop}
\label{prop:QuasiconformalExtendsBoundary}
Let $f \colon X_1 \to X_2$ be a quasiconformal diffeomorphism between two conformally hyperbolic Riemann surfaces. Then the isometry $F_{\mathrm T} \colon \T(X_1) \to \T(X_2)$ induced by $f$ continuously extends to the Thurston bordifications $\T(X_1) \cup \PML(X_1)$ and $\T(X_2) \cup \PML(X_2)$ of {\upshape\S \ref{subsect:ThurstonBdry}}. \qed
\end{prop}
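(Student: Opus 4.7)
The plan is to show that the homeomorphism $F\colon\CC(X_1)\to\CC(X_2)$ of Lemma~\ref{lem:QuasiConfRespectsUniformWeak*} descends to a homeomorphism $\PCC(X_1)\to\PCC(X_2)$ that carries $\PL_1\big(\T(X_1)\big)\cup\PML(X_1)$ onto $\PL_2\big(\T(X_2)\big)\cup\PML(X_2)$, extending $F_{\mathrm T}$ through the identifications $\PL_1$ and $\PL_2$.

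First I would observe that $F$ is $\R^+$--equivariant: it is defined as the pushforward by a fixed homeomorphism $\widetilde f\colon G(\widetilde X_1)\to G(\widetilde X_2)$, so $F(t\alpha)=tF(\alpha)$ for every $t>0$ and every $\alpha\in\CC(X_1)$. Being a homeomorphism, $F$ sends $0$ to $0$, and therefore induces a homeomorphism $PF\colon\PCC(X_1)\to\PCC(X_2)$ for the quotient uniform weak* topologies. The commutative diagram $F\circ\LL_1=\LL_2\circ F_{\mathrm T}$ projects to $PF\circ\PL_1=\PL_2\circ F_{\mathrm T}$, so $PF$ restricts to a homeomorphism $\PL_1\big(\T(X_1)\big)\to\PL_2\big(\T(X_2)\big)$ conjugating $F_{\mathrm T}$ to itself under the embeddings $\PL_1$, $\PL_2$.

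Next I would check that $F$ sends $\ML(X_1)$ onto $\ML(X_2)$, so that $PF$ maps $\PML(X_1)$ onto $\PML(X_2)$. The quasisymmetric extension $\widetilde f\colon\partial_\infty\widetilde X_1\to\partial_\infty\widetilde X_2$ is an orientation-preserving homeomorphism of circles, and consequently sends any two unlinked pairs of boundary points to unlinked pairs. Thus two geodesics of $\widetilde X_1$ are disjoint if and only if their images under $\widetilde f$ are disjoint in $\widetilde X_2$, and the involution that reverses orientation of geodesics is conjugated by $\widetilde f$ to the corresponding involution in $G(\widetilde X_2)$. Hence $F$ carries balanced currents supported on pairwise non-crossing geodesics to currents of the same kind, which (together with the fact that $F$ preserves boundedness by Lemma~\ref{lem:QuasiConfRespectsUniformWeak*}) shows $F\big(\ML(X_1)\big)=\ML(X_2)$, and therefore $PF\big(\PML(X_1)\big)=\PML(X_2)$.

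Finally I would assemble the extension. By Theorem~\ref{thm:ThurstonBdryMeasureLam}, the Thurston bordification $\T(X_i)\cup\PML(X_i)$ is, via $\PL_i$, the union $\PL_i\big(\T(X_i)\big)\cup\PML(X_i)\subset\PCC(X_i)$, which is the closure of $\PL_i\big(\T(X_i)\big)$. Since $PF$ is a homeomorphism of $\PCC(X_1)$ onto $\PCC(X_2)$ sending $\PL_1\big(\T(X_1)\big)$ onto $\PL_2\big(\T(X_2)\big)$, it sends closures to closures, and by the previous step it sends $\PML(X_1)$ onto $\PML(X_2)$. Thus $PF$ restricts to a homeomorphism of $\PL_1\big(\T(X_1)\big)\cup\PML(X_1)$ onto $\PL_2\big(\T(X_2)\big)\cup\PML(X_2)$ extending $F_{\mathrm T}$; pulling back through $\PL_1$ and $\PL_2$ gives the required continuous extension. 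No step is really an obstacle here, as all the work has already been done; the only point that deserves care is the verification that $F$ preserves the measured-lamination condition, which reduces to the topological fact that a circle homeomorphism preserves the linking of pairs.
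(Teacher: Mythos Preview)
Your proof is correct and follows the same approach as the paper, which simply declares the proposition to be ``an automatic consequence of the continuity of $F \colon \CC(X_1) \to \CC(X_2)$'' (Lemma~\ref{lem:QuasiConfRespectsUniformWeak*}) together with the commutative diagram $F\circ\LL_1=\LL_2\circ F_{\mathrm T}$, and gives no further argument. You have spelled out the details the paper leaves implicit: the $\R^+$--equivariance of $F$, the fact that $F$ preserves the measured-lamination condition, and the identification of the bordification with the closure of the image of $\PL_i$.
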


In particular, we can consider the case where $X_1=X_2$. The \emph{quasiconformal mapping class group} of a conformally hyperbolic Riemann surface $X_0$ is the group
$$
\MCG(X_0) = \{ \text{quasiconformal diffeomorphisms } f \colon X_0 \to X_0 \}/\sim,
$$
where the equivalence relation $\sim$ identifies $f_1$, $f_2 \colon X_0 \to X_0$ when they are isotopic by an isotopy that moves points by a uniformly bounded amount, for the Poincar\'e metric. We refer to the results of \cite{EM} for several equivalent ways of expressing this relation. 

A quasiconformal diffeomorphism $g\colon X_0 \to X$ is a quasi-isometry for the Poincar\'e metrics of $X_0$ and $X$. It follows that, if the quasiconformal diffeomorphisms $f_1$, $f_2 \colon X_0 \to X_0$ are isotopic by an isotopy that moves points by a uniformly bounded amount, so are $g \circ f_1^{-1}$ and $g \circ f_2^{-1} \colon X_0 \to X_0$. As a consequence, if $f_1$, $f_2 \colon X_0 \to X_0$ represent the same element of $\MCG(X_0)$, the maps $F_1$, $F_2 \colon \T(X_0) \to \T(X_0)$ respectively induced by $f_1$ and $f_2$ coincide. This defines an isometric action of the quasiconformal mapping class group $\MCG(X_0)$ on the Teichm\"uller space $\T(X_0)$. 

Proposition~\ref{prop:QuasiconformalExtendsBoundary} immediately implies the following result. 

\begin{cor}
 The action of the quasiconformal mapping class group $\MCG(X_0)$ on the Teichm\"uller space $\T(X_0)$ continuously extends to the Thurston bordification $\T(X_0) \cup \PML(X_0)$ of {\upshape\S \ref{subsect:ThurstonBdry}}.  \qed
\end{cor}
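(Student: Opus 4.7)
My plan is to derive the corollary as an immediate consequence of Proposition~\ref{prop:QuasiconformalExtendsBoundary}, applied in the special case $X_1=X_2=X_0$. For any quasiconformal diffeomorphism $f \colon X_0 \to X_0$, that proposition furnishes a continuous extension $\bar F \colon \T(X_0) \cup \PML(X_0) \to \T(X_0) \cup \PML(X_0)$ of the induced isometry $F_{\mathrm T}$. What remains is to check that the assignment $f \mapsto \bar F$ factors through the equivalence relation defining $\MCG(X_0)$ and is compatible with composition.

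First I would verify well-definedness. Suppose $f_1$ and $f_2$ are bounded isotopic, so that they represent the same class in $\MCG(X_0)$. The discussion immediately preceding the corollary already shows that the induced maps $F_{\mathrm T,1}$ and $F_{\mathrm T,2}$ agree on $\T(X_0)$. Now the Thurston bordification $\T(X_0) \cup \PML(X_0)$ is identified via $\PL$ with a subset of $\PCC(X_0)$, which is Hausdorff as a quotient of the metrizable space $\CC(X_0)$ (Lemma~\ref{lem:GeodCurrentSpaceMetrizable}). Moreover, by the very definition of the Thurston boundary in \S \ref{subsect:ThurstonBdry}, the image $\PL\big(\T(X_0)\big)$ is dense in $\PL\big(\T(X_0)\big) \cup \PML(X_0)$. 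Since two continuous maps into a Hausdorff space that coincide on a dense subset must coincide everywhere, we conclude $\bar F_1 = \bar F_2$, so the extension depends only on the class of $f$ in $\MCG(X_0)$.

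Next I would verify multiplicativity. For quasiconformal $f$, $h \colon X_0 \to X_0$, both $\overline{F \circ H}$ and $\bar F \circ \bar H$ are continuous self-maps of $\T(X_0) \cup \PML(X_0)$ that restrict to $F_{\mathrm T} \circ H_{\mathrm T}$ on the dense subset $\T(X_0)$, so they are equal. Applying this to the pair $f$, $f^{-1}$ also shows that each $\bar F$ is a homeomorphism of the bordification with inverse $\overline{F^{-1}}$. The identity map clearly extends to the identity. Combining these observations yields a continuous action of $\MCG(X_0)$ on $\T(X_0) \cup \PML(X_0)$ that extends the given action on $\T(X_0)$, proving the corollary.

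The only potential obstacle is the topological subtlety of extending by continuity from $\T(X_0)$ to its closure; since $\PCC(X_0)$ is Hausdorff this is automatic, and all of the genuine work has already been carried out in Lemma~\ref{lem:QuasiConfRespectsUniformWeak*} and Proposition~\ref{prop:QuasiconformalExtendsBoundary}.
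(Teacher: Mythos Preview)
Your proposal is correct and matches the paper's approach exactly: the paper gives no proof beyond declaring the corollary an immediate consequence of Proposition~\ref{prop:QuasiconformalExtendsBoundary} and appending a \qed. Your added verifications of well-definedness and multiplicativity are more than the paper supplies; the one small slip is that ``quotient of a metrizable space'' does not by itself imply Hausdorff, but you can sidestep the density argument entirely by observing (as the paper notes when defining the Liouville current) that bounded-isotopic maps $f_1$, $f_2$ have lifts with identical boundary extensions $\partial_\infty\widetilde X_0 \to \partial_\infty\widetilde X_0$, hence induce the very same homeomorphism $F$ on $\CC(X_0)$ and therefore the same $\bar F$ on the bordification.
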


\bibliographystyle{alpha}
 \bibliography{ThurstonBoundary}

\end{document}